\newcommand{\klockan}{\the\hours:{\ifnum\minutes<10 0\fi}\the\minutes}
\newcommand{\tid}{\today\ \klockan}
\newcommand{\prtid}{\smash{\raise 10mm \hbox{\LaTeX ed \tid}}}
\renewcommand{\prtid}{}
\makeatletter \pagestyle{headings} \headheight 10pt
\def\sectionmark#1{} 
\def\subsectionmark#1{}
\newcommand{\sectnr}{\ifnum \c@secnumdepth >\z@
                 \thesection.\hskip 1em\relax \fi}
\def\@evenhead{\footnotesize\rm\thepage\hfil\leftmark\hfil\llap{\prtid}}
\def\@oddhead{\footnotesize\rm\rlap{\prtid}\hfil\rightmark\hfil\thepage}
\def\tableofcontents{\section*{Contents} 
 \@starttoc{toc}}
\def\@biblabel#1{#1.}
\let\Thebibliography=\thebibliography
\renewcommand{\thebibliography}[1]{\def\@mkboth##1##2{}\Thebibliography{#1}
\addcontentsline{toc}{section}{References}
\frenchspacing 
\setlength{\@topsep}{0pt}
\setlength{\itemsep}{0pt}%
\setlength{\parskip}{0pt plus 2pt}%
} \makeatother
\def\mdots@{\mathinner.\nonscript\!.%
 \ifx\next,.\else\ifx\next;.\else\ifx\next..\else
 \nonscript\!\mathinner.\fi\fi\fi}
\let\ldots\mdots@
\let\Enumerate=\enumerate
\renewcommand{\enumerate}{\Enumerate%
\setlength{\@topsep}{0pt}
\setlength{\itemsep}{0pt}%
\setlength{\parskip}{0pt plus 1pt}%
\renewcommand{\theenumi}{\textup{(\alph{enumi})}}%
\renewcommand{\labelenumi}{\theenumi}%
}
\let\endEnumerate=\endenumerate
\renewcommand{\endenumerate}{\endEnumerate\unskip}
\def\@seccntformat#1{\csname the#1\endcsname.\quad}
\newcommand{\authortitle}[2]{\author{#1}\title{#2}\markboth{#1}{#2}}
\newcommand{\art}[6]{{\sc #1, \rm #2, \it #3 \bf #4 \rm (#5), \mbox{#6}.}}
\newcommand{\auth}[2]{{#1, #2.}}
\newcommand{\idxauth}[2]{{#1, #2.}}
\newcommand{\artin}[3]{{\sc #1, \rm #2,  in #3.}}
\newcommand{\artprep}[3]{{\sc #1, \rm #2, #3.}}
\newcommand{\arttoappear}[3]{{\sc #1, \rm #2, to appear in \it #3}}
\newcommand{\artnopt}[6]{{\sc #1, \rm #2. \it #3\/ \bf #4 \rm (#5), \mbox{#6}}}
\newcommand{\book}[3]{{\sc #1, \it #2, \rm #3.}}
\newcommand{\AND}{{\rm and }}
\def\cprime{{\mathsurround0pt$'$}}
\newtheoremstyle{descriptive}%
  {\topsep}   
  {\topsep}   
  {\rmfamily} 
  {}          
  {\bfseries} 
  {.}         
  { }         
  {}          
\newtheoremstyle{propositional}%
  {\topsep}   
  {\topsep}   
  {\itshape}  
  {}          
  {\bfseries} 
  {.}         
  { }         
  {}          
\theoremstyle{propositional}
\newtheorem{thm}{Theorem}[section]
\newtheorem{theorem}[thm]{Theorem}  
\newtheorem{prop}[thm]{Proposition}
\newtheorem{lem}[thm]{Lemma}
\newtheorem{lemma}[thm]{Lemma} 
\theoremstyle{descriptive}
\newtheorem{deff}[thm]{Definition}
\newtheorem{remark}[thm]{Remark}
\renewenvironment{proof}[1][\proofname]{\par
  \pushQED{\qed}%
  \normalfont
  \trivlist
  \item[\hskip\labelsep
        \itshape
    #1\@addpunct{.}]\ignorespaces
}{%
  \popQED\endtrivlist\@endpefalse
} \makeatother
\newcommand{\setm}{\setminus}
\renewcommand{\emptyset}{\varnothing}
\def\vint{\mathop{\mathchoice%
          {\setbox0\hbox{$\displaystyle\intop$}\kern 0.22\wd0%
           \vcenter{\hrule width 0.6\wd0}\kern -0.82\wd0}%
          {\setbox0\hbox{$\textstyle\intop$}\kern 0.2\wd0%
           \vcenter{\hrule width 0.6\wd0}\kern -0.8\wd0}%
          {\setbox0\hbox{$\scriptstyle\intop$}\kern 0.2\wd0%
           \vcenter{\hrule width 0.6\wd0}\kern -0.8\wd0}%
          {\setbox0\hbox{$\scriptscriptstyle\intop$}\kern 0.2\wd0%
           \vcenter{\hrule width 0.6\wd0}\kern -0.8\wd0}}%
          \mathopen{}\int}
\newcommand{\Cp}{{C_p}}
\DeclareMathOperator{\capp}{cap}
\newcommand{\cp}{\capp_p}
\DeclareMathOperator{\diam}{diam} 
\DeclareMathOperator{\Lip}{Lip}
\DeclareMathOperator*{\essinf}{ess\,inf}
\DeclareMathOperator*{\esssup}{ess\,sup}
\DeclareMathOperator*{\finelim}{fine\,lim}
\DeclareMathOperator*{\essliminf}{ess\,lim\,inf}
\DeclareMathOperator{\fineint}{fine-int}
\newcommand{\bdry}{\partial}
\newcommand{\bdy}{\bdry}
\newcommand{\loc}{_{\rm loc}}
{\catcode`p =12 \catcode`t =12 \gdef\eeaa#1pt{#1}}      
\def\accentadjtext#1{\setbox0\hbox{$#1$}\kern   
                \expandafter\eeaa\the\fontdimen1\textfont1 \ht0 }
\def\accentadjscript#1{\setbox0\hbox{$#1$}\kern 
                \expandafter\eeaa\the\fontdimen1\scriptfont1 \ht0 }
\def\accentadjscriptscript#1{\setbox0\hbox{$#1$}\kern   
                \expandafter\eeaa\the\fontdimen1\scriptscriptfont1 \ht0 }
\def\accentadjtextback#1{\setbox0\hbox{$#1$}\kern       
                -\expandafter\eeaa\the\fontdimen1\textfont1 \ht0 }
\def\accentadjscriptback#1{\setbox0\hbox{$#1$}\kern     
                -\expandafter\eeaa\the\fontdimen1\scriptfont1 \ht0 }
\def\accentadjscriptscriptback#1{\setbox0\hbox{$#1$}\kern 
                -\expandafter\eeaa\the\fontdimen1\scriptscriptfont1 \ht0 }
\def\itoverline#1{{\mathsurround0pt\mathchoice
        {\rlap{$\accentadjtext{\displaystyle #1}
                \accentadjtext{\vrule height1.593pt}
                \overline{\phantom{\displaystyle #1}
                \accentadjtextback{\displaystyle #1}}$}{#1}}
        {\rlap{$\accentadjtext{\textstyle #1}
                \accentadjtext{\vrule height1.593pt}
                \overline{\phantom{\textstyle #1}
                \accentadjtextback{\textstyle #1}}$}{#1}}
        {\rlap{$\accentadjscript{\scriptstyle #1}
                \accentadjscript{\vrule height1.593pt}
                \overline{\phantom{\scriptstyle #1}
                \accentadjscriptback{\scriptstyle #1}}$}{#1}}
        {\rlap{$\accentadjscriptscript{\scriptscriptstyle #1}
                \accentadjscriptscript{\vrule height1.593pt}
                \overline{\phantom{\scriptscriptstyle #1}
                \accentadjscriptscriptback{\scriptscriptstyle #1}}$}{#1}}}}
\newcommand{\dmu}{d\mu}
\newcommand{\de}{\delta}
\newcommand{\eps}{\varepsilon}
\newcommand{\la}{\lambda}
\newcommand{\ga}{\gamma}
\newcommand{\s}{\sigma}
\newcommand{\Om}{\Omega}
\renewcommand{\phi}{\varphi}
\newcommand{\p}{{$p\mspace{1mu}$}}
\newcommand{\clEp}{{\itoverline{E}\mspace{1mu}}^p}
\newcommand{\clGjp}{\itoverline{G}_j^{\mspace{2mu}p}}
\newcommand{\clGkp}{\itoverline{G}_k^{\mspace{2mu}p}}
\newcommand{\R}{\mathbf{R}}
\newcommand{\eR}{{\overline{\R}}}
\newcommand{\T}{{\cal T}}
\newcommand{\K}{{\cal K}}
\newcommand{\oHpind}[1]{H_{#1}}     
\newcommand{\limplus}{{\mathchoice{\raise.17ex\hbox{$\scriptstyle +$}}
		{\raise.17ex\hbox{$\scriptstyle +$}}
		{\raise.1ex\hbox{$\scriptscriptstyle +$}}
		{\scriptscriptstyle +}}}
\newcommand{\Np}{N^{1,p}}
\newcommand{\Nploc}{N^{1,p}\loc}
\newcommand{\Ct}{\widetilde{C}}
\newcommand{\setcurrentlabel}[1]{\def\@currentlabel{#1}}
\numberwithin{equation}{section}
\newcommand{\Ga}{\Gamma}
\newcommand{\Lploc}{L^p\loc}
\newcommand{\eqv}{\ensuremath{
\mathchoice{\quad \Longleftrightarrow \quad}{\Leftrightarrow}
                 {\Leftrightarrow}{\Leftrightarrow}} }
\newcommand{\imp}{\ensuremath{\Rightarrow} }
\newenvironment{ack}{\medskip{\it Acknowledgement.}}{}
\begin{document}

\authortitle{Anders Bj\"orn, Jana Bj\"orn and Visa Latvala}
{The weak Cartan property for the \p-fine topology on metric spaces}
\author{
Anders Bj\"orn \\
\it\small Department of Mathematics, Link\"opings universitet, \\
\it\small SE-581 83 Link\"oping, Sweden\/{\rm ;}
\it \small anders.bjorn@liu.se
\\
\\
Jana Bj\"orn \\
\it\small Department of Mathematics, Link\"opings universitet, \\
\it\small SE-581 83 Link\"oping, Sweden\/{\rm ;}
\it \small jana.bjorn@liu.se
\\
\\
Visa Latvala \\
\it\small Department of Physics and Mathematics,
University of Eastern Finland, \\
\it\small P.O. Box 111, FI-80101 Joensuu,
Finland\/{\rm ;} 
\it \small visa.latvala@uef.fi
}

\date{}

\maketitle

\noindent{\small
 {\bf Abstract}.
We study the \p-fine topology on
complete metric spaces  equipped with a doubling measure
supporting a \p-Poincar\'e inequality, $1 < p< \infty$.
We establish a weak Cartan property, which
yields characterizations of the \p-thinness and the \p-fine continuity, and
allows us to show that the \p-fine topology is the coarsest topology making
all \p-superharmonic functions continuous.
Our \p-harmonic and superharmonic functions are defined by means of
scalar-valued upper gradients and do not rely on a vector-valued
differentiable structure.}

\bigskip

\noindent {\small \emph{Key words and phrases}:
capacity, coarsest topology,
doubling, fine topology, finely continuous,
metric space, \p-harmonic, Poincar\'e inequality,
quasicontinuous, superharmonic,
thick, thin, weak Cartan property,
Wiener criterion.
}

\medskip

\noindent {\small Mathematics Subject Classification (2010):
Primary: 31E05; Secondary: 30L99, 31C40, 31C45, 35J92, 49Q20.}

\section{Introduction}

The aim of this paper is to
study the \p-fine topology and the fine potential theory
associated with \p-harmonic functions on a
complete metric space $X$ equipped with a doubling measure $\mu$
supporting a \p-Poincar\'e inequality, $1 < p< \infty$.

Nonlinear potential theory associated with \p-harmonic functions
has been studied since the 1960s.
For extensive treatises and notes on the history,
see the monographs Adams--Hedberg~\cite{AdHe} and
Heinonen--Kilpel\"ainen--Martio~\cite{HeKiMa}, the latter developing
the theory on weighted $\R^n$
(with respect to \p-admissible weights).
Starting in the 1990s a lot of attention
has been given to analysis on metric spaces,
see e.g.\ Haj\l asz~\cite{Haj-PA}, \cite{Haj03},
Haj\l asz--Koskela~\cite{HaKo}, Heinonen~\cite{heinonen}, \cite{hei-BAMS},
and Heinonen--Koskela~\cite{HeKo98}.
Around 2000 this initiated 
studies of \p-harmonic and \p-superharmonic functions on
metric spaces without a differentiable structure, by e.g.\ Shanmugalingam~\cite{Sh-harm},
Kinnunen--Martio~\cite{KiMa02}, Kinnunen--Shanmugalingam~\cite{KiSh01},
Bj\"orn--MacManus--Shanmugalingam~\cite{BMS}
and  Bj\"orn--Bj\"orn--Shanmugalingam~\cite{BBS}, \cite{BBS2}.
The theory has later been further developed by these and other authors,
see the monograph Bj\"orn--Bj\"orn~\cite{BBbook} and the references therein.

While \p-harmonic functions are known to be locally H\"older continuous
(even on metric spaces, see~\cite{KiSh01}),
\p-superharmonic functions are in general only lower semicontinuous.
However, at points of  discontinuity they still exhibit more
regularity than just lower semicontinuity, namely, the limit
$\lim u(x)$, as $x\to x_0$, exists along a substantial (in a capacitary
sense) part of $x_0$'s neighbourhood and equals $u(x_0)$.
The topology giving rise to such neighbourhoods and
limits is called the \p-\emph{fine topology}.
Together with the associated fine \emph{potential theory}
it goes back to Cartan in the 1940s
in the linear case $p=2$, which has been later systematically studied, 
see e.g.\ Fuglede~\cite{Fugl71}, \cite{Fug} and 
Luke\v{s}--Mal\'y--Zaj\'i\v{c}ek~\cite{LuMaZa}.

The nonlinear fine potential theory started in 
the 1970s, with papers by e.g.\ Maz\cprime ya~\cite{Maz70}, 
Maz\cprime ya--Havin~\cite{MazHa70}, \cite{MazHa72}, 
Hedberg~\cite{Hedb}, Adams--Meyers~\cite{AdMey}, Meyers~\cite{Mey75},
Hedberg--Wolff~\cite{HedWol}, Adams--Lewis~\cite{AdLew}
and Lindqvist--Martio~\cite{Lind-Mar}.
See also the notes to Chapter~12 in
Heinonen--Kilpel\"ainen--Martio~\cite{HeKiMa} and Section~2.6 in
Mal\'y--Ziemer~\cite{MZ}.
In the 1990s the fine potential theory associated
with \p-harmonic functions
was developed further in
Heinonen--Kilpel\"ainen--Martio~\cite{HeKiMa89},
Kilpel\"ainen--Mal\'y~\cite{KiMa92}, \cite{KiMa},
Latvala~\cite{LatPhD}, \cite{Lat97}, \cite{Lat00},
and the monograph Mal\'y--Ziemer~\cite{MZ} for unweighted $\R^n$.
The monograph \cite{HeKiMa}
is the main source for fine potential theory on weighted $\R^n$
(note that Chapter~21, which is only in the second addition, contains
some more recent results). See
also Mikkonen~\cite{Mikkonen} for related results (in weighted $\R^n$)
on the Wolff
potential.
In fact, the Wolff potential appeared already
in  Maz\cprime ya--Havin~\cite{MazHa72}.

The fine potential theory in metric spaces is more recent, starting with
Kinnunen--Latvala~\cite{KiLa}, J.~Bj\"orn~\cite{JB-pfine} and
Korte~\cite{korte08}, where it was shown that \p-superharmonic functions on
open subsets of metric spaces are \p-finely continuous.
There are also some related more recent results
in Bj\"orn--Bj\"orn~\cite{BBnonopen} and~\cite{BBvarcap}.
As in the classical situation, the \p-fine topology on metric spaces is
defined by means of \p-capacity and \p-thin sets, see Section~\ref{sect-fine-cont}.

From now on we drop the $p$ from the notation and just write
e.g.\ fine and superharmonic even though the notions depend
on $p$. Our first main result complements the results in
\cite{JB-pfine}, \cite{HeKiMa89}, \cite{KiLa}
and \cite{korte08} as follows.

\begin{thm} \label{thm-coarsest-intro}
The fine topology is the coarsest topology making all
superharmonic functions on open subsets of $X$ continuous.
\end{thm}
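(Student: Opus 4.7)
The plan is to prove the two inclusions separately. The first (easier) direction, that every superharmonic function on an open subset of $X$ is continuous with respect to the fine topology, follows from the cited prior results of Kinnunen--Latvala~\cite{KiLa}, J.~Bj\"orn~\cite{JB-pfine} and Korte~\cite{korte08}: superharmonic functions on open subsets of $X$ are finely continuous. Hence the fine topology is one topology making all superharmonic functions continuous, and the substance of the theorem is that it is the coarsest such.

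For the coarsest direction, let $\tau$ be any topology on $X$ for which every superharmonic function $u$ on every open $U\subset X$ is continuous as a map $(U,\tau|_U)\to\eR$. I aim to show that every finely open set $V$ is $\tau$-open. Fix $x_0\in V$; then $E:=X\setminus V$ is thin at $x_0$. By the weak Cartan property established earlier in the paper, there exist finitely many superharmonic functions $u_1,\dots,u_N$ on an open neighbourhood $G$ of $x_0$ such that $u:=\min_i u_i$, which is again superharmonic on $G$ and hence $\tau|_G$-continuous, satisfies
\[
u(x_0)<\liminf_{E\ni x\to x_0}u(x).
\]
Choosing $c\in\R$ strictly between these two values, the set $W:=\{x\in G:u(x)<c\}$ is $\tau|_G$-open and contains $x_0$, and by the choice of $c$ there is a metric ball $B\subset G$ about $x_0$ on which $u\ge c$ throughout $E\setminus\{x_0\}$, so $W\cap B\subset V$.

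The remaining step, which I expect to be the main obstacle, is to upgrade the local openness of $W\cap B$ in $G$ to $\tau$-openness in $X$. For this it suffices to show that $\tau$ refines the metric topology. The plan is to exhibit, for each $x_0\in X$ and each $r>0$, a superharmonic function $v$ on $X$ (or on a sufficiently large open set) with $v(x_0)$ strictly greater than $\sup\{v(x):d(x,x_0)\ge r\}$; a suitable Wolff or capacitary potential of a measure concentrated near $x_0$ should have this property under the standing doubling and Poincar\'e assumptions. The $\tau$-continuity of $v$ then turns $\{v>M\}$, for $M$ just below $v(x_0)$, into a $\tau$-open neighbourhood of $x_0$ contained in $B(x_0,r)$. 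Combined with the preceding paragraph, this yields that $W\cap B$, and hence $V$, is $\tau$-open, completing the proof.
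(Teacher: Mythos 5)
There is a genuine gap, and it sits exactly where you invoke the weak Cartan property. Theorem~\ref{thm-weak-Cartan} (equivalently, condition \ref{j-k} of Theorem~\ref{thm-j}) gives the separation $v(x_0)<\liminf_{E\ni x\to x_0}v(x)$ for the \emph{maximum} $v=\max\{u,u'\}$ of the two capacitary potentials, not for their minimum. Your $u:=\min_i u_i$ is indeed superharmonic, but it need not satisfy the displayed inequality: the conclusion of the weak Cartan property is only that $E\cap B\subset F\cup F'$, so at a given point of $E$ one of the two potentials equals $1$ while the other may be close to $0$, and hence $\liminf_{E\ni x\to x_0}\min\{u,u'\}(x)$ can be $0$, not above $\min\{u(x_0),u'(x_0)\}$. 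Producing a \emph{single} superharmonic function with this separation is precisely the (strong) Cartan property, which in this setting is only known at points of positive capacity (Proposition~\ref{prop-char-thin-cap-pos}) and is otherwise open. The paper sidesteps this by never combining the functions into one: by Theorem~\ref{thm-coarsest} one has $x_0\in\{x\in B:u_1(x)<c_1\}\cap\{x\in B:u_2(x)<c_2\}\subset U$, each sublevel set is $\T$-open because each $u_j$ separately is continuous for $\T$, and the finite intersection is then $\T$-open without the max (or min) ever having to be superharmonic.

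Your plan for the remaining step --- forcing $\tau$ to refine the metric topology by exhibiting a superharmonic function on all of $X$ peaking at $x_0$ --- also cannot be carried out in this generality: when $X$ is bounded the only superharmonic functions on $X$ are constants, and the extension theorem that makes the global formulation work on $\R^n$ is unavailable here; this is exactly why the paper proves the local Theorem~\ref{thm-coarsest} instead of the classical global statement. In the paper's reading of the hypothesis, a topology makes $u_j$ continuous when preimages of open subsets of $\eR$ under $u_j$ belong to it; since $u_j^{-1}((-\infty,c_j))=\{x\in B:u_j(x)<c_j\}$, these level sets are open in $\T$ outright, and no passage from relative openness in $B$ to openness in $X$ is required.
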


The superharmonic functions considered in this  and most of the earlier papers
on metric spaces are
defined through upper gradients (see later sections for
precise definitions), which in particular means
that we have \emph{no equation}, only  variational inequalities,
to work with.
In this way the results do not depend on any differentiable structure
of the metric space.

The proofs of our main results are based on pointwise estimates of
capacitary potentials. These estimates lead in a natural way to a
central property which we call the \emph{weak Cartan property},
see Theorem~\ref{thm-weak-Cartan}.
The following consequence is a slight reformulation and extension
of the weak Cartan property.

\begin{thm} \label{thm-j}
Let $E \subset X$ be an arbitrary set, and
let $x_0 \in \itoverline{E} \setm E$.
Then the following are equivalent\/\textup{:}
\begin{enumerate}
\item \label{j-b}
$E$ is thin at $x_0$\textup{;}
\item \label{j-a}
$x_0 \notin \clEp$, where $\clEp$ is the fine closure of $E$\textup{;}
\item \label{j-aa}
$X \setm E$ is a fine neighbourhood of $x_0$\textup{;}
\item \label{j-k}
there are $k\ge 2$ superharmonic functions $u_1,\ldots, u_k$
in an open neighbourhood
of $x_0$ such that the function $v=\max\{u_1,\ldots,u_k\}$ satisfies
\begin{equation}  \label{eq-Cartan}
     v(x_0) < \liminf_{E \ni x \to x_0} v(x) \textup{;}
\end{equation}
\item \label{j-c}
condition \ref{j-k} holds with $k=2$ nonnegative bounded superharmonic
functions. 
\end{enumerate}
\end{thm}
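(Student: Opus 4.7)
The plan is to combine a purely topological equivalence, the known fine continuity of superharmonic functions, and the weak Cartan property stated in Theorem~\ref{thm-weak-Cartan}. The only deep ingredient is the implication \ref{j-b}$\Rightarrow$\ref{j-c}; everything else should reduce to definitions or to results from the earlier papers~\cite{JB-pfine}, \cite{KiLa} and \cite{korte08}.

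First I would dispose of \ref{j-b}$\Leftrightarrow$\ref{j-a}$\Leftrightarrow$\ref{j-aa} topologically. Recall the standard description of the fine closure $\clEp = E \cup \{x\in X : E \text{ is not thin at } x\}$. Since $x_0 \in \itoverline{E}\setm E$, the point $x_0$ lies outside $\clEp$ precisely when $E$ is thin at $x_0$, giving \ref{j-a}$\Leftrightarrow$\ref{j-b}. For \ref{j-a}$\Leftrightarrow$\ref{j-aa}, note that $X\setm\clEp$ is the largest fine-open subset of $X\setm E$, so $X\setm E$ is a fine neighbourhood of $x_0$ if and only if $x_0\notin\clEp$.

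Next, \ref{j-c}$\Rightarrow$\ref{j-k} is trivial, and \ref{j-k}$\Rightarrow$\ref{j-b} follows from fine continuity. Each $u_i$ is fine continuous at $x_0$, and since the maximum of finitely many fine continuous functions is fine continuous, so is $v=\max\{u_1,\ldots,u_k\}$. Pick $\alpha$ with $v(x_0)<\alpha<\liminf_{E\ni x\to x_0}v(x)$ and $r>0$ such that $v(x)>\alpha$ for every $x\in E\cap B(x_0,r)\setm\{x_0\}$. By fine continuity the set $\{v<\alpha\}$ is a fine neighbourhood of $x_0$, hence $\{v\ge\alpha\}$ is thin at $x_0$. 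Therefore $E\cap B(x_0,r)$ is thin at $x_0$, and by the local character of thinness so is $E$.

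The remaining implication \ref{j-b}$\Rightarrow$\ref{j-c} is the content of the weak Cartan property: if $E$ is thin at $x_0$, one must produce two nonnegative bounded superharmonic functions $u_1,u_2$ on a neighbourhood of $x_0$ whose maximum satisfies~\eqref{eq-Cartan}. This is the main obstacle; the construction is expected to proceed through pointwise estimates of capacitary potentials of the dyadic pieces $E\cap(B(x_0,2^{-j})\setm B(x_0,2^{-j-1}))$ summed separately over even and odd indices $j$, the Wiener-type convergence furnished by thinness guaranteeing boundedness and superharmonicity of each partial sum. Taking the maximum of the two resulting functions, rather than a single infinite sum, reflects the weaker comparison and truncation principles available in the metric setting in the absence of a PDE formulation.
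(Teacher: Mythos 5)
Your proposal is correct and follows essentially the same route as the paper: the equivalence of \ref{j-b}, \ref{j-a} and \ref{j-aa} via the description $\clEp=E\cup b_p(E)$ of the fine closure, the implication \ref{j-b}$\Rightarrow$\ref{j-c} by direct appeal to the weak Cartan property (Theorem~\ref{thm-weak-Cartan}), and \ref{j-k}$\Rightarrow$\ref{j-b} from the fine continuity of superharmonic functions. (Your closing sketch of how the weak Cartan property itself is proved is only heuristic --- the paper interleaves annuli at two shifted scales $\sigma^{-j}r$ and $\sigma^{-j}r/5$ with $\sigma=50\lambda$ rather than splitting into even and odd indices --- but since that theorem is invoked as a black box, this does not affect the proof of the present statement.)
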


Here and elsewhere, a set $U$  is a \emph{fine neighbourhood}
of a point $x_0$ if it contains
a  finely open set $V \ni x_0$; it is not
required that $U$ itself is finely open.
Note also that if $x_0\in E$, then $E$ is thin at $x_0$
if and only if $\Cp(\{x_0\})=0$ and $E \setm \{x_0\}$ is thin at $x_0$.
This is a consequence of the following generalization of
Theorem~6.33 in Heinonen--Kilpel\"ainen--Martio~\cite{HeKiMa}.

\begin{prop} \label{prop-pt-thick}
If $\Cp(\{x_0\})>0$, then $\{x_0\}$ is thick at $x_0$.
\end{prop}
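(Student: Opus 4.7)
The plan is to prove the contrapositive: if $\{x_0\}$ is thin at~$x_0$, then $\Cp(\{x_0\})=0$. By definition (see Section~\ref{sect-fine-cont}), thinness at~$x_0$ amounts to
\[
\int_0^1 \left(\frac{\cp(\{x_0\}, B(x_0, 2r))}{\cp(\overline{B}(x_0, r), B(x_0, 2r))}\right)^{1/(p-1)} \frac{dr}{r} < \infty,
\]
which via dyadic decomposition is equivalent to summability of the corresponding series along $r_k=2^{-k}$. Since monotonicity of capacity forces the ratio to be at most~$1$, thinness is a genuine smallness condition along a subsequence.

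For each~$k$ I would pick a near-optimal admissible pair $(u_k, g_{u_k})$ for $\cp(\{x_0\}, B(x_0, 2^{-k+1}))$ with $u_k(x_0)=1$, $u_k=0$ outside $B(x_0, 2^{-k+1})$, and $\int g_{u_k}^p\,d\mu \le 2\cp(\{x_0\}, B(x_0, 2^{-k+1}))$. The Poincar\'e inequality applied to $u_k\in\Np_0(B(x_0, 2^{-k+1}))$ gives $\int u_k^p\,d\mu \le C 2^{-kp}\int g_{u_k}^p\,d\mu$, so $\|u_k\|_{\Np}^p$ is comparable to the variational capacity itself. The standard comparability $\cp(\overline{B}(x_0, 2^{-k}), B(x_0, 2^{-k+1})) \simeq \mu(B(x_0, 2^{-k}))/2^{-kp}$ coming from doubling and \p-Poincar\'e then lets me translate the Wiener series into a weighted series in $\int g_{u_k}^p\,d\mu$ and $\mu(B(x_0, 2^{-k}))$.

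I would then construct a master function $u$ from the $u_k$'s---either as a pointwise supremum $\sup_{k\ge N} u_k$ or as a weighted sum $\sum_{k\ge N} a_k u_k$ with $\sum a_k \ge 1$---so that $u(x_0)\ge 1$ automatically. Summing the dyadic contributions to $\|u\|_{\Np}^p$ via H\"older's inequality against the Wiener-summable weights (where the exponent $1/(p-1)$ is essential, being the conjugate exponent to the Wiener index) yields $\|u\|_{\Np}^p \le \eps_N$ with $\eps_N\to 0$ as $N\to\infty$. Since $u$ is admissible for the Sobolev capacity of $\{x_0\}$, this forces $\Cp(\{x_0\}) \le \eps_N \to 0$, contradicting the hypothesis $\Cp(\{x_0\})>0$.

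The main obstacle is the upper-gradient estimate for the master function: the combinatorics of nested dyadic annuli, the nonlinear exponent~$1/(p-1)$, and the need to apply H\"older's inequality with weights coming from the Wiener summability must all be combined carefully. This is the standard delicate ingredient in Wiener-type constructions of capacitary test functions, and the precise form of the Wiener convergence is used essentially to make the telescoping series converge.
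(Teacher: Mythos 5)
There is a genuine gap here: the energy estimate you aim for cannot hold, so the route through a ``master test function'' for $\Cp(\{x_0\})$ cannot close. By Lemma~\ref{lemma:capcomparison}, every $u_k$ admissible for $\cp(\{x_0\},B(x_0,2^{-k+1}))$ is (after extension by zero) admissible for $\Cp(\{x_0\})$, and its energy satisfies $\int_X g_{u_k}^p\,d\mu\ge \cp(\{x_0\},B(x_0,2^{-k+1}))\ge \Cp(\{x_0\})/C$ with $C$ independent of $k$. Consequently any weighted sum $\sum_k a_ku_k$ with $\sum_k a_k\ge1$, and likewise any supremum $\sup_{k\ge N}u_k$, is itself admissible for $\Cp(\{x_0\})$ and hence has $\|u\|_{\Np(X)}^p\ge\Cp(\{x_0\})$ by the very definition of the Sobolev capacity. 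So the conclusion $\|u\|_{\Np(X)}^p\le\eps_N\to0$ is unreachable while $\Cp(\{x_0\})>0$, no matter how the H\"older/telescoping step is organized. The underlying reason is that thinness of $\{x_0\}$ controls the \emph{ratios} $\cp(\{x_0\},2B_j)/\cp(B_j,2B_j)$ and not the numerators: when $\Cp(\{x_0\})>0$ both numerator and denominator typically tend to infinity (in unweighted $\R^n$ with $p>n$ both are comparable to $2^{j(p-n)}$, and the ratio to a positive constant), so no smallness of the individual energies $\int_Xg_{u_k}^p\,d\mu$ can be extracted from the Wiener sum.

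The missing ingredient is pointwise rather than variational. One needs the boundary estimate for the capacitary potential $u_j$ of $\{x_0\}$ in $B_j$, namely $\sup_{\bdry B_{j+1}}u_j\le C'\bigl(\cp(\{x_0\},B_j)/\cp(B_{j+1},B_j)\bigr)^{1/(p-1)}$ (Lemma~\ref{lem-est-pot-bdryB}, proved via the weak Harnack inequalities), which is what converts the Wiener ratio into decay of the potential. Iterating it (Proposition~\ref{prop-u(x0)-inf-product}) yields $u(x_0)\le 1-\prod_{j}(1-a_j)$ for the capacitary potential $u$ of $\{x_0\}$ in $B$, and thinness makes the product positive, whence $u(x_0)<1$; this contradicts $u(x_0)=1$, which holds because the obstacle constraint $u\ge\chi_{\{x_0\}}$ q.e.\ is seen at the point $x_0$ itself precisely when $\Cp(\{x_0\})>0$. (A purely energy-based route does exist, but it needs the two-sided comparison of $\cp(\{x_0\},B)$ with $\bigl(\sum_j\cp(B_{j+1},B_j)^{-1/(p-1)}\bigr)^{-(p-1)}$ from Proposition~8.3 of~\cite{ringcap}, whose lower-bound half is again not obtainable by the H\"older computation you describe.)
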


Note that the converse statement is trivially true.
At points with positive capacity we further improve Theorem~\ref{thm-j}
and obtain the usual Cartan property (with $k=1$),
see Proposition~\ref{prop-char-thin-cap-pos}.
(Note that in weighted $\R^n$ and in metric spaces it can happen that some
points have positive capacity while others do not.
A sharp condition for when $\Cp(\{x_0\})>0$ is given
in Proposition~8.3 in Bj\"orn--Bj\"orn--Lehrb\"ack~\cite{ringcap}.)
Proposition~\ref{prop-char-thin-cap-pos} also shows
 that $E$ is thin at $x_0 \in \itoverline{E} \setm E$
with $\Cp(\{x_0\})>0$ if and only if the seemingly weaker condition
\[
\lim_{\rho\to0} \Cp(E\cap B(x_0,\rho))=0
\]
holds.
This characterization fails for points with zero capacity.

The classical \emph{Cartan property} says that if $E\subset\R^n$ is thin at
$x_0\in \itoverline{E}\setm E$, then for every $r>0$ there is a nonnegative
bounded superharmonic function $u$ on $B(x_0,r)$ such that
\[
u(x_0)<\liminf_{E\ni x\to x_0} u(x),
\]
see Theorem~1.3 in Kilpel\"ainen--Mal\'y~\cite{KiMa}
or
Theorem~2.130 in Mal\'y--Ziemer~\cite{MZ} for
the nonlinear case on unweighted $\R^n$,
and Theorem~21.26 in Heinonen--Kilpel\"ainen--Martio~\cite{HeKiMa}
(only in the second edition) for weighted $\R^n$.
In the generality of this paper,
for superharmonic functions defined through upper
gradients on metric spaces, 
it is not known whether the classical Cartan property (with $k=1$)
holds, since its proof is based on the equation rather than on the
minimization problem.
Using variational methods, we have only been able to prove it for points
with positive capacity in Proposition~\ref{prop-char-thin-cap-pos}.
However, the weak Cartan property
 provides us with two superharmonic
functions whose maximum in many situations can be
used instead of the usual Cartan property
(but not always, since the maximum
need not be superharmonic).
In particular Theorem~\ref{thm-coarsest-intro} follows quite
easily.

The (strong) Cartan property is closely related to the necessity part of the Wiener criterion, as it provides a superharmonic function which is not continuous at $x_0$, and can thus be used to obtain a   \p-harmonic function which does not attain its continuous boundary values at $x_0$.
The weak Cartan property only leads to the necessity part of the Wiener criterion for certain domains, see Remark~\ref{rem-Wolff}.
Due to the lack of equation, the necessity part of the Wiener criterion for general domains in metric spaces is not known for \p-harmonic functions defined by means of upper gradients, while for Cheeger \p-harmonic functions based on a vector-valued differentiable structure it was proved in J.~Bj\"orn~\cite{JB-Matsue}. 
The sufficiency part of the Wiener criterion in metric spaces was proved in 
Bj\"orn--MacManus--Shanmugalingam~\cite{BMS} and J.~Bj\"orn~\cite{JB-pfine}.
In Euclidean spaces, the Wiener criterion was obtained in  Maz\cprime ya~\cite{Maz70}, Lindqvist--Martio~\cite{Lind-Mar}, Heinonen--Kilpel\"ainen--Martio~\cite{HeKiMa}, Kilpel\"ainen--Mal\'y~\cite{KiMa92} and Mikkonen~\cite{Mikkonen}.

The outline of the paper is as follows:
In Sections~\ref{sect-prelim} and~\ref{sect-superharm}
we introduce the necessary background on metric spaces, upper gradients,
Newtonian spaces, capacity and superharmonic functions.
In Section~\ref{sect-fine-cont} we introduce the fine topology, cite
the necessary background results, and establish a number of auxiliary results
not requiring the weak Cartan property nor the capacitary estimates
used to establish it.
We also conclude the following generalization
of a result by J.~Bj\"orn~\cite{JB-pfine} and Korte~\cite{korte08},
who (independently) established the result corresponding to (b)
for open sets $U$, see Theorem~\ref{thm-qcont-fine}.

\begin{thm}\label{thm-fine-quasicont-fine}
{\rm(a)} Any quasiopen set $U\subset X$ can be written as
\(U=V\cup E\),
where $V$ is finely open and $\Cp(E)=0$.

{\rm(b)} Let $u$ be a quasicontinuous function on a quasiopen or 
finely open set $U$.
Then $u$ is finely continuous q.e.\ in $U$.
\end{thm}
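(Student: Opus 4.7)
\emph{Plan for part (a).} Set $V$ to be the fine interior of $U$, i.e.\ the union of all finely open subsets of $U$; this is automatically finely open, contained in $U$, and writing $F:=U\setm V$ we need to show $\Cp(F)=0$. By quasiopenness, choose for each $j$ an open set $G_j$ with $\Cp(G_j)<2^{-j}$ and such that $U\cup G_j$ is open. If $x\in U$ and $G_j$ is thin at $x$ for some $j$, then, since an open set is never thin at its own points, $x\notin G_j$, while $X\setm G_j$ is a fine neighbourhood of $x$; intersecting with the open (hence fine) neighbourhood $U\cup G_j$ of $x$ yields the fine neighbourhood
\[
(U\cup G_j)\cap(X\setm G_j)=U\setm G_j\subset U
\]
of $x$, so $x\in V$. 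Hence $F\subset\bigcap_j\{x\in X:G_j\text{ is not thin at }x\}$, and the right-hand side has capacity zero by the standard capacitary estimate bounding the base $\{G_j\text{ is not thin}\}$ of $G_j$ in terms of $\Cp(G_j)$, together with $\Cp(G_j)\to 0$. This background should be among the results collected in Section~\ref{sect-fine-cont}.

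\emph{Plan for part (b).} It suffices to treat the finely open case: if $U$ is quasiopen, part~(a) decomposes $U$ as a finely open part $V\subset U$ plus a capacity-zero set, and $u|_V$ remains quasicontinuous, so fine continuity q.e.\ on $V$ gives fine continuity q.e.\ on $U$. So assume $U$ is finely open, and by quasicontinuity choose open $G_j$ with $\Cp(G_j)<2^{-j}$ such that $u|_{U\setm G_j}$ is continuous. Exactly as in~(a), for q.e.\ $x_0\in U$ some $G_j$ is thin at $x_0$, so $x_0\in U\setm G_j$ and $X\setm G_j$ is a fine neighbourhood of $x_0$; intersecting with the fine neighbourhood $U$ of $x_0$ shows that $U\setm G_j$ is a fine neighbourhood of $x_0$ on which $u$ is continuous. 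Given $\eps>0$, this continuity produces $r>0$ with $|u(y)-u(x_0)|<\eps$ for $y\in(U\setm G_j)\cap B(x_0,r)$, and the latter set is a fine neighbourhood of $x_0$, which proves fine continuity of $u$ at $x_0$.

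\emph{Main obstacle.} The essential input in both parts is a Kellogg-type capacitary estimate bounding the base of $G_j$ in terms of $\Cp(G_j)$; equivalently, that for q.e.\ $x\in X$ and all sufficiently small open $G_j$, $G_j$ is thin at $x$. In Euclidean and weighted~$\R^n$ this is classical, and in the present setting I expect it to be available as background in Section~\ref{sect-fine-cont}, or derivable from the sufficiency part of the Wiener criterion of \cite{BMS} and \cite{JB-pfine}. Without such an estimate the set-theoretic decomposition used above collapses (one only gets $x\notin G_j$ eventually, not $G_j$ thin at $x$), making this the only nontrivial ingredient beyond the otherwise routine manipulations of open, finely open and quasiopen sets.
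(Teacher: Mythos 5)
Your proof is correct and essentially coincides with the paper's: the capacitary input you flag as the main obstacle is supplied by Lemma~\ref{lemma-capfineclosure}, which gives $\Cp(\clGjp)=\Cp(G_j)<2^{-j}$ and hence $\Cp\bigl(\bigcap_{j} b_p(G_j)\bigr)=0$ because $b_p(G_j)\subset\clGjp$. Your finely open neighbourhoods $U\setm G_j$ of the points where some $G_j$ is thin are exactly the paper's sets $V_j=(U\cup G_j)\setm\clGjp$, and part (b) is handled identically (reduce to the finely open case, then use continuity of $u|_{U\setm G_j}$ on a fine neighbourhood).
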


A fundamental step in the proof is the fact that the
capacity of a set coincides with the capacity of its fine closure,
see Lemma~\ref{lemma-capfineclosure} which generalizes Corollary~4.5
in J.~Bj\"orn~\cite{JB-pfine}.

Section~\ref{sect-weak-Cartan} is devoted to
the proof of the weak Cartan property (Theorem~\ref{thm-weak-Cartan}).
Also Theorem~\ref{thm-j} is established.
In the last section, Section~\ref{sect-coarsest},
we draw a number of consequences of the weak Cartan property,
including Theorem~\ref{thm-coarsest-intro}
and Proposition~\ref{prop-pt-thick},
and end the paper by proving the following characterization of fine
continuity, which as pointed out in
Mal\'y--Ziemer~\cite{MZ} is by no means trivial.

\begin{thm}\label{thm-char-fine-cont}
Let $u$ be a function on a fine neighbourhood $U$ of $x_0$.
Then the following conditions are equivalent\/\textup{:}
 \begin{enumerate}
 \item \label{it-i}
 $u$ is finely continuous at $x_0$\/\textup{;}
 \item \label{it-ii}
the set $\{x\in U:|u(x)-u(x_0)|\ge\varepsilon\}$
is thin at $x$ for each $\varepsilon>0$\/\textup{;}
 \item \label{it-iii} there exists a set $E$ which is thin at $x_0$
such that
 $$
    u(x_0)=\lim_{U\setminus E \ni x\to x_0} u(x),
 $$
 where the limit is taken with respect to the metric topology.
 \end{enumerate}
\end{thm}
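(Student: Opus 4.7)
My approach is to establish \ref{it-i} $\Leftrightarrow$ \ref{it-ii} directly, then \ref{it-iii} $\Rightarrow$ \ref{it-i} by a short topological argument, and finally the nontrivial implication \ref{it-ii} $\Rightarrow$ \ref{it-iii} by a Wiener-type construction that amalgamates countably many thin sets into a single one.

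For \ref{it-i} $\Leftrightarrow$ \ref{it-ii}, set $E_\eps = \{x\in U : |u(x)-u(x_0)|\ge\eps\}$; note that $x_0\notin E_\eps$. By the equivalence \ref{j-b} $\Leftrightarrow$ \ref{j-aa} of Theorem~\ref{thm-j} (which is trivial if $x_0\notin\itoverline{E_\eps}$), the set $E_\eps$ is thin at $x_0$ if and only if $X\setm E_\eps$ is a fine neighbourhood of $x_0$. Intersecting with the fine neighbourhood $U$, this is in turn equivalent to the existence, for every $\eps>0$, of a finely open $V\ni x_0$ contained in $u^{-1}((u(x_0)-\eps,u(x_0)+\eps))$, which is exactly fine continuity of $u$ at $x_0$. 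In the direction \ref{it-i} $\Rightarrow$ \ref{it-ii} one also uses that subsets of thin sets are thin, which follows from the monotonicity of capacity.

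For \ref{it-iii} $\Rightarrow$ \ref{it-i}, suppose $E$ is thin at $x_0$ and the metric limit holds. By Theorem~\ref{thm-j}, $X\setm E$ is a fine neighbourhood of $x_0$. Given $\eps>0$, choose $\delta>0$ from the metric limit so that $|u(x)-u(x_0)|<\eps$ for all $x\in (U\setm E)\cap B(x_0,\delta)$. This last set is a fine neighbourhood of $x_0$, being a finite intersection of the fine neighbourhoods $U$, $X\setm E$ and the (metrically, hence finely) open ball $B(x_0,\delta)$; hence $u^{-1}((u(x_0)-\eps,u(x_0)+\eps))$ is a fine neighbourhood of $x_0$.

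The main obstacle is \ref{it-ii} $\Rightarrow$ \ref{it-iii}, where a single thin set $E$ must serve simultaneously for all $\eps>0$. I would use the Wiener-type sum characterization of thinness from Section~\ref{sect-fine-cont}: since each $E_n := E_{1/n}$ is thin at $x_0$, choose a strictly decreasing sequence $r_n\downarrow 0$ such that the tail of the Wiener sum for $E_n$ taken over scales below $r_n$ is at most $2^{-n}$. Define
\[
  E = \bigcup_{n=1}^\infty \bigl(E_n\cap B(x_0,r_n)\bigr).
\]
Because $E_m\subseteq E_n$ whenever $m\le n$ and the balls $B(x_0,r_m)$ shrink, on each annulus $B(x_0,r_n)\setm B(x_0,r_{n+1})$ the set $E$ coincides with $E_n$. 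Hence, summing the Wiener sum scale by scale, the contribution of $E$ at $x_0$ is dominated by $\sum_n 2^{-n}<\infty$, and $E$ is thin at $x_0$. Finally, if $x\in (U\setm E)\cap B(x_0,r_n)$ then $x$ lies in some annulus $B(x_0,r_m)\setm B(x_0,r_{m+1})$ with $m\ge n$, and $x\notin E$ forces $x\notin E_m$, so $|u(x)-u(x_0)|<1/m\le 1/n$. This yields the required metric limit along $U\setm E$.
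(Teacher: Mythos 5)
Your arrangement of the easy implications is fine and only cosmetically different from the paper's (which cycles \ref{it-i} $\Rightarrow$ \ref{it-iii} $\Rightarrow$ \ref{it-ii} $\Rightarrow$ \ref{it-i}, getting the last step from Lemma~\ref{lem-fine-intclosure}). In either arrangement, however, the substance of the theorem is the amalgamation of the countably many thin sets $E_n$ into a single thin set, which the paper isolates as Lemma~\ref{lem-countable-thin} and proves via Lemma~\ref{lem-cap-quot-le-eps} (equivalently Lemma~\ref{lem-thin-Cp-to-0}), i.e., ultimately via the weak Cartan property. Your construction attempts to bypass this machinery, and that is where there is a genuine gap.

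Concretely: choosing $r_n$ so that only the tail $\int_0^{r_n}$ of the Wiener integral of $E_n$ is at most $2^{-n}$ does not suffice, and the ``scale by scale'' summation is not valid. The Wiener integrand at scale $r$ involves $\cp(E\cap B(x_0,r),B(x_0,2r))$, i.e., the full ball $B(x_0,r)$, not the trace of $E$ on the annulus containing $r$. For $r\in[r_{n+1},r_n)$ one has
\[
E\cap B(x_0,r)=(E_n\cap B(x_0,r))\cup\bigcup_{m>n}\bigl(E_m\cap B(x_0,r_m)\bigr),
\]
so the deeper truncated pieces $E_m\cap B(x_0,r_m)$ with $m>n$ (which involve the \emph{larger} sets $E_m\supset E_n$) contribute at every scale $r\ge r_m$. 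Their capacities relative to $B(x_0,2r)$ for $r\gg r_m$ are not controlled by your tail condition: that condition only bounds $\cp(E_m\cap B(x_0,s),B(x_0,2s))$ against $\cp(B(x_0,s),B(x_0,2s))$ for $s\le r_m$, and by Lemma~\ref{lemma:capcomparison} the normalizing capacities $\cp(B(x_0,s),B(x_0,2s))\approx\mu(B(x_0,s))/s^p$ at the scales $s=r_m$ and $s=r$ are not comparable --- in general the ratio goes the wrong way as $r_m/r\to0$. What is actually needed is that the \emph{entire} integral $\int_0^1$ of the capacity quotient of the truncated set $E_n\cap B(x_0,\rho)$ (including the scales $r\ge\rho$) can be made small by shrinking $\rho$; this is exactly Lemma~\ref{lem-cap-quot-le-eps}, a nontrivial consequence of Lemma~\ref{lem-thin-Cp-to-0} and hence of the weak Cartan property. (A secondary issue: even after using subadditivity of $\cp$, passing from $\bigl(\sum_m a_m\bigr)^{1/(p-1)}$ to $\sum_m a_m^{1/(p-1)}$ fails for $1<p<2$.) Replacing your choice of $r_n$ by the one furnished by Lemma~\ref{lem-cap-quot-le-eps} --- that is, simply invoking Lemma~\ref{lem-countable-thin} --- repairs the argument and brings it in line with the paper's proof; the rest of your reasoning, including the final deduction of the metric limit along $U\setm E$, is correct.
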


Many of the results in this paper
are known on weighted $\R^n$, but as
far as we know, Theorem~\ref{thm-fine-quasicont-fine}
and
Proposition~\ref{prop-char-thin-cap-pos} are new on weighted $\R^n$ and Lemma~\ref{lemma-capfineclosure} is new even on unweighted $\R^n$.
Note also that many of our proofs in Sections~\ref{sect-weak-Cartan} and \ref{sect-coarsest} differ from the proofs on weighted $\R^n$, since our approach is purely based on variational inequalities, not on an equation.
The proofs of the auxiliary results in Section~\ref{sect-fine-cont}
are analogous to the Euclidean ones, but we have given proofs whenever some technical modifications are required.

\begin{ack}
The  first two authors were supported by the Swedish Research Council.
Part of this research was done during several visits
of the third author to Link\"opings universitet in 2009, 2012 and 2013.
The first of these visits was supported by the
Scandinavian Research Network \emph{Analysis and Application},
and the others by Link\"opings universitet. 
The paper was completed while all three authors
visited Institut Mittag-Leffler in the autumn of 2013.
They want to thank the institute for the hospitality,
and the third author also wishes to thank the 
Department of Mathematics at
Link\"opings universitet for its hospitality.
\end{ack}

\section{Notation and preliminaries}
\label{sect-prelim}

We assume throughout the paper that $1 < p<\infty$
and that $X=(X,d,\mu)$ is a metric space equipped
with a metric $d$ and a positive complete  Borel  measure $\mu$
such that $0<\mu(B)<\infty$ for all (open) balls $B \subset X$.
The $\sigma$-algebra on which $\mu$ is defined
is obtained by the completion of the Borel $\sigma$-algebra.
It follows that $X$ is separable.

Towards the end of the section we further assume
that $X$ is complete and  supports a \p-Poincar\'e inequality, and that
$\mu$ is doubling,
which are then assumed throughout the rest of
the paper.
We also always assume that $\Om \subset X$ is a nonempty open
set.

We say that $\mu$  is \emph{doubling} if
there exists a \emph{doubling constant} $C>0$ such that for all balls
$B=B(x_0,r):=\{x\in X: d(x,x_0)<r\}$ in~$X$,
\begin{equation*}
        0 < \mu(2B) \le C \mu(B) < \infty.
\end{equation*}
Here and elsewhere we let $\delta B=B(x_0,\delta r)$.
A metric space with a doubling measure is proper\/
\textup{(}i.e.\ closed and bounded subsets are compact\/\textup{)}
if and only if it is complete.
See Heinonen~\cite{heinonen} for more on doubling measures.

A \emph{curve} is a continuous mapping from an interval,
and a \emph{rectifiable} curve is a curve with finite length.
We will only consider curves which are nonconstant, compact and rectifiable.
A curve can thus be parameterized by its arc length $ds$.
We follow Heinonen and Koskela~\cite{HeKo98} in introducing
upper gradients as follows (they called them very weak gradients).

\begin{deff} \label{deff-ug}
A nonnegative Borel function $g$ on $X$ is an \emph{upper gradient}
of an extended real-valued function $f$
on $X$ if for all nonconstant, compact and rectifiable curves
$\gamma: [0,l_{\gamma}] \to X$,
\begin{equation} \label{ug-cond}
        |f(\gamma(0)) - f(\gamma(l_{\gamma}))| \le \int_{\gamma} g\,ds,
\end{equation}
where we follow the convention that the left-hand side is $\infty$
whenever at least one of the 
terms therein is infinite.
If $g$ is a nonnegative measurable function on $X$
and if (\ref{ug-cond}) holds for \p-almost every curve (see below),
then $g$ is a \emph{\p-weak upper gradient} of~$f$.
\end{deff}

Here we say that a property holds for \emph{\p-almost every curve}
if it fails only for a curve family $\Ga$ with zero \p-modulus,
i.e.\ there exists $0\le\rho\in L^p(X)$ such that
$\int_\ga \rho\,ds=\infty$ for every curve $\ga\in\Ga$.
Note that a \p-weak upper gradient need not be a Borel function,
it is only required to be measurable.
On the other hand,
every measurable function $g$ can be modified on a set of measure zero
to obtain a Borel function, from which it follows that
$\int_{\gamma} g\,ds$ is defined (with a value in $[0,\infty]$) for
\p-almost every curve $\ga$.
For proofs of these and all other facts in this section
we refer to Bj\"orn--Bj\"orn~\cite{BBbook} and
Heinonen--Koskela--Shanmugalingam--Tyson~\cite{HKST}.

The \p-weak upper gradients were introduced in
Koskela--MacManus~\cite{KoMc}. It was also shown there
that if $g \in \Lploc(X)$ is a \p-weak upper gradient of $f$,
then one can find a sequence $\{g_j\}_{j=1}^\infty$
of upper gradients of $f$ such that $g_j-g \to 0$ in $L^p(X)$.
If $f$ has an upper gradient in $\Lploc(X)$, then
it has a \emph{minimal \p-weak upper gradient} $g_f \in \Lploc(X)$
in the sense that for every \p-weak upper gradient $g \in \Lploc(X)$ of $f$ we have
$g_f \le g$ a.e., see Shan\-mu\-ga\-lin\-gam~\cite{Sh-harm}
and Haj\l asz~\cite{Haj03}.
The minimal \p-weak upper gradient is well defined
up to a set of measure zero in the cone of nonnegative functions in $\Lploc(X)$.
Following Shanmugalingam~\cite{Sh-rev},
we define a version of Sobolev spaces on the metric measure space $X$.

\begin{deff} \label{deff-Np}
Let
\[
        \|f\|_{\Np(X)} = \biggl( \int_X |f|^p \, \dmu
                + \inf_g  \int_X g^p \, \dmu \biggr)^{1/p},
\]
where the infimum is taken over all upper gradients of $f$.
The \emph{Newtonian space} on $X$ is
\[
        \Np (X) = \{f: \|f\|_{\Np(X)} <\infty \}.
\]
\end{deff}
\medskip

The space $\Np(X)/{\sim}$, where  $f \sim h$ if and only if $\|f-h\|_{\Np(X)}=0$,
is a Banach space and a lattice, see Shan\-mu\-ga\-lin\-gam~\cite{Sh-rev}.
In this paper we assume that functions in $\Np(X)$ are defined everywhere,
not just up to an equivalence class in the corresponding function space.
For a measurable set $E\subset X$, the Newtonian space $\Np(E)$ is defined by
considering $(E,d|_E,\mu|_E)$ as a metric space on its own.
We say  that $f \in \Nploc(\Om)$ if
for every $x \in \Om$ there exists a ball $B_x\ni x$ such that
$B_x\subset\Om$ and $f \in \Np(B_x)$.
If $f,h \in \Nploc(X)$, then $g_f=g_h$ a.e.\ in $\{x \in X : f(x)=h(x)\}$,
in particular $g_{\min\{f,c\}}=g_f \chi_{\{f < c\}}$ for $c \in \R$.

\begin{deff} \label{deff-sobcap}
The \emph{Sobolev capacity} of an arbitrary set $E\subset X$ is
\[
\Cp(E) = \inf_u\|u\|_{\Np(X)}^p,
\]
where the infimum is taken over all $u \in \Np(X)$ such that
$u\geq 1$ on $E$.
\end{deff}

The capacity is countably subadditive.
We say that a property holds \emph{quasieverywhere} (q.e.)\
if the set of points  for which the property does not hold
has capacity zero.
The capacity is the correct gauge
for distinguishing between two Newtonian functions.
If $u \in \Np(X)$, then $u \sim v$ if and only if $u=v$ q.e.
Moreover, Corollary~3.3 in Shan\-mu\-ga\-lin\-gam~\cite{Sh-rev} shows that
if $u,v \in \Np(X)$ and $u= v$ a.e., then $u=v$ q.e.

A set $U\subset X$ is \emph{quasiopen} if for every
$\varepsilon>0$ there is an open set $G\subset X$ such that $\Cp(G)<\varepsilon$
and $G\cup U$ is open.
A function $u$ on a quasiopen set $U\subset X$ is \emph{quasicontinuous}
if for every $\varepsilon>0$ there is an open set $G\subset X$ such that $\Cp(G)<\varepsilon$ and $u|_{U \setm G}$ is finite and continuous.

\begin{deff} \label{def.PI.}
We say that $X$ supports a \emph{\p-Poincar\'e inequality} if
there exist constants $C>0$ and $\lambda \ge 1$
such that for all balls $B \subset X$,
all integrable functions $f$ on $X$ and all upper gradients $g$ of $f$,
\begin{equation} \label{PI-ineq}
        \vint_{B} |f-f_B| \,\dmu
        \le C (\diam B) \biggl( \vint_{\lambda B} g^{p} \,\dmu \biggr)^{1/p},
\end{equation}
where $ f_B
 :=\vint_B f \,\dmu
:= \int_B f\, d\mu/\mu(B)$.
\end{deff}

In the definition of Poincar\'e inequality we can equivalently assume
that $g$ is a \p-weak upper gradient---see the comments above.
If $X$ is complete and  supports a \p-Poincar\'e inequality
and $\mu$ is doubling, then Lipschitz functions
are dense in $\Np(X)$, see Shan\-mu\-ga\-lin\-gam~\cite{Sh-rev}.
Moreover, all functions in $\Np(X)$
and those in $\Np(\Om)$ are quasicontinuous,
see Bj\"orn--Bj\"orn--Shan\-mu\-ga\-lin\-gam~\cite{BBS5}.
This means that in the Euclidean setting, $\Np(\R^n)$ is the
refined Sobolev space as defined in
Heinonen--Kilpel\"ainen--Martio~\cite[p.~96]{HeKiMa},
see Bj\"orn--Bj\"orn~\cite{BBbook}
for a proof of this fact valid in weighted $\R^n$.
This is the main reason why, unlike in the classical Euclidean setting,
we do not need to
require the functions admissible in the definition of capacity to be $1$ in a
neighbourhood of $E$.

In Section~\ref{sect-fine-cont} the fine topology is
defined by means of thin sets, which in turn use the
variational capacity $\cp$.
To be able to define the variational capacity we first
need a Newtonian space with zero boundary values.
We let, for an arbitrary set $A \subset X$,
\[
\Np_0(A)=\{f|_{A} : f \in \Np(X) \text{ and }
        f=0 \text{ on } X \setm A\}.
\]
One can replace the assumption ``$f=0$ on $X \setm A$''
with ``$f=0$ q.e.\ on $X \setm A$''
without changing the obtained space
$\Np_0(A)$.
Functions from $\Np_0(A)$ can be extended by zero in $X\setm A$ and we
will regard them in that sense if needed.

\begin{deff}
Let $A\subset X$ be arbitrary. The \emph{variational
capacity} of $E\subset A$ with respect to $A$ is
\[
\cp(E,A) = \inf_u\int_X g_u^p\, d\mu,
\]
where the infimum is taken over all $u \in \Np_0(A)$
such that
$u\geq 1$ on $E$.
\end{deff}

\begin{remark} \label{rmk-cp}
The infimum above can equivalently be taken over
$u \in \Np(X)$ such that $u\ge1$ q.e.\ on $E$ and $u=0$ q.e.\ outside $A$.
We will call such functions admissible for the capacity $\cp(E,A)$.

Similarly, one can test the capacity $\Cp(E)$ by any
function $u \in \Np(X)$ such that $u \ge 1$ q.e.\ on $E$,
and we will call  such a function admissible for $\Cp(E)$.
\end{remark}

We will mainly be interested in the variational capacity
with respect to open sets $A$, but in Lemma~\ref{lemma-capfineclosure}
we will generalize an earlier result for the variational capacity
to arbitrary sets.
The variational capacity with respect to nonopen sets was recently
studied and used in
Bj\"orn--Bj\"orn~\cite{BBnonopen} and~\cite{BBvarcap}.
(Note that the roles of $A$ and $E$ are reversed in \cite{BBnonopen}
and \cite{BBvarcap} compared with this paper.)

\medskip

\emph{Throughout the rest of the paper, we assume that
$X$ is complete and  supports a \p-Poincar\'e inequality, and that
$\mu$ is doubling.}

\medskip

The following lemma from J.~Bj\"orn~\cite{Bj}
compares the capacities $\Cp$ and $\cp$, and the measure $\mu$.
Here and elsewhere, the letter $C$ denotes various positive
constants whose values may vary even within a line.

\begin{lemma} \label{lemma:capcomparison}
Let $E \subset B=B(x_0,r)$ with $0<r<\frac{1}{6}\diam(X)$. Then
\[
\frac{\mu(E)}{Cr^p}\le\cp(E,2B)\le\frac{C\mu(B)}{r^p}
\]
and
\[
\frac{\Cp(E)}{C(1+r^p)}\leq \cp(E,2B)\leq 2^{p}\biggl(1+\frac{1}{r^p}\biggr)\Cp(E).
\]
In particular,
\[
\frac{\mu(B)}{Cr^{p}}\le\cp(B,2B)\le \frac{C\mu(B)}{r^{p}}.
\]
\end{lemma}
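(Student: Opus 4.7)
My plan is to establish the three pairs of inequalities by inserting explicit cutoff functions and invoking a standard Sobolev inequality for functions vanishing outside $2B$. The radius restriction $r<\tfrac{1}{6}\diam(X)$ ensures that $X\setm 2B$ contains a ball of radius comparable to $r$ (so that the Sobolev inequality with zero boundary values applies with constants depending only on the doubling and Poincar\'e data).

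For the upper bounds in the first and third displays, I would test $\cp(E,2B)$ with the explicit Lipschitz cutoff
$$\eta(x)=\max\{0,\min\{1,2-d(x,x_0)/r\}\},$$
which equals $1$ on $B\supset E$, vanishes outside $2B$, and satisfies $g_\eta\le 1/r$. Then $\cp(E,2B)\le\int g_\eta^p\,\dmu\le\mu(2B)/r^p\le C\mu(B)/r^p$ by the doubling property. The special case $E=B$ gives the upper bound in the third display. For the lower bound $\mu(E)\le Cr^p\cp(E,2B)$, let $u$ be admissible for $\cp(E,2B)$ and extend by zero. The Sobolev inequality for functions vanishing outside $2B$ (valid under our standing assumptions and the restriction on $r$) gives $\int_{2B}|u|^p\,\dmu\le Cr^p\int_X g_u^p\,\dmu$. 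Since $u\ge 1$ q.e.\ on $E$, the left-hand side dominates $\mu(E)$, and taking the infimum over admissible $u$ yields the bound; inserting $E=B$ gives the corresponding estimate in the third display.

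For the comparison of $\Cp$ and $\cp$: if $u$ is admissible for $\cp(E,2B)$ and extended by zero, then $u\in\Np(X)$ and $u$ is admissible for $\Cp(E)$, and by the same Sobolev inequality
$$\Cp(E)\le\|u\|_{\Np(X)}^p\le(1+Cr^p)\int_X g_u^p\,\dmu,$$
giving $\Cp(E)\le C(1+r^p)\cp(E,2B)$ after taking the infimum. For the reverse direction, given $v$ admissible for $\Cp(E)$, I would form $u=\eta\min\{v^+,1\}$ with the cutoff $\eta$ above. Then $u\equiv 1$ on $E$, $u=0$ outside $2B$, and the standard product and truncation rules for upper gradients give $g_u\le g_v\chi_{\{0<v<1\}}+(1/r)\min\{v^+,1\}\chi_{2B}\le g_v+v^+/r$. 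A short calculation
$$\int g_u^p\,\dmu\le 2^p\int g_v^p\,\dmu+\frac{2^p}{r^p}\int |v|^p\,\dmu\le 2^p\Bigl(1+\frac{1}{r^p}\Bigr)\|v\|_{\Np(X)}^p$$
and passing to the infimum yields the remaining inequality with the stated constant.

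The only step that requires genuine care is the Sobolev inequality with zero boundary values, where the restriction $r<\tfrac{1}{6}\diam(X)$ enters; everything else is bookkeeping with explicit test functions and the standard product/truncation rules. There is no deep obstacle because the statement is essentially an organized collection of capacity estimates that are all immediate consequences of the doubling condition, the $p$-Poincar\'e inequality and the definitions.
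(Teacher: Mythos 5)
The paper does not prove this lemma at all: it is quoted verbatim from J.~Bj\"orn \cite{Bj}, so there is no in-paper argument to compare against. Your self-contained proof is correct and is essentially the standard one (and, as far as I can tell, the one in the cited source): the upper bounds come from the explicit Lipschitz cutoff, the lower bounds and the $\Cp\lesssim\cp$ direction come from the Sobolev--Friedrichs inequality for $\Np_0(2B)$ (which is exactly where the hypothesis $r<\tfrac16\diam X$ is used, and which the paper itself invokes elsewhere via Corollary~5.54 of \cite{BBbook}), and the remaining direction follows from the product and truncation rules with the constant $2^p(1+r^{-p})$ as claimed.
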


We will also need the following result from
Bj\"orn--Bj\"orn--Shan\-mu\-ga\-lin\-gam~\cite{BBS5}.
(It was recently extended to arbitrary bounded sets $\Om$ in
Bj\"orn--Bj\"orn~\cite{BBvarcap}, but we will not need that generality here.)
Recall that $E \Subset \Om$ if $\itoverline{E}$ is a compact subset of $\Om$.

\begin{thm} \label{thm-outercap-cp}
Let $\Om \subset X$ be a bounded open set.
The variational capacity $\cp$
is an \emph{outer capacity} for sets $E \Subset \Om$,
i.e. 
\begin{equation} \label{eq-outercap-cp}
\cp(E,\Om)=\inf_{\substack{G \text{ open} \\  E\subset G \subset \Om}} \cp(G,\Om).
\end{equation}
\end{thm}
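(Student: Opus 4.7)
The easy direction $\cp(E,\Om)\le\inf_G\cp(G,\Om)$ is immediate from the monotonicity of $\cp$ in its first argument, since every admissible function for $\cp(G,\Om)$ is admissible for $\cp(E,\Om)$ whenever $E\subset G$. The work is all in the reverse inequality.

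Given $\eps>0$, I would use Remark \ref{rmk-cp} to pick $u\in\Np(X)$ admissible for $\cp(E,\Om)$ with $0\le u\le 1$, $u\ge 1$ q.e.\ on $E$, $u=0$ q.e.\ outside $\Om$, and $\int_X g_u^p\,\dmu<\cp(E,\Om)+\eps$. The idea is to replace the level set $\{u\ge 1\}$, which need not be open, by an honestly open set of comparable variational capacity. To get some slack, set $v=(1+\de)u$ with $\de>0$ small, so that $v\ge 1+\de$ q.e.\ on $E$. Because $X$ is complete and doubling with a \p-Poincar\'e inequality, every Newtonian function is quasicontinuous (as noted after Definition~\ref{def.PI.}), so there is an open set $W\subset X$ with $\Cp(W)<\eta$ such that $v|_{X\setm W}$ is continuous. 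Then $\{x\in X\setm W:v(x)>1\}$ is relatively open in $X\setm W$, so it equals $V\cap(X\setm W)$ for some open $V\subset X$, and $G:=V\cup W$ is an open set that contains $E$ up to a set of $\Cp$\nobreakdash-measure zero.

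To keep $G$ inside $\Om$, I would exploit the hypothesis $\itoverline{E}\Subset\Om$: choose an open $\Om'$ with $\itoverline{E}\subset\Om'\Subset\Om$ and intersect both $V$ and $W$ with $\Om'$ before forming $G$. To estimate $\cp(G,\Om)$, pick a function $\phi\in\Np_0(\Om)$ admissible for $\Cp(W\cap\Om')$ with $\int_X g_\phi^p\,\dmu\le C\eta$, which is possible by Lemma~\ref{lemma:capcomparison}. Then $v+\phi\in\Np_0(\Om)$ satisfies $v+\phi\ge 1$ q.e.\ on $G$, so by the subadditivity of minimal weak upper gradients and the $L^p$\nobreakdash-triangle inequality,
\[
\cp(G,\Om)^{1/p}\le \|g_v\|_{L^p}+\|g_\phi\|_{L^p}\le (1+\de)\bigl(\cp(E,\Om)+\eps\bigr)^{1/p}+(C\eta)^{1/p}.
\]
Letting $\eps,\de,\eta\to 0$ yields $\inf_G\cp(G,\Om)\le\cp(E,\Om)$, as required.

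The main obstacle is step two: one must make sure that the abstractly produced open set $V$ really does cover $E$ modulo $\Cp$\nobreakdash-null sets and stays inside $\Om$. This forces one to absorb into the small exceptional set $W$ the q.e.\ failure sets where $u\ge 1$ on $E$ or $u=0$ off $\Om$ might not hold pointwise, and it is precisely the hypothesis $E\Subset\Om$ that gives the room $\Om'\Subset\Om$ needed to prevent $G$ from leaking out to $\bdry\Om$. Once quasicontinuity of Newtonian functions and the comparability between $\Cp$ and $\cp$ from Lemma~\ref{lemma:capcomparison} are in hand, the rest is the standard inflation-plus-cutoff bookkeeping.
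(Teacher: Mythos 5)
The paper does not actually prove this theorem: it is quoted from Bj\"orn--Bj\"orn--Shanmugalingam~\cite{BBS5} (see the sentence preceding the statement), so there is no in-paper argument to compare against. Your proposal is the standard derivation of outer regularity from quasicontinuity and is essentially how the cited source proceeds: inflate a near-optimal admissible function, remove a small open set off which it is continuous so that a strict superlevel set becomes relatively open, and patch the exceptional sets with a correction of small energy. Two points deserve more care than you give them. First, what you need for $\phi$ is smallness of the \emph{variational} capacity $\cp(W\cap\Om',\Om)$, and Lemma~\ref{lemma:capcomparison} compares $\Cp(E)$ with $\cp(E,2B)$ only for $E$ contained in a ball $B$ with $2B$ playing the role of the ambient set; since no ball $B\supset\Om'$ with $2B\subset\Om$ need exist, you should either cover $\itoverline{\Om'}$ (compact, as $X$ is proper) by finitely many balls $B_i$ with $2B_i\subset\Om$ and use subadditivity, or multiply an admissible function for $\Cp(W)$ by a Lipschitz cutoff supported in $\Om$; either way the constant depends on $\Om'$ and $\Om$, which is harmless because it multiplies $\eta\to0$. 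Second, covering the $\Cp$-null sets (where $v\ge 1+\de$ on $E$ or $v=0$ off $\Om$ fails pointwise) by open sets of small Sobolev capacity is itself a consequence of quasicontinuity applied to an admissible function of small norm, not something available a priori; you flag the issue but should carry it out, enlarging $W$ accordingly before choosing $\phi$. With these glosses the argument is complete and correct.
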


\section{Superminimizers and superharmonic functions}
\label{sect-superharm}

In this section we introduce superminimizers and superharmonic functions,
as well as obstacle problems, which all will be needed in later sections.
For further discussion and references on these topics
see Kinnunen--Martio~\cite{KiMa02} and~\cite{KiMasuperh}, and also
Bj\"orn--Bj\"orn~\cite{BBbook}
(which also contains proofs of the facts mentioned in this section,
but for Lemma~\ref{lem-localize}).

\begin{deff} \label{def-quasimin}
A function $u \in \Nploc(\Om)$ is a
\emph{\textup{(}super\/\textup{)}minimizer} in $\Om$
if
\[
      \int_{\{\phi \ne 0\}} g^p_u \, d\mu
           \le \int_{\{\phi \ne 0\}} g_{u+\phi}^p \, d\mu
           \quad \text{for all (nonnegative) } \phi \in \Np_0(\Om).
\]
A function $u$ is a \emph{subminimizer} if $-u$ is a superminimizer.
A \emph{\p-harmonic function} is a continuous minimizer.
\end{deff}

For characterizations of minimizers and superminimizers
  see A.~Bj\"orn~\cite{ABkellogg}.
Minimizers were first studied for functions in $\Np(X)$
in Shanmugalingam~\cite{Sh-harm}.
For a superminimizer $u$,  it was shown by
Kinnunen--Martio~\cite{KiMa02} that its \emph{lower semicontinuous regularization}
\begin{equation}
\label{essliminf}
 u^*(x):=\essliminf_{y\to x} u(y)= \lim_{r \to 0} \essinf_{B(x,r)} u
\end{equation}
is also a superminimizer  and $u^*= u$ q.e.
For an alternative proof of this fact see
Bj\"orn--Bj\"orn--Parviainen~\cite{BBP}.
If
$u$ is a minimizer, then $u^*$ is  continuous,
and thus \p-harmonic, see
Kinnunen--Shan\-mu\-ga\-lin\-gam~\cite{KiSh01}.

We will need the following weak Harnack inequalities.

\begin{thm}  \label{thm-sup-submin-obst-prob}
\textup{(Weak Harnack inequality for subminimizers)}
Let $q>0$.
Then there is $C>0$ such that
for all subminimizers $u$ in $\Om$  and all balls $B\subset 2B \subset \Om$,
\[
\esssup_{B} u
    \le C \biggl( \vint_{2B} u_\limplus^q \, \dmu \biggr)^{1/q}.
\]
Here $u_\limplus:=\max\{u,0\}$.
\end{thm}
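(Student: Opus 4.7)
The plan is to carry out a Moser iteration, which is by now standard in this setting but does require some care in the metric-space framework where one does not have an equation but only a variational inequality. The three ingredients are (i) a Caccioppoli estimate for truncations of the subminimizer, (ii) the Sobolev--Poincar\'e inequality that follows from the $p$-Poincar\'e inequality and doubling, and (iii) a final interpolation argument to lower the exponent on the right-hand side from $p$ to an arbitrary $q>0$.

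First, I would verify the Caccioppoli inequality: for any $k\ge 0$ and any nonnegative Lipschitz cutoff $\eta$ with $\operatorname{supp}\eta\Subset\Omega$,
\[
\int_{\Omega} \eta^p g_{(u-k)_\limplus}^p\,\dmu
 \le C\int_{\Omega} (u-k)_\limplus^p g_\eta^p\,\dmu.
\]
This is obtained by testing the subminimizer condition against $\phi = -\eta^p (u-k)_\limplus$ (which lies in $\Np_0(\Omega)$ since $\eta$ has compact support), expanding $g_{u+\phi}$ using the Leibniz-type identity for minimal weak upper gradients on the sets $\{u>k\}$ and $\{u\le k\}$ separately, and absorbing. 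Here one uses the product/locality rules available in $\Nploc(\Omega)$.

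Next, I would combine this Caccioppoli estimate with the Sobolev--Poincar\'e inequality, which in our setting provides some $\kappa>1$ with
\[
\biggl(\vint_{B}|f|^{p\kappa}\,\dmu\biggr)^{1/p\kappa}
 \le Cr\biggl(\vint_{\lambda B} g_f^p\,\dmu\biggr)^{1/p}
\]
for $f\in\Np_0(B)$, $B=B(x_0,r)$. Applied to $f=\eta(u-k)_\limplus$, this yields a reverse-H\"older improvement of the form
\[
\biggl(\vint_{B'} (u-k)_\limplus^{p\kappa}\,\dmu\biggr)^{1/p\kappa}
 \le \frac{C}{r-r'}\biggl(\vint_{B} (u-k)_\limplus^{p}\,\dmu\biggr)^{1/p}
\]
for concentric balls $B'\subset B\subset 2B\subset\Omega$ with radii $r'<r$. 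Iterating this estimate over a geometrically shrinking sequence of radii $r_i\downarrow\tfrac12\operatorname{rad}(2B)$ and exponents $p\kappa^i$, and summing the resulting geometric series, gives
\[
\esssup_{B}u_\limplus
 \le C\biggl(\vint_{2B} u_\limplus^{p}\,\dmu\biggr)^{1/p}.
\]
This is the desired inequality in the case $q=p$; the same argument works verbatim for any $q\ge p$.

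The main obstacle is the case $0<q<p$, where the Caccioppoli--Sobolev iteration cannot be started directly. I would handle this by a standard Bombieri--Giusti type interpolation: letting $M(t)=\esssup_{B(x_0,t)} u_\limplus$ and applying the $L^p$-version above on intermediate balls of radii $t$ and $s$ with $s<t$, write
\[
M(s)\le \frac{C}{(t-s)^{n/p}}\biggl(\int_{B(x_0,t)} u_\limplus^{p}\,\dmu\biggr)^{1/p}
 \le \frac{CM(t)^{1-q/p}}{(t-s)^{n/p}}\biggl(\int_{2B} u_\limplus^{q}\,\dmu\biggr)^{1/p},
\]
where $n$ is replaced by the relevant doubling dimension of $\mu$. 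A standard iteration lemma on monotone functions (absorbing $M(t)^{1-q/p}$ into the left-hand side via Young's inequality and a telescoping choice of radii) then converts this into the stated bound with the $L^q$-average. At the end one relates $\esssup_{B} u$ to $\esssup_{B} u_\limplus$ trivially and concludes.
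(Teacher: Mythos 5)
This theorem is not proved in the paper: it is quoted from Kinnunen--Shanmugalingam \cite{KiSh01} and Kinnunen--Martio \cite{KiMa02} (with the modifications and the Moser-iteration proof in Bj\"orn--Marola \cite{BMarola}). Your sketch --- Caccioppoli estimate from the variational inequality, Sobolev--Poincar\'e, De Giorgi/Moser iteration, and the Bombieri--Giusti absorption step to pass from exponent $p$ to arbitrary $q>0$ --- is essentially the argument of those cited sources, so it is correct and takes the same approach as the proof the paper relies on.
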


\begin{thm} \label{theorem:WeakHarnack}
\textup{(Weak Harnack inequality for superminimizers)}
There are $q > 0$ and $C > 0$,
such that for all  nonnegative superminimizers $u$  in\/ $\Omega$,
\begin{equation} \label{eq:WeakHarnack}
     \biggl(\vint_{2B}u^{q}\, d\mu\biggr)^{1/q} \leq C\essinf_{B}u
\end{equation}
for every ball $B \subset 50 \la B \subset \Omega$.
\end{thm}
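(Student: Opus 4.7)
The plan is to run a Moser-type iteration adapted to the metric setting, using only the superminimizer variational inequality together with the doubling property and the $p$-Poincar\'e inequality. The argument goes through the standard chain: Caccioppoli estimate for powers of $u$, reverse H\"older via Sobolev--Poincar\'e, iteration to negative exponents, and a John--Nirenberg bridge between positive and negative exponents. I would work with $u_\varepsilon:=u+\varepsilon$ throughout and pass to the limit $\varepsilon\to0^+$ at the end to deal with the zero set of $u$.

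The first step is to obtain, for each exponent $\gamma<0$, the Caccioppoli inequality
\[
\int_{\Om} \eta^p u_\varepsilon^{\gamma-1} g_u^p\, d\mu \le C(p,\gamma) \int_{\Om} g_\eta^p u_\varepsilon^{\gamma+p-1}\, d\mu
\]
for every Lipschitz cutoff $\eta\ge 0$ compactly supported in $\Om$. This is obtained by plugging the admissible test function $\phi=\eta^p u_\varepsilon^{\gamma}$ (nonnegative and in $\Np_0(\Om)$, since $u_\varepsilon\ge\varepsilon$ and $\eta$ is compactly supported) into the superminimizer inequality and using the chain rule $g_{u_\varepsilon^\gamma}=|\gamma|\,u_\varepsilon^{\gamma-1}g_u$ a.e.\ together with locality of minimal $p$-weak upper gradients. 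Rewriting the resulting bound in terms of $v:=u_\varepsilon^{(\gamma+p-1)/p}$ yields a standard Caccioppoli estimate for $v$, and combining it with the Sobolev--Poincar\'e inequality gives a reverse H\"older inequality
\[
\biggl(\vint_{B'} u_\varepsilon^{s\chi}\, d\mu\biggr)^{1/(s\chi)} \le C\biggl(\vint_{B''} u_\varepsilon^{s}\, d\mu\biggr)^{1/s}
\]
for concentric balls $B'\subset B''\subset\la B''\Subset\Om$, some $\chi>1$, and all sufficiently negative $s$. Iterating this on a geometric sequence of shrinking radii fitting inside the safety room $50\la B\subset\Om$ and letting $s\to-\infty$ produces, for some $q_0>0$,
\[
\biggl(\vint_{2B} u_\varepsilon^{-q_0}\, d\mu\biggr)^{-1/q_0} \le C\essinf_{B} u_\varepsilon.
\]

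To close the gap between negative and positive exponents, I would apply the Caccioppoli calculation in the limiting case (formally $\gamma=1-p$), which after standard manipulations produces the logarithmic energy estimate $\int \eta^p g_{\log u_\varepsilon}^p\, d\mu \le C\int g_\eta^p\, d\mu$. Via the $p$-Poincar\'e inequality this says that $\log u_\varepsilon$ has bounded mean oscillation on any ball strictly inside $\Om$, and the John--Nirenberg lemma for doubling metric measure spaces then yields some $q_1>0$ with
\[
\vint_{2B} u_\varepsilon^{q_1}\, d\mu\cdot\vint_{2B} u_\varepsilon^{-q_1}\, d\mu \le C.
\]
Taking $q=\min(q_0,q_1)$ and combining with the iteration bound yields
\[
\biggl(\vint_{2B} u_\varepsilon^q\, d\mu\biggr)^{1/q} \le C\biggl(\vint_{2B} u_\varepsilon^{-q}\, d\mu\biggr)^{-1/q} \le C\essinf_{B} u_\varepsilon,
\]
and monotone convergence as $\varepsilon\to0^+$ gives the theorem.

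The main obstacle is the Caccioppoli step: without pointwise gradients, the chain rule and locality properties for $g_{u_\varepsilon^\gamma}$, the legitimacy of $\phi$ as a test function in $\Np_0(\Om)$, and the control of the set $\{u=0\}$ all require care, which is precisely why the $\varepsilon$-regularization is introduced. Once the Caccioppoli inequality is secured, the rest of the argument runs along the usual Moser lines, with the $50\la$ safety margin in the hypothesis accommodating the bounded number of Poincar\'e dilations and Caccioppoli ball enlargements consumed during the iteration and in the $\log$-BMO step.
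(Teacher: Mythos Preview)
The paper does not actually prove this theorem: it is stated as background and attributed to Kinnunen--Martio (via De Giorgi's method) and to Bj\"orn--Marola (via Moser iteration). Your outline is the Moser iteration route, and it matches the strategy of the second of these references. The ingredients you list --- Caccioppoli for $u_\varepsilon^\gamma$ with $\gamma<0$ obtained by testing with $\phi=\eta^p u_\varepsilon^\gamma$, the Sobolev--Poincar\'e reverse H\"older step, iteration to negative exponents, the logarithmic Caccioppoli estimate giving $\log u_\varepsilon\in\mathrm{BMO}$, and the John--Nirenberg bridge --- are exactly the standard steps and are all available under the hypotheses (completeness, doubling, $p$-Poincar\'e). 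The $\varepsilon$-regularization is the right device to guarantee that $\phi\in\Np_0(\Om)$ and that the chain rule for minimal $p$-weak upper gradients applies cleanly.

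One point worth tightening: in the metric setting the superminimizer inequality reads $\int_{\{\phi\ne0\}} g_u^p\,d\mu\le\int_{\{\phi\ne0\}} g_{u+\phi}^p\,d\mu$, and extracting the Caccioppoli estimate from this requires the Leibniz and chain rules for minimal $p$-weak upper gradients together with Young's inequality and absorption, rather than a pointwise gradient identity. This is routine (and is carried out in the cited references), but you should be explicit that $g_{u+\phi}\le g_u+g_\phi$ is only an inequality, so the argument proceeds by estimating $g_{u+\phi}$ from above rather than by expanding an equality. Apart from that, your plan is sound and constitutes a correct proof along one of the two lines the paper points to.
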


These Harnack inequalities were in metric spaces first obtained for minimizers
by Kinnunen--Shanmugalingam~\cite{KiSh01}, using De Giorgi's method,
whereas Kinnunen--Martio~\cite{KiMa02} soon afterwards modified
the arguments for sub- and superminimizers.
See Bj\"orn--Marola~\cite{BMarola}, p.\ 363, for some necessary
modifications of the statements in \cite{KiSh01} and \cite{KiMa02},
and for alternative proofs using Moser iteration.

For a nonempty bounded open set $G \subset X$ with $\Cp(X \setm G)>0$
we consider the following obstacle problem.
(If $X$ is
unbounded then the condition $\Cp(X \setm G)>0$ is of course
immediately fulfilled.)

\begin{deff} \label{deff-obst}
For $f \in \Np(G)$ and $\psi : G \to \eR$ let
\[
    \K_{\psi,f}(G)=\{v \in \Np(G) : v-f \in \Np_0(G)
            \text{ and } v \ge \psi \ \text{q.e. in } G\}.
\]
A function $u \in \K_{\psi,f}(G)$
is a \emph{solution of the $\K_{\psi,f}(G)$-obstacle problem}
if
\[
       \int_G g^p_u \, d\mu
       \le \int_G g^p_v \, d\mu
       \quad \text{for all } v \in \K_{\psi,f}(G).
\]
\end{deff}

A solution to the $\K_{\psi,f}(G)$-obstacle problem is easily
seen to be a superminimizer in $G$.
Conversely, a superminimizer $u$ in $\Om$ is a solution
of the $\K_{u,u}(G)$-obstacle problem for all open $G \Subset \Om$
with $\Cp(X\setm G)>0$.

If $\K_{\psi,f}(G) \ne \emptyset$, then
there is a solution of the $\K_{\psi,f}(G)$-obstacle problem,
and this solution is unique up to equivalence in $\Np(G)$.
Moreover, $u^*$
is the unique lower semicontinuously regularized solution.
If the obstacle $\psi$ is continuous, then $u^*$
is also continuous.
The obstacle $\psi$, as a continuous function, is even allowed
to take the value $-\infty$.
For $f \in \Np(G)$, we let $\oHpind{G} f$ denote the
continuous solution of the $\K_{-\infty,f}(G)$-obstacle problem;
this function is \p-harmonic in $G$ and has the same boundary values
(in the Sobolev sense) as $f$ on $\partial G$, and hence is also
called the solution of the Dirichlet problem with Sobolev boundary values.

\begin{deff} \label{deff-superharm}
A function $u : \Om \to (-\infty,\infty]$ is \emph{superharmonic}
in $\Om$ if
\begin{enumerate}
\renewcommand{\theenumi}{\textup{(\roman{enumi})}}%
\item $u$ is lower semicontinuous;
\item \label{cond-b}
$u$ is not identically $\infty$ in any component of $\Om$;
\item \label{cond-c}
for every nonempty open set $G \Subset \Om$ with $\Cp(X \setm G)>0$
and all functions
$v \in \Lip(X)$,
we have $\oHpind{G} v \le u$ in $G$
whenever $v \le u$ on $\bdy G$.
\end{enumerate}
\end{deff}

This definition of superharmonicity is equivalent to  the ones in
Hei\-no\-nen--Kil\-pe\-l\"ai\-nen--Martio~\cite{HeKiMa} and
Kinnunen--Martio~\cite{KiMa02}, see A.~Bj\"orn~\cite{ABsuper}.
A locally bounded superharmonic function is a superminimizer,
and all superharmonic functions are lower semicontinuously regularized.
Conversely, any lower semicontinuously regularized superminimizer
is superharmonic.

We will need the following comparison lemma for solutions to obstacle problems
from Bj\"orn--Bj\"orn~\cite{BB}.

\begin{lem} \label{lem-obst-le-reg}
\textup{(Comparison principle)}
Assume that\/ $\Om$ is bounded and such
that $\Cp(X \setm \Om)>0$.
Let $\psi_j \colon \Om \to \eR$ and $f_j \in \Np(\Om)$
be such that $\K_{\psi_j,f_j} \ne \emptyset$,
and let $u_j$ be the lower semicontinuously regularized
solution of the $\K_{\psi_j,f_j}$-obstacle problem, $j=1,2$.
If $\psi_1 \le \psi_2$ q.e.\ in\/ $\Om$
and\/ $(f_1-f_2)_\limplus \in \Np_0(\Om)$,
then $u_1 \le u_2$  in\/ $\Om$.
\end{lem}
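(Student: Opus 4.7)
The plan is the standard ``pasting by min/max'' comparison argument: set $v_1=\min\{u_1,u_2\}$ and $v_2=\max\{u_1,u_2\}$, verify that these are admissible in the two obstacle problems, test the minimizing property of $u_1$ and $u_2$ against them, and then pass from q.e.\ uniqueness of solutions to everywhere comparison via the lower semicontinuous regularization.

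First I would check that $v_1\in\K_{\psi_1,f_1}(\Om)$ and $v_2\in\K_{\psi_2,f_2}(\Om)$. The obstacle inequalities are immediate from $\psi_1\le\psi_2$ q.e., since $v_1\ge\min\{\psi_1,\psi_2\}=\psi_1$ and $v_2\ge\max\{\psi_1,\psi_2\}=\psi_2$ q.e.\ in $\Om$. For the boundary conditions, let $\tilde u_j=u_j-f_j\in\Np_0(\Om)$ (extended by zero outside $\Om$). A direct computation gives
\[
v_1-f_1=\tilde u_1-(\tilde u_1-\tilde u_2+f_1-f_2)_\limplus,
\qquad
v_2-f_2=\tilde u_2+(\tilde u_1-\tilde u_2+f_1-f_2)_\limplus.
\]
Outside $\Om$, where $\tilde u_1=\tilde u_2=0$ q.e., the $(\cdot)_\limplus$ term equals $(f_1-f_2)_\limplus$, which belongs to $\Np_0(\Om)$ by hypothesis and hence vanishes q.e.\ there. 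Thus $v_1-f_1,v_2-f_2\in\Np_0(\Om)$, since $\Np_0(\Om)$ is a lattice closed under addition.

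Next, testing the obstacle problems with these competitors,
\[
\int_\Om g_{u_1}^p\,d\mu\le\int_\Om g_{v_1}^p\,d\mu
\quad\text{and}\quad
\int_\Om g_{u_2}^p\,d\mu\le\int_\Om g_{v_2}^p\,d\mu,
\]
and summing, one uses the a.e.\ identity $g_{v_1}^p+g_{v_2}^p=g_{u_1}^p+g_{u_2}^p$. This follows by splitting $\Om$ into $\{u_1\le u_2\}$ and $\{u_1>u_2\}$ and invoking the standard fact (recalled in Section~\ref{sect-prelim}) that $g_f=g_h$ a.e.\ on $\{f=h\}$. Equality must therefore hold in both of the displayed inequalities, so $v_1$ is itself a solution of the $\K_{\psi_1,f_1}$-obstacle problem.

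Finally I would upgrade to pointwise comparison. By uniqueness of the lsc-regularized solution, the regularization $v_1^*$ of the solution $v_1$ coincides with $u_1$ everywhere in $\Om$. Conversely, since $v_1\le u_j$ pointwise and $u_j=u_j^*$, taking essential infima over balls and passing to the limit yields $v_1^*(x)\le u_j^*(x)=u_j(x)$ for $j=1,2$, hence $v_1^*\le v_1$. Combining,
\[
u_1=v_1^*\le v_1=\min\{u_1,u_2\}\le u_2\quad\text{in }\Om,
\]
which is the desired inequality. The main obstacle is the admissibility step: turning the Sobolev-type hypothesis $(f_1-f_2)_\limplus\in\Np_0(\Om)$ into the pointwise identities above so that $v_1-f_1,\,v_2-f_2\in\Np_0(\Om)$. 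Once this is done, the energy-splitting identity and the sharp (everywhere, not just q.e.) uniqueness of the lsc-regularized solution do the rest.
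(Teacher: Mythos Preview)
The paper does not give its own proof of this lemma; it is quoted from Bj\"orn--Bj\"orn~\cite{BB} (and appears as Lemma~8.30 in~\cite{BBbook}). Your min/max pasting argument is exactly the standard proof used there, so in that sense you have reproduced the cited argument correctly.

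One point worth tightening is the admissibility step. Your sentence ``Outside $\Om$, where $\tilde u_1=\tilde u_2=0$ q.e., the $(\cdot)_\limplus$ term equals $(f_1-f_2)_\limplus$'' is slightly informal, since $f_1-f_2$ is only defined on $\Om$. The clean way to phrase it is: $(u_1-u_2)_\limplus=(\tilde u_1-\tilde u_2+f_1-f_2)_\limplus$ lies in $\Np(\Om)$ and satisfies
\[
0\le(u_1-u_2)_\limplus\le(\tilde u_1-\tilde u_2)_\limplus+(f_1-f_2)_\limplus\in\Np_0(\Om),
\]
whence $(u_1-u_2)_\limplus\in\Np_0(\Om)$ by the standard sandwich lemma for $\Np_0$ (Lemma~2.37 in~\cite{BBbook}). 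The rest of your argument---the energy identity $g_{v_1}^p+g_{v_2}^p=g_{u_1}^p+g_{u_2}^p$ a.e., and the upgrade from q.e.\ to everywhere via $u_1=v_1^*\le v_1\le u_2$---is correct as written.
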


The following simple localization lemma will be useful in the coming proofs.
For a proof in the metric space setting see  Farnana~\cite{FarFennAnn},
Lemma~3.6.

\begin{lem}  \label{lem-localize}
Let $u$ be the lower semicontinuously regularized solution of the $\K_{\psi,f}(\Om)$-obstacle
problem and let $\Om'\subset\Om$ be open.
Then $u$ is the lower semicontinuously regularized solution of the  $\K_{\psi,u}(\Om')$-obstacle
problem.
\end{lem}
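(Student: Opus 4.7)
The plan is to verify directly from Definition~\ref{deff-obst} that $u|_{\Om'}$ minimizes the $\K_{\psi,u}(\Om')$-obstacle problem, by gluing any competitor back into a competitor on $\Om$ and exploiting the locality of the minimal \p-weak upper gradient recorded in Section~\ref{sect-prelim}. The lower semicontinuous regularization will then be inherited automatically.

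First I would observe that $u \in \K_{\psi,u}(\Om')$: trivially $u - u = 0 \in \Np_0(\Om')$, and $u \ge \psi$ q.e.\ in $\Om \supset \Om'$ because $u$ solves the $\K_{\psi,f}(\Om)$-problem. In particular the obstacle class is nonempty. Next, given an arbitrary competitor $v \in \K_{\psi,u}(\Om')$, I would extend it to $\Om$ by setting
\[
     \tilde v(x) = \begin{cases} v(x), & x\in\Om', \\ u(x), & x \in \Om\setm\Om'. \end{cases}
\]
The function $\tilde v - u$ vanishes on $\Om\setm\Om'$ and equals $v-u\in\Np_0(\Om')$ on $\Om'$, so after extension by zero it lies in $\Np_0(\Om)$. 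Writing $\tilde v - f = (\tilde v - u) + (u-f)$ therefore shows $\tilde v - f \in \Np_0(\Om)$. The obstacle inequality $\tilde v \ge \psi$ q.e.\ in $\Om$ follows by separately checking it on $\Om'$ (where $v\ge\psi$ q.e.) and on $\Om\setm\Om'$ (where $\tilde v = u \ge \psi$ q.e.). Hence $\tilde v \in \K_{\psi,f}(\Om)$.

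I would then invoke the minimality of $u$ for the $\K_{\psi,f}(\Om)$-problem to obtain
\[
     \int_\Om g_u^p\,\dmu \le \int_\Om g_{\tilde v}^p\,\dmu.
\]
The locality property of minimal \p-weak upper gradients quoted in Section~\ref{sect-prelim} (namely $g_{h_1}=g_{h_2}$ a.e.\ on $\{h_1=h_2\}$) gives $g_{\tilde v} = g_u$ a.e.\ on $\Om\setm\Om'$ and $g_{\tilde v} = g_v$ a.e.\ on the open set $\Om'$. Subtracting the integrals over $\Om\setm\Om'$ from both sides yields
\[
     \int_{\Om'} g_u^p\,\dmu \le \int_{\Om'} g_v^p\,\dmu,
\]
so $u|_{\Om'}$ is a solution of the $\K_{\psi,u}(\Om')$-obstacle problem. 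Finally, because the essential lower limit in \eqref{essliminf} depends only on values in arbitrarily small balls around $x$, and every $x\in\Om'$ has such balls inside $\Om'$, the identity $u=u^*$ on $\Om$ descends to the identity $u=u^*$ on $\Om'$. Therefore $u|_{\Om'}$ is already lower semicontinuously regularized.

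The main obstacle I expect is administrative rather than conceptual: one must verify carefully that $\tilde v\in\Np(\Om)$ with the upper-gradient identities claimed above. This uses the standard pasting property for Newtonian functions (applied via $\tilde v = u + (v-u)$ with $v-u$ extended by zero from $\Np_0(\Om')$ into $\Np_0(\Om)$) together with the locality of $g_{\cdot}$ on sets of equality. Once this bookkeeping is in place, the rest of the argument is a direct application of the minimality of $u$ on $\Om$.
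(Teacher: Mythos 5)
Your proof is correct and follows the standard gluing argument: extend a competitor $v\in\K_{\psi,u}(\Om')$ by $u$ to all of $\Om$, check admissibility for $\K_{\psi,f}(\Om)$, and use locality of minimal \p-weak upper gradients to localize the minimality, with the regularization inherited since the $\essliminf$ in \eqref{essliminf} is local. The paper does not reproduce a proof but refers to Farnana \cite{FarFennAnn}, Lemma~3.6, whose argument is essentially the one you give, so there is nothing to add.
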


\section{Fine topology}
\label{sect-fine-cont}

In this section we introduce the main concepts of this paper and present the necessary auxiliary results. At the end of the section, we prove
Theorem~\ref{thm-fine-quasicont-fine}.

A set $E\subset X$ is  \emph{thin} at $x\in X$ if
\begin{equation}   \label{deff-thin}
\int_0^1\biggl(\frac{\cp(E\cap B(x,r),B(x,2r))}{\cp(B(x,r),B(x,2r))}\biggr)^{1/(p-1)}
     \frac{dr}{r}<\infty.
\end{equation}
A set $U\subset X$ is \emph{finely open} if
$X\setminus U$ is thin at each point $x\in U$.
It is easy to see that the finely open sets give rise to a
topology, which is called the \emph{fine topology},
see Proposition~11.36 in Bj\"orn--Bj\"orn~\cite{BBbook}.
Every open set is finely open, but the converse is not true in general.

For any $E\subset X$,
the \emph{base} $b_p(E)$ is
the set of all points $x\in X$ so that $E$ is \emph{thick},
i.e.\ not thin, at $x$.
We also let $\clEp$ be the fine closure of $E$ and $\fineint E$ be the fine interior of $E$,
both taken with respect to the fine topology.

In the definition of thinness,
and in the sum \eqref{eq-sum-fine} below,
we make the convention that the integrand
is 1 whenever $\cp(B(x,r),B(x,2r))=0$.
This happens e.g.\ if $X=B(x,2r)$ is bounded,
but never e.g.\ if $r < \frac{1}{2}\diam X$.
Note that thinness is a local property, i.e.\
$E$ is thin at $x$ if and only if $E\cap B(x,\de)$ is thin at $x$,
where $\de >0$ is arbitrary.

\begin{deff}
A function $u : U \to \eR$, defined on a finely open set $U$, is
\emph{finely continuous}
if it is continuous when $U$ is equipped with the
fine topology and $\eR$ with the usual topology.
\end{deff}

Note that $u$ is finely continuous in $U$ if and only if
it is
finely continuous at every $x\in U$ in the sense that for all
$\eps>0$ there exists a finely open set $V\ni x$ such that
$|u(y)-u(x)|<\eps$ for all $y\in V$, if $u(x) \in \R$, and such that
$\pm u(y) > 1/\eps$ for all $y\in V$, if $u(x)=\pm \infty$,
or equivalently if and only if
the sets $\{x\in U:u(x)>k\}$ and $\{x\in U:u(x)<k\}$ are finely open
for all $k\in\R$.

Since every open set is finely open, 
the fine topology
generated by the
finely open\index{finely!open} sets is finer than the metric topology.
In fact, it is so fine that all superharmonic functions become
finely continuous. 
This is the content of the following theorem.

\begin{theorem}  \label{thm-superharm-p-fine}
Let $u$ be a superharmonic function in an open set\/ $\Om$.
Then $u$ is finely continuous in\/ $\Om$.
\end{theorem}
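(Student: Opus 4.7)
My plan is to reduce fine continuity at a point to thinness of super-level sets and then invoke a capacitary (Wolff-type) pointwise lower bound for superharmonic functions. The statement is essentially due to Kinnunen--Latvala~\cite{KiLa}, J.~Bj\"orn~\cite{JB-pfine} and Korte~\cite{korte08}, and the sketch below follows their strategy while using only the variational tools collected in Section~\ref{sect-superharm}.

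Fix $x_0\in\Om$. By the equivalent characterization of fine continuity recalled after the definition of finely continuous functions, it is enough to show that for every $k\in\R$ with $k>u(x_0)$ the set $E_k:=\{x\in\Om:u(x)\geq k\}$ is thin at $x_0$; the dual requirement that $\{u\leq k\}$ be thin at $x_0$ whenever $k<u(x_0)$ is immediate from the lower semicontinuity of $u$, since then a whole metric neighbourhood of $x_0$ is disjoint from $\{u\leq k\}$. In particular the case $u(x_0)=+\infty$ is already handled by lower semicontinuity, so I may assume $u(x_0)<\infty$ and fix a finite $k>u(x_0)$.

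After adding a constant I may assume $u\geq 0$ on some ball $B(x_0,R_0)\Subset\Om$. Set $B_j=B(x_0,2^{-j}R_0)$ and let $V_j$ be the lower semicontinuously regularized solution of the $\K_{k\chi_{E_k\cap B_j},0}(2B_j)$-obstacle problem, extended by zero outside $2B_j$. For any $M\geq k$ the truncation $u\wedge M$ is a bounded superharmonic, hence a superminimizer, on $2B_j$, and coincides with the lsc-regularized solution of its own $\K_{u\wedge M,u\wedge M}(2B_j)$-obstacle problem. Since $k\chi_{E_k\cap B_j}\leq u\wedge M$ q.e.\ and $(0-u\wedge M)_{+}=0$, the comparison principle (Lemma~\ref{lem-obst-le-reg}) yields $V_j\leq u\wedge M\leq u$ in $2B_j$. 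Applying the weak Harnack inequality (Theorem~\ref{theorem:WeakHarnack}) to $V_j$ together with a standard capacitary lower bound for its $L^q$-mean over $2B_j$ produces the single-scale estimate
\begin{equation*}
\essinf_{B_j} V_j \;\geq\; c\,k\left(\frac{\cp(E_k\cap B_j,2B_j)}{\cp(B_j,2B_j)}\right)^{1/(p-1)},
\end{equation*}
with $c>0$ independent of $j$.

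To turn this single-scale inequality into summability, I would iterate dyadically: at scale $j+1$, subtract the essential infimum obtained at scale $j$ and solve a new obstacle problem for the remainder, then sum the resulting capacitary contributions over $j$. This telescoping argument produces a Wolff-type pointwise lower bound
\begin{equation*}
u(x_0)=\essliminf_{x\to x_0}u(x) \;\geq\; c\,k\sum_{j=0}^{\infty}\left(\frac{\cp(E_k\cap B_j,2B_j)}{\cp(B_j,2B_j)}\right)^{1/(p-1)},
\end{equation*}
so that the dyadic series on the right converges. By a routine integral-to-sum comparison, convergence of this series is equivalent to thinness of $E_k$ at $x_0$ in the sense of~\eqref{deff-thin}, which completes the argument.

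The main obstacle is carrying out the dyadic iteration leading to the Wolff-type estimate: in the metric setting there is no $p$-Laplace equation and no Riesz-measure representation of $u$ to invoke, so each step must be executed by purely variational means, combining the comparison principle and the weak Harnack inequality on nested scales. All other ingredients---comparison with obstacles, the weak Harnack inequality, and the equivalence of the integral and dyadic-sum formulations of thinness---are already available from Sections~\ref{sect-prelim} and~\ref{sect-superharm}.
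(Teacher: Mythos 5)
The paper does not reprove this theorem (it is quoted from J.~Bj\"orn~\cite{JB-pfine} and Korte~\cite{korte08}), so the fair comparison is with those proofs and with the closely related machinery of Section~\ref{sect-weak-Cartan}. Your overall strategy is the right one and matches theirs: reduce fine continuity to thinness of $E_k=\{u\ge k\}$ at points with $u(x_0)<k$ (the reduction via lower semicontinuity is correct, including the case $u(x_0)=\infty$), dominate the level-set potentials by $u$ via truncation and the comparison principle (Lemma~\ref{lem-obst-le-reg}), and feed in the single-scale capacitary lower bound, which is exactly~\eqref{eq-est-pot-bdryB-inf} of Remark~\ref{rem-converse-with-inf}. (A technical point: Theorem~\ref{theorem:WeakHarnack} needs $50\lambda B\subset\Om$, so the dyadic balls should be replaced by $\sigma$-adic ones with $\sigma\ge 50\lambda$; Lemmas~\ref{lemma:capcomparison}, \ref{lem-cap-with-s-t} and~\ref{lem-wienersum} make this harmless.)

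The genuine gap is your final displayed estimate $u(x_0)\ge c\,k\sum_{j}a_j$ with $a_j=\bigl(\cp(E_k\cap B_j,2B_j)/\cp(B_j,2B_j)\bigr)^{1/(p-1)}$: this inequality is false, and the iteration you describe does not produce it. The contributions from successive scales do not add up linearly, because the gain at scale $j$ is proportional to the \emph{remaining} gap $k-\essinf_{B_j}u$, which shrinks; already $u\equiv k$ (every $a_j$ bounded below, right-hand side infinite, left-hand side $k$) shows the linear form cannot hold, and even in the regime $u(x_0)<k$ it would give the uniform bound $\sum_j a_j\le u(x_0)/ck\le 1/c$, which is incompatible with the product lower bound of Proposition~\ref{prop-u(x0)-inf-product} (one can make $\sum_j a_j$ arbitrarily large while keeping $u(x_0)<k$). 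What the telescoping actually yields, writing $m_j=\essinf_{B_j}u\uparrow u(x_0)$ and comparing $u-m_j\ge (k-m_j)w_j$ with $w_j$ the capacitary potential of $E_k\cap B_{j+1}$ in $B_j$, is the multiplicative recursion
\[
k-m_{j+1}\le (1-c\,a_j)(k-m_j),
\qquad\text{hence}\qquad
k-u(x_0)\le (k-m_0)\prod_{j=0}^{\infty}(1-c\,a_j),
\]
which is precisely the structure of the left-hand inequality in Proposition~\ref{prop-u(x0)-inf-product}. This weaker, correct form still finishes the proof: if $E_k$ were thick at $x_0$, then $\sum_j a_j=\infty$ by Lemma~\ref{lem-wienersum}, the product would vanish and $u(x_0)\ge k$, a contradiction; equivalently, summing the increments $m_{j+1}-m_j\ge c\,a_j(k-m_j)\ge c\,a_j(k-u(x_0))$ gives directly $\sum_j a_j\le (u(x_0)-m_0)/c(k-u(x_0))<\infty$. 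Replace your final estimate by this product (or increment-sum) form and the argument goes through.
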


By Theorem~\ref{thm-coarsest-intro}, which we prove in 
Section~\ref{sect-coarsest}, the fine topology
is the coarsest topology with this property.
Together with its consequence Theorem~\ref{thm-qcont-fine} below,
Theorem~\ref{thm-superharm-p-fine} was obtained
by J.~Bj\"orn~\cite{JB-pfine}, Theorems~4.4 and~4.6,
and independently by Korte~\cite{korte08}, Theorem~4.3 and Corollary~4.4,
(they can also be found in Bj\"orn--Bj\"orn~\cite{BBbook} as
Theorems~11.38 and~11.40).

\begin{thm} \label{thm-qcont-fine}
Let $\Om$ be open.
Then every quasicontinuous function
$u:\Om\to \eR$ is finely continuous at q.e.\
$x\in\Om$.
In particular, this is true for all $u\in\Np\loc(\Om)$.
\end{thm}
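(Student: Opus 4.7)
The plan is to deduce Theorem~\ref{thm-qcont-fine} from the fine continuity of superharmonic functions (Theorem~\ref{thm-superharm-p-fine}) via a capacitary potential construction. Since fine continuity at a point is a local property, I may assume $\Om$ is bounded and fix a bounded open $\Om'\supset\itoverline{\Om}$ with $\Cp(X\setm\Om')>0$, on which the obstacle problem theory of Section~\ref{sect-superharm} is available. The final sentence of the theorem reduces to the main statement, because functions in $\Np\loc(\Om)$ are quasicontinuous, as recalled after Definition~\ref{def.PI.}.

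I would begin by applying quasicontinuity to choose, for each $n\ge 1$, an open set $G_n\subset\Om$ with $\Cp(G_n)<2^{-np}$ such that $u|_{\Om\setm G_n}$ is finite and continuous. For each $n$, let $v_n$ be the lower semicontinuous regularization of the solution of the $\K_{\chi_{G_n},0}(\Om')$-obstacle problem. Then $v_n$ is superharmonic in $\Om'$ by the Kinnunen--Martio theory, $0\le v_n\le 1$ (compare with the admissible functions $\min\{v_n,1\}$ and $\max\{v_n,0\}$), and $v_n\equiv 1$ on the \emph{open} set $G_n$ since $v_n=1$ a.e.\ on $G_n$ combined with lower semicontinuity. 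The standard obstacle energy estimate together with Lemma~\ref{lemma:capcomparison} yields $\|v_n\|_{\Np(\Om')}^p\le C\,\Cp(G_n)<C\,2^{-np}$.

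Next, set $W_N=\sum_{n\ge N}v_n$ for each $N\ge 1$. Each $W_N$ is lsc and lies in $\Np(\Om')$ with $\|W_N\|_{\Np(\Om')}\to 0$ as $N\to\infty$; as a countable sum of nonnegative superminimizers that stays in $\Np\loc$, $W_N$ is itself a superminimizer (pass the variational inequality to the limit by Fatou's lemma), and its lower semicontinuous regularization $W_N^*$ is therefore superharmonic in $\Om'$ and equals $W_N$ q.e. Moreover $W_N^*\ge W_N\ge v_n\ge 1$ pointwise on $G_n$ for every $n\ge N$, and $\Np$-convergence of $W_N$ to $0$ forces $W_N^*\to 0$ q.e. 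Let $E$ be the capacity-zero exceptional set off which all $W_N^*$ are finite and $W_N^*(x)\to 0$, and on the complement of which $u$ is defined as $u|_{\Om\setm G_n}$ for some (hence all large) $n$. I claim $u$ is finely continuous at every $x\in\Om\setm E$. Given $\eps>0$, pick $N$ so large that $W_N^*(x)<\tfrac14$; by Theorem~\ref{thm-superharm-p-fine}, $W_N^*$ is finely continuous at $x$, so $\{y:W_N^*(y)<\tfrac12\}$ contains a finely open neighbourhood $U_0\ni x$. Since $W_N^*\ge 1$ on each $G_n$ with $n\ge N$, we have $U_0\cap G_N=\emptyset$. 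Continuity of $u|_{\Om\setm G_N}$ at $x$ yields $\de>0$ with $|u(y)-u(x)|<\eps$ on $B(x,\de)\cap(\Om\setm G_N)$, and then $B(x,\de)\cap U_0$ is a fine neighbourhood of $x$ on which $|u-u(x)|<\eps$.

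The main technical obstacles will be (i) justifying that the countable sum $W_N$ of nonnegative superminimizers in $\Np\loc$ is again a superminimizer whose lsc regularization is superharmonic, and (ii) promoting the obstacle inequality $v_n\ge 1$ from q.e.\ to pointwise on the open set $G_n$. Both rest on a careful interplay between the a.e.\ information supplied by the obstacle problem and lower semicontinuous regularization; once they are in place, the remaining argument is essentially topological and uses only Theorem~\ref{thm-superharm-p-fine}.
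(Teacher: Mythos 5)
Your route is genuinely different from the one behind this theorem in the literature (J.~Bj\"orn~\cite{JB-pfine}, Korte~\cite{korte08}, and the closely related proof of Theorem~\ref{thm-fine-quasicont-fine}\,(b) here), which proceeds via the identity $\Cp(\clGjp)=\Cp(G_j)$ and concludes that q.e.\ point lies outside $\bigcap_j\clGjp$; you instead go through superharmonic potentials and Theorem~\ref{thm-superharm-p-fine}. That strategy is admissible and not circular, but as written it contains one genuine gap: the claim that $W_N=\sum_{n\ge N}v_n$, ``as a countable sum of nonnegative superminimizers\ldots is itself a superminimizer (pass the variational inequality to the limit by Fatou's lemma)''. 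For $p\ne2$ the class of superminimizers is \emph{not} closed under addition --- this is exactly the failure of superposition in nonlinear potential theory --- so already the partial sums $\sum_{n=N}^{k}v_n$ need not be superminimizers, and there is no variational inequality for them that Fatou's lemma could pass to the limit. The convergence theorems that do hold (e.g.\ that an increasing limit of superharmonic functions is superharmonic if not identically $\infty$) do not apply, because the approximating partial sums are not superharmonic. Everything downstream of this step ($W_N^*$ superharmonic, hence finely continuous by Theorem~\ref{thm-superharm-p-fine}) rests on it.

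The gap is repairable by the device this paper uses in proving Proposition~\ref{prop-char-thin-cap-pos}: keep $W_N\in\Np_0(\Om')$ merely as a Newtonian function with $\|W_N\|_{\Np(X)}\to0$, and let $\widetilde{W}_N$ be the lower semicontinuously regularized solution of the $\K_{W_N,0}(\Om')$-obstacle problem. Then $\widetilde{W}_N$ is superharmonic, satisfies $\widetilde{W}_N\ge W_N\ge1$ q.e.\ (hence, after regularization, everywhere) on each open $G_n$ with $n\ge N$, and has energy at most that of $W_N$ since $W_N$ is itself admissible; Friedrichs' inequality then gives $\widetilde{W}_N\to0$ in $\Np(X)$, so a subsequence tends to $0$ q.e., which is all your final argument needs. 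With this substitution the rest of your proof is correct: the points you flag yourself (that $v_n\equiv1$ on the open set $G_n$, obtained from $v_n\ge1$ a.e.\ there together with lower semicontinuous regularization, and the bound $\|v_n\|_{\Np(X)}^p\le C\,\Cp(G_n)$) are handled properly. One small further caveat: if $X$ is bounded and $\itoverline{\Om}=X$, no bounded open $\Om'\supset\itoverline{\Om}$ with $\Cp(X\setm\Om')>0$ exists, so the localization should be to a countable cover of $\Om$ by balls $B$ with $\Cp(X\setm 2B)>0$ rather than to a single $\Om'$; the countably many exceptional sets still have capacity zero in total.
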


At the end of this section (when proving Theorem~\ref{thm-fine-quasicont-fine}), we
extend the first part of the above result to finely open
and quasiopen sets.

We next  give some auxiliary lemmas.
The following characterization was essentially obtained in
Bj\"orn--Bj\"orn~\cite{BBnonopen}.

\begin{lem}\label{lem-fine-intclosure}
Let $E\subset X$ and $x\in X$. Then $x\in\fineint E$ if and only if $x\in E$ and $X\setminus E$ is thin at $x$. Moreover, we have
$$
\clEp=E\cup b_p(E).
$$
\end{lem}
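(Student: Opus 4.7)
The plan is to first establish the identity $\clEp = E\cup b_p(E)$ and then deduce the characterization of $\fineint E$ from it, with essentially all of the nontrivial work concentrated in one capacitary comparison borrowed from Bj\"orn--Bj\"orn~\cite{BBnonopen}.

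For the inclusion $E\cup b_p(E)\subset\clEp$, the containment $E\subset\clEp$ is immediate from the definition of fine closure, and if some $y\in b_p(E)$ were not in $\clEp$, there would exist a finely open set $V\ni y$ with $V\cap E=\emptyset$; then $E\subset X\setminus V$, and since $X\setminus V$ is thin at $y$ by the very definition of finely open, monotonicity of thinness under set inclusion (clear from the Wiener integral \eqref{deff-thin} and the fact that $\cp(\cdot,B(y,2r))$ is monotone) would contradict $y\in b_p(E)$.

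For the reverse inclusion I would show that $V:=X\setminus(E\cup b_p(E))$ is itself finely open. For each $y\in V$, $E$ is thin at $y$ by definition, and the key remaining step is that $b_p(E)$ is then also thin at $y$, so that the union $E\cup b_p(E)$ is thin at $y$. This ``idempotency'' of the base operator---equivalently $b_p(b_p(E))\subset b_p(E)$, i.e.\ the statement that the fine closure operation is idempotent---is the technical content I would borrow from \cite{BBnonopen}; it rests on a capacitary comparison between $b_p(E)\cap B(y,r)$ and $E$ on slightly enlarged concentric balls, inserted into the Wiener-type integral of \eqref{deff-thin}, using the doubling property and Lemma~\ref{lemma:capcomparison}. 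Granted this, $V$ is finely open, is contained in $X\setminus E$, and contains every $y\notin E\cup b_p(E)$, so no such $y$ can lie in $\clEp$; hence $\clEp\subset E\cup b_p(E)$.

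With the fine closure formula in hand the first assertion is immediate. If $x\in\fineint E$, then $\fineint E$ is itself a finely open subset of $E$ containing $x$, so the inclusion $X\setminus E\subset X\setminus\fineint E$ and monotonicity of thinness show that $X\setminus E$ is thin at $x$. Conversely, if $x\in E$ and $F:=X\setminus E$ is thin at $x$, then by what was just proved $x\notin F\cup b_p(F)=\itoverline{F}^p$, so $X\setminus\itoverline{F}^p$ is a finely open subset of $E$ containing $x$, whence $x\in\fineint E$. The main obstacle is precisely the capacitary comparison underlying the idempotency of $b_p$ at points of thinness, since every other step reduces to soft manipulation of the definitions.
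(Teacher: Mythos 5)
Your argument is correct, but it runs in the opposite direction from the paper's. The paper simply quotes the characterization of fine interior points (Proposition~7.8 in \cite{BBnonopen}) and then obtains $\clEp=E\cup b_p(E)$ in one line by complementation, since $x\notin\clEp$ exactly when $x$ is a fine interior point of $X\setminus E$. You instead prove the closure formula first and deduce the interior characterization from it; your reduction steps (monotonicity of thinness, finite subadditivity of thinness via subadditivity of $\cp$, and the final duality argument) are all sound. The entire weight of your proof sits on the claim that $b_p(E)$ is thin at every point where $E$ is thin, which is genuinely nontrivial: it is essentially the statement that $E$ and its fine closure have comparable variational capacities in concentric balls, i.e.\ a localized version of Lemma~\ref{lemma-capfineclosure} (or Corollary~4.5 in \cite{JB-pfine}), fed into the Wiener integral \eqref{deff-thin} with the help of Lemma~\ref{lem-cap-with-s-t} and the locality of thinness. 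So you have not made the proof more elementary -- you have swapped one imported black box for another of comparable depth. Two cautions if you want to make this self-contained within the present paper: (i) Lemma~\ref{lemma-capfineclosure} is stated \emph{after} the lemma you are proving, so you must check there is no circularity (there is none -- its proof rests on Theorem~\ref{thm-qcont-fine}, not on this lemma); (ii) the localization needs a little care, since $\clEp\cap B(y,r)$ is controlled by the fine closure of $E\cap B(y,2r)$ rather than of $E\cap B(y,r)$, which is exactly why the ``slightly enlarged concentric balls'' you mention are needed. What your route buys is a direct, closure-first picture that makes the Kuratowski idempotency of $b_p$ explicit; what the paper's route buys is brevity, since the cited interior characterization already encodes that idempotency.
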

\begin{proof}
For the characterization of fine interior points, see
Proposition~7.8 in \cite{BBnonopen}.
Accordingly, $x\notin\clEp$ if and only if $x$ is a fine interior point
of $X\setminus E$, i.e.\ $x\notin E$ and $E$ is thin at $x$.
Thus $x\in\clEp$ if and only if $x\in E$ or $E$ is thick at $x$.
\end{proof}

In Sections~\ref{sect-weak-Cartan} and \ref{sect-coarsest} we
will use the fact that the integral~\eqref{deff-thin} can be replaced
by a sum and the factor 2 in~\eqref{deff-thin}
can be replaced
by an arbitrary factor greater than 1. To prove this (see Lemma~\ref{lem-wienersum}), we need the following simple lemma, whose proof can be found e.g.\ in Bj\"orn--Bj\"orn~\cite{BBbook}, Lemma~11.22.

\begin{lem} \label{lem-cap-with-s-t}
Let $B=B(x_0,r)$ and $E\subset B$.
Then for every\/ $1<\tau<t<\tfrac{1}{4}\diam X$\textup,
$$
      \cp(E,tB) \le \cp(E,\tau B)
           \le C \biggl( 1 + \frac{t^p}{(\tau-1)^p} \biggr) \cp(E,tB).
$$
\end{lem}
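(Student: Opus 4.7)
\textbf{Proof plan for Lemma~\ref{lem-cap-with-s-t}.}

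The left inequality is immediate: any $u\in\Np(X)$ that is admissible for $\cp(E,\tau B)$ (i.e.\ $u\ge 1$ q.e.\ on $E$ and $u=0$ q.e.\ outside $\tau B$) is automatically admissible for $\cp(E,tB)$ because $\tau B\subset tB$, so the infimum can only decrease when we enlarge the ambient set. This uses only the definition, together with Remark~\ref{rmk-cp}.

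For the nontrivial right inequality I would use the standard cutoff-and-truncation technique. Take an arbitrary $u$ admissible for $\cp(E,tB)$, so $u=0$ q.e.\ outside $tB$ and $u\ge 1$ q.e.\ on $E\subset B$. Define the Lipschitz cutoff
\[
\eta(x)=\min\biggl\{1,\max\Bigl\{0,\frac{\tau r-d(x,x_0)}{(\tau-1)r}\Bigr\}\biggr\},
\]
which equals $1$ on $B$, vanishes outside $\tau B$, and has upper gradient $g_\eta=(1/((\tau-1)r))\chi_{\tau B\setminus B}$. Then $v:=\eta u\in \Np(X)$ satisfies $v\ge 1$ q.e.\ on $E$ and $v=0$ q.e.\ outside $\tau B$, so $v$ is admissible for $\cp(E,\tau B)$.

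The key computation is to control $g_v$. By the Leibniz rule for minimal \p-weak upper gradients ($g_{\eta u}\le \eta g_u+u g_\eta$) together with the elementary inequality $(a+b)^p\le 2^{p-1}(a^p+b^p)$, one obtains
\[
\int_X g_v^p\,d\mu
\le 2^{p-1}\int_{tB} g_u^p\,d\mu
+\frac{2^{p-1}}{((\tau-1)r)^p}\int_{tB} u^p\,d\mu.
\]
Here I would invoke the Sobolev-type (Friedrichs) inequality for functions in $\Np_0(tB)$, which under the running assumptions (completeness, doubling, \p-Poincar\'e) and the size restriction $tr<\tfrac14\diam X$ yields
$\int_{tB}u^p\,d\mu\le C(tr)^p\int_{tB}g_u^p\,d\mu$ (the factor $tr$ coming from the radius of the ambient ball). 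Substituting and simplifying produces
\[
\int_X g_v^p\,d\mu\le C\Bigl(1+\tfrac{t^p}{(\tau-1)^p}\Bigr)\int_{tB}g_u^p\,d\mu.
\]
Taking the infimum over all admissible $u$ yields the desired estimate on $\cp(E,\tau B)$.

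The main obstacle is really the Friedrichs/Sobolev inequality for $\Np_0(tB)$: this is where the global hypotheses (doubling plus \p-Poincar\'e, together with the smallness condition involving $\diam X$ so that $tB$ does not fill up the space) enter. Everything else (Leibniz rule, the cutoff, the admissibility bookkeeping via Remark~\ref{rmk-cp}) is routine. Since the lemma is being cited from \cite{BBbook}, I would expect the author's proof either to invoke the Friedrichs inequality directly or to bypass it by testing against $\eta$ itself and using a capacity comparison for balls, but the cutoff argument sketched above is the most transparent route and yields the explicit constant $C(1+t^p/(\tau-1)^p)$.
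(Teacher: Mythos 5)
Your argument is correct: the left inequality is the trivial monotonicity in the ambient set, and the right one follows from the cutoff $\eta u$, the Leibniz rule for minimal \p-weak upper gradients, and the Sobolev--Friedrichs inequality for $\Np_0(tB)$ (which is where the restriction on $t$ relative to $\diam X$ enters, guaranteeing $\Cp(X\setm tB)>0$ with a constant of order $(tr)^p$). The paper does not reprove this lemma but cites it from \cite{BBbook}, Lemma~11.22, whose proof is essentially this same cutoff-and-truncation argument, so your route matches the intended one.
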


\begin{lemma}
\label{lem-wienersum}
Let $E\subset X$, $x\in X$, $r_0>0$ and $\sigma >1$. Then $E$ is thin at $x$ if
and only if
\begin{equation} \label{eq-sum-fine}
\sum_{j=1}^{\infty}\biggl(\frac{\cp(E\cap B(x,\sigma^{-j}r_0),B(x,\sigma^{1-j}r_0))}
{\cp(B(x,\sigma^{-j}r_0),B(x,\sigma^{1-j}r_0))}\biggr)^{1/(p-1)}<\infty.
\end{equation}
\end{lemma}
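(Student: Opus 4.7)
The plan is to prove the equivalence by partitioning $(0,r_0]$ into the $\sigma$-adic intervals $I_j=(r_j,r_{j-1}]$, where $r_k=\sigma^{-k}r_0$, and comparing the integrand of (\ref{deff-thin}) on each $I_j$ to the $j$-th term of the sum (\ref{eq-sum-fine}). All constants may depend on $\sigma$, $p$, and the doubling and Poincar\'e constants.

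First, by Lemma~\ref{lem-cap-with-s-t}, the factor $2$ in (\ref{deff-thin}) may be replaced by $\sigma$ without affecting convergence: for $F\subset B(x,r)$, the capacities $\cp(F,B(x,2r))$ and $\cp(F,B(x,\sigma r))$ are comparable. Second, by Lemma~\ref{lemma:capcomparison} and doubling, $\cp(B(x,r),B(x,\sigma r))$ is comparable to $\mu(B(x,r))/r^p$, which varies by at most a bounded factor over $I_j$ and is comparable to the denominator $\cp(B(x,r_j),B(x,r_{j-1}))$ of the $j$-th summand. For the numerator, monotonicity of $\cp$ in the inner set and Lemma~\ref{lem-cap-with-s-t} applied with base ball $B(x,r_j)$ (where the outer-ball scaling factors $\sigma r/r_j\in[\sigma,\sigma^2]$ stay in a compact subset of $(1,\infty)$, so the comparison constant is uniformly bounded) give the lower bound
\[
\cp(E\cap B(x,r),B(x,\sigma r))\ge c\,\cp(E\cap B(x,r_j),B(x,r_{j-1}))\quad\text{for }r\in I_j.
\]
Integrating over $I_j$ against $dr/r$ (of total mass $\log\sigma$), taking $1/(p-1)$-th powers and summing over $j$ then yields: integral (\ref{deff-thin}) finite $\Rightarrow$ sum (\ref{eq-sum-fine}) finite. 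The reverse implication is obtained by bounding the integrand at $r\in I_j$ from above in terms of boundedly many consecutive terms of the sum.

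The main technical obstacle is this reverse (upper) bound. The naive estimate $\cp(E\cap B(x,r),B(x,\sigma r))\le \cp(E\cap B(x,r_{j-1}),B(x,\sigma r))$, combined with Lemma~\ref{lem-cap-with-s-t} using base $B(x,r_{j-1})$, involves outer-ball scaling factors $\sigma r/r_{j-1}\in(1,\sigma]$, and the comparison constant blows up as this ratio approaches $1$ (i.e.\ as $r\to r_j^+$). This is circumvented by subdividing $I_j$ into subintervals on which the scaling factor stays uniformly bounded away from $1$, and matching the integrand at $r$ against the sum term at an appropriately shifted index (e.g.\ the $(j-1)$-th or $(j-2)$-th), so that each invocation of Lemma~\ref{lem-cap-with-s-t} involves ball ratios in a compact subset of $(1,\infty)$. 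After absorbing the boundedly many neighbouring terms, one obtains the required upper bound; the tail $\int_{r_0}^1$ in (\ref{deff-thin}) contributes only a finite amount since the integrand is bounded by $1$, and the claimed equivalence follows.
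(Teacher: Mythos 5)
Your proposal is correct and takes essentially the same route as the paper: a $\sigma$-adic decomposition of $(0,r_0]$, with the integrand on each interval sandwiched between neighbouring summands by means of Lemmas~\ref{lem-cap-with-s-t} and~\ref{lemma:capcomparison} and the doubling property. The only difference is in the upper bound, where the paper avoids your subdivision of the intervals by first enlarging the outer ball from $B(x,2r)$ to $B(x,\sigma\rho)$ with Lemma~\ref{lem-cap-with-s-t} anchored at $B(x,r)$ (so that all ratios stay in a compact subset of $(1,\infty)$) and only afterwards enlarging the inner set to $E\cap B(x,\rho)$, yielding
$\tfrac1C\cp(E\cap B(x,\rho/\sigma),B(x,\rho))\le\cp(E\cap B(x,r),B(x,2r))\le C\cp(E\cap B(x,\rho),B(x,\sigma\rho))$
for all $r\in[\rho/\sigma,\rho]$ in one stroke.
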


\begin{proof}
Let $B_s=B(x,s)$ for $ s >0$.
Let $\rho < \tfrac{1}{8}\diam X$ and $\rho/\sigma \le r \le \rho$.
Then, by Lemma~\ref{lem-cap-with-s-t}
and the monotonicity of the capacity,
\[
   \frac{1}{C} \cp(E \cap B_{\rho/\sigma},B_{\rho})
   \le
\     \cp(E \cap B_r,B_{2r})
   \le C \cp(E \cap B_\rho,B_{\sigma \rho}),
\]
which together with the doubling property of $\mu$ and
Lemmas~\ref{lemma:capcomparison} and~\ref{lem-cap-with-s-t} shows that
\begin{align*}
\frac{1}{C}
          \biggl(\frac{\cp(E\cap B_{\rho/\sigma},B_{\rho})}{\cp(B_{\rho/\sigma},B_{\rho})}
     \biggr)^{1/(p-1)}
     &\le              \int_{\rho/\sigma}^\rho
          \biggl(\frac{\cp(E\cap B_r,B_{2r})}{\cp(B_r,B_{2r})}\biggr)^{1/(p-1)}
   \frac{dr}{r} \\
   & \le  C\biggl(\frac{\cp(E\cap B_\rho,B_{\sigma\rho})}{\cp(B_\rho,B_{\sigma\rho})}
     \biggr)^{1/(p-1)}.
\end{align*}
Hence \eqref{deff-thin} converges if and only if \eqref{eq-sum-fine} converges.
\end{proof}

\begin{lemma} \label{lem-HeKiMa-12.11}
Let $E\subset X$ be thin at $x\in\itoverline{E}\setminus E$. Then there is an
open neighbourhood $G$ of $E$ such that $G$ is thin at $x$ and $x\notin G$.
\end{lemma}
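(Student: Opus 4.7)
The strategy is a dyadic shell decomposition around $x$ combined with the outer capacity property (Theorem~\ref{thm-outercap-cp}), whose thinness is then verified via the sum form of the Wiener test (Lemma~\ref{lem-wienersum}).

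Fix $r_0 < \tfrac{1}{10}\diam X$ and let $B_j = B(x, 2^{-j} r_0)$. By Lemma~\ref{lem-wienersum}, the thinness of $E$ at $x$ is equivalent to
\[
\sum_{j=0}^\infty b_j < \infty, \qquad b_j := \biggl(\frac{\cp(E\cap B_j,\, 2B_j)}{\cp(B_j,\, 2B_j)}\biggr)^{1/(p-1)}.
\]
First I would partition $E$ into the compact shells $E_j := E\cap(\overline{B_j}\setminus B_{j+1})$ for $j\ge 0$, together with the far piece $E\setminus\overline{B_0}$. Since $x\notin E$ and $X$ is proper, each $E_j$ is compactly contained in the open annulus $2B_j\setminus\overline{B_{j+2}}$, which avoids $x$. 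Applying Theorem~\ref{thm-outercap-cp} to the compact set $E\cap\overline{B_j}\Subset 2B_j$ produces an open $U_j$ with $E\cap\overline{B_j}\subset U_j\Subset 2B_j$ and $\cp(U_j,2B_j)\le 2\cp(E\cap\overline{B_j},2B_j)\le 2\cp(E\cap 2B_j,2B_j)$. Setting $V_j := U_j\setminus\overline{B_{j+2}}$ gives an open set that still contains $E_j$ and avoids $x$. Together with $V_{-1}:=X\setminus\overline{B_1}$, which covers $E$ away from $x$, the union
\[
G = V_{-1}\cup\bigcup_{j\ge 0} V_j
\]
is an open neighbourhood of $E$ with $x\notin G$.

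It remains to verify that $G$ is thin at $x$, again using Lemma~\ref{lem-wienersum}. For $j$ large, $V_{-1}\cap B_j=\varnothing$, and $V_k\subset 2B_k\setminus\overline{B_{k+2}}$ can meet $B_j$ only when $k\ge j-1$. Subadditivity yields
\[
\cp(G\cap B_j,\,2B_j)\le\sum_{k\ge j-1}\cp(V_k\cap B_j,\,2B_j).
\]
For $k\ge j$ one has $2B_k\subset 2B_j$, and extension by zero gives $\cp(V_k,2B_j)\le\cp(V_k,2B_k)\le 2\cp(E\cap 2B_k,2B_k)$; the boundary term $k=j-1$ is handled analogously using Lemma~\ref{lem-cap-with-s-t} to pass between $B_{j-1}$ and $B_{j-2}=2B_{j-1}$. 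Thus
\[
\cp(G\cap B_j,\,2B_j)\le C\sum_{k\ge j-1}\cp(E\cap 2B_k,\,2B_k).
\]

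\textbf{Main obstacle.} The delicate part is to conclude from this term-by-term bound that the Wiener sum for $G$ converges, because dividing through by $\cp(B_j,2B_j)$ introduces ratios $\cp(B_k,2B_k)/\cp(B_j,2B_j)$ which, by Lemma~\ref{lemma:capcomparison}, can grow geometrically in $k-j$; naive summation therefore blows up the inner tail. I would overcome this by refining the construction: either (i) choose the outer approximations in the second step with quantitative errors $\varepsilon_j\searrow 0$ decaying sufficiently fast and apply a Minkowski/Fubini rearrangement in the $\ell^{1/(p-1)}$ scale, or (ii) replace the ad hoc sets $V_j$ by super-level sets of (l.s.c.\ regularized) capacitary potentials of $E\cap\overline{B_j}$ in $2B_j$, whose energies combine with the correct scaling across dyadic levels. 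In either implementation the double sum collapses to a constant multiple of $\sum_j b_j$, which is finite by hypothesis; this yields thinness of $G$ at $x$ and completes the proof.
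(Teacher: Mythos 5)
Your construction gets the easy parts right (dyadic shells, outer capacity, openness, $x\notin G$), but the step you yourself flag as the ``main obstacle'' is a genuine gap, and neither of your proposed repairs closes it. The problem is structural, not an artefact of the approximation errors: even if you could take $U_k=E\cap\itoverline{B}_k$ exactly, your set $G$ near $x$ is the \emph{union} $\bigcup_{k\ge j-1}V_k$ of outer approximations of the overlapping sets $E\cap\itoverline{B}_k$, so the best you can say is
\[
\cp(G\cap B_j,2B_j)\lesssim\sum_{k\ge j-1}\cp(E\cap 2B_k,2B_k),
\]
and after dividing by $\cp(B_j,2B_j)$ and raising to the power $1/(p-1)$ you must sum a tail in $k$ weighted by $\cp(B_k,2B_k)/\cp(B_j,2B_j)$. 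In unweighted $\R^n$ with $p<n$ these ratios decay geometrically and a Fubini rearrangement works (at least for $p\ge2$; for $p<2$ you additionally need a discrete Hardy-type inequality because $t\mapsto t^{1/(p-1)}$ is no longer subadditive). But in the present generality the ratios need not decay at all: by Lemma~\ref{lemma:capcomparison}, $\cp(B_k,2B_k)\approx\mu(B_k)2^{kp}$, and for instance when $\Cp(\{x\})>0$ one has $\cp(B_k,2B_k)\to\cp(\{x\},2B_j)>0$, so $\sum_{j\le k}\bigl(\cp(B_k,2B_k)/\cp(B_j,2B_j)\bigr)^{1/(p-1)}\asymp k$ and the rearranged sum becomes $\sum_k k\,b_k$, which is not controlled by $\sum_k b_k<\infty$. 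Your fix (i) only shrinks the $\varepsilon_j$-part of the bound, not the dominant tail; fix (ii) is not an argument.

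The missing idea is to make each dyadic scale pay for only \emph{one} outer approximation. The paper does this by nesting: after choosing open $G_j\supset E\cap\itoverline{B}_j$ with normalized capacity exceeding that of $E\cap\itoverline{B}_j$ in $2B_j$ by at most $2^{-j}$ (note: an \emph{additive} error on the normalized ratio, which also handles the case of zero capacity, unlike your multiplicative bound), it sets
\[
G=(X\setm\itoverline{B}_1)\cup(G_1\setm\itoverline{B}_2)\cup\bigl((G_1\cap G_2)\setm\itoverline{B}_3\bigr)\cup\cdots,
\]
so that $G\cap B_j\subset G_j$ for every $j$: the pieces indexed by $k<j$ miss $B_j$ entirely, and the pieces indexed by $k\ge j$ are contained in $G_j$ by construction. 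The Wiener sum for $G$ is then bounded term by term by that for $E$ (up to a constant and the convergent error series $\sum_j2^{-j}$), with no double sum to collapse. If you replace your union of truncated $V_k$'s by this cumulative intersection, your argument goes through.
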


\begin{proof} Let $B_j=B(x,2^{-j})$, $j=1,2,\ldots$\,. By Lemma~\ref{lem-cap-with-s-t},
\[
\cp(E \cap \itoverline{B}_j ,2B_j) \le C \cp(E \cap \itoverline{B}_j ,4B_j)
    \le C \cp(E \cap 2B_j ,4B_j).
\]
Since the variational capacity is an outer capacity, by Theorem~\ref{thm-outercap-cp},
we can find open sets $G_j \supset E \cap \itoverline{B}_j $ such that
\[
   \biggl( \frac{\cp(G_j,2B_j)}{\cp(B_j,2B_j)}\biggr)^{1/(p-1)}\le
\biggl( \frac{\cp(E \cap \itoverline{B}_j,2B_j)}{\cp(B_j,2B_j)}\biggr)^{1/(p-1)} + 2^{-j}.
\]
Let
\[
     G = (X \setm \itoverline{B}_1) \cup (G_1 \setm \itoverline{B}_2)
        \cup           ((G_1 \cap G_2) \setm \itoverline{B}_3)
        \cup           ((G_1 \cap G_2  \cap G_3) \setm \itoverline{B}_4)
        \cup \ldots.
\]
Then $G$ is open and contains $E$, and $x \notin G$.
Moreover $G \cap B_j \subset G_j$ and thus, by combining the estimates and using
Lemmas~\ref{lemma:capcomparison} and~\ref{lem-wienersum},
\[
\sum_{j=1}^{\infty}\biggl(\frac{\cp(G\cap B_j,2B_j)}{\cp(B_j,2B_j)}\biggr)^{1/(p-1)}
\le  C \sum_{j=1}^{\infty}\biggl(\frac{\cp(E \cap 2B_j ,4B_j)}
{\cp(2B_j,4B_j)}\biggr)^{1/(p-1)} +1<\infty.
\]
Hence the claim follows from Lemma~\ref{lem-wienersum}.
\end{proof}

Theorem~\ref{thm-qcont-fine} can be used to prove
the following generalization of Corollary~4.5 in
J.~Bj\"orn~\cite{JB-pfine} (which can also be found as Corollary~11.39 in \cite{BBbook}),
where \eqref{cap-p-fine-closure-A} was obtained for bounded open $A$
with $\Cp(X \setm A)>0$ and $E \Subset A$.
There is also an intermediate version in
Bj\"orn--Bj\"orn~\cite{BBvarcap}, Corollary~4.7.
In \cite{JB-pfine}, Corollary~4.5 was used to obtain Theorem~\ref{thm-qcont-fine}.
Here we instead use Theorem~\ref{thm-qcont-fine} to obtain
Lemma~\ref{lemma-capfineclosure}, i.e.\ to improve Corollary~4.5
from~\cite{JB-pfine}.

\begin{lemma}\label{lemma-capfineclosure}
If $E\subset X$, then
$
\Cp(\clEp)= \Cp(E).
$
Moreover, if $E\subset A$, then
\begin{equation}
\cp(E,A) = \cp(\clEp \cap A,A).
\label{cap-p-fine-closure-A}
\end{equation}
If furthermore $\cp(E,A)<\infty$, then $\Cp(\clEp \setm \fineint A)=0$ and
\begin{equation}
\cp(E,A) = \cp(\clEp \cap A,A) = \cp(\clEp \cap \fineint A, \fineint A).
\label{cap-p-fine-closure}
\end{equation}
\end{lemma}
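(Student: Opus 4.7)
The plan is to leverage Theorem~\ref{thm-qcont-fine}, which makes every $u\in\Np(X)$ finely continuous q.e.\ in $X$. The driving observation is that any such $u$ admissible for $\Cp(E)$ automatically satisfies $u\ge1$ q.e.\ on the fine closure $\clEp$, not merely on $E$. Indeed, if $u$ is finely continuous at some $x_0$ with $u(x_0)<1$, then $V:=\{y:u(y)<1\}$ is a fine neighbourhood of $x_0$, while the exceptional set $N:=E\cap\{u<1\}$ has zero Sobolev capacity; by Lemma~\ref{lemma:capcomparison} it then satisfies $\cp(N\cap B,2B)=0$ for every ball $B$, so $N$ is thin at every point. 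Since $E\subset(X\setm V)\cup N$ and a finite union of sets thin at $x_0$ is again thin there (via subadditivity of $\cp$ and the elementary inequality $(a+b)^{1/(p-1)}\le C(a^{1/(p-1)}+b^{1/(p-1)})$), $E$ is thin at $x_0$. Combined with $x_0\notin E$, Lemma~\ref{lem-fine-intclosure} gives $x_0\notin\clEp$. Contrapositively, $u\ge1$ q.e.\ on $\clEp$, yielding $\Cp(\clEp)\le\Cp(E)$; the reverse inequality is monotonicity. The same argument, applied to $u\in\Np(X)$ admissible for $\cp(E,A)$ (so that additionally $u=0$ q.e.\ outside $A$), establishes \eqref{cap-p-fine-closure-A}.

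Assume now that $\cp(E,A)<\infty$ and fix an admissible $u\in\Np(X)$ with $\int g_u^p\,\dmu<\infty$. On $\clEp\setm A$ the requirements $u=0$ q.e.\ and $u\ge1$ q.e.\ are incompatible, forcing $\Cp(\clEp\setm A)=0$. Next, for any finely continuous $x_0\in A\setm\fineint A$ with $u(x_0)\ne0$, the fine neighbourhood $W:=\{y:|u(y)|>|u(x_0)|/2\}$ of $x_0$ meets $X\setm A$ only in $\{u\ne0\}\cap(X\setm A)$, which has zero capacity and hence is thin at $x_0$. Therefore $X\setm A\subset(X\setm W)\cup(W\cap\{u\ne0\})$ is thin at $x_0$, forcing $x_0\in\fineint A$, a contradiction. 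Consequently $u=0$ q.e.\ on $A\setm\fineint A$, which together with $\Cp(\clEp\setm A)=0$ gives $\Cp(\clEp\setm\fineint A)=0$.

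For the final identity in \eqref{cap-p-fine-closure}, the step just carried out shows that $u$ already lies (after modification on a capacity-zero set) in $\Np_0(\fineint A)$ and is admissible for $\cp(\clEp\cap\fineint A,\fineint A)$, giving $\cp(\clEp\cap\fineint A,\fineint A)\le\cp(E,A)$. Conversely, any admissible $v\in\Np(X)$ for $\cp(\clEp\cap\fineint A,\fineint A)$ satisfies $v=0$ q.e.\ outside $A$ (because $\fineint A\subset A$) and $v\ge1$ q.e.\ on $E$, since $E\cap\fineint A\subset\clEp\cap\fineint A$ while $E\setm\fineint A\subset\clEp\setm\fineint A$ has capacity zero. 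The main obstacle will be this bookkeeping of capacity-zero exceptional sets, especially the use of fine continuity at points of $A\setm\fineint A$ to force $u=0$ q.e.\ there; once this is in place, both halves of \eqref{cap-p-fine-closure} follow because the two admissible classes differ only by sets of capacity zero.
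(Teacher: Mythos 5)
Your proof is correct and follows essentially the same route as the paper's: both rest on Theorem~\ref{thm-qcont-fine} together with the observation that a function admissible for $E$ (respectively vanishing q.e.\ outside $A$) remains, by q.e.\ fine continuity, $\ge 1$ q.e.\ on $\clEp$ (respectively $0$ q.e.\ outside $\fineint A$), so the competing admissible classes differ only by capacity-zero sets. The paper argues this directly via fine limits where you argue contrapositively through thinness of unions, but the content is the same.
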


\begin{proof} The inequality $\Cp(\clEp)\le\Cp(E)$ follows since any
$v\in N^{1,p}(X)$ admissible for the capacity $\Cp(E)$ is also admissible
for the capacity $\Cp(\clEp)$.
Indeed, if $x\in\clEp$ is a fine continuity point of $v$, then
\[
v(x)=\finelim_{y\to x}v(y)\ge 1.
\]
Since q.e.\ point in $X$ is a fine continuity point for $v\in N^{1,p}(X)$,
by Theorem~\ref{thm-qcont-fine},
we conclude that $v\ge 1$ q.e.\ in  $\clEp$.
The converse inequality is trivial.

Similarly, if $u\in\Np_0(A)$ is admissible for $\cp(E,A)$ then $u\ge1$
q.e.\ in $\clEp$ and $u=0$ q.e.\ in $X\setm\fineint A$.
This proves the nontrivial inequality
in \eqref{cap-p-fine-closure},  and also
that  $\Cp(\clEp \setm \fineint A)=0$ if there exists such a $u$.
Finally, \eqref{cap-p-fine-closure-A}
is trivial if $\cp(E,A)=\infty$.
\end{proof}

As a main consequence of Lemma~\ref{lemma-capfineclosure}, 
we end this section by proving Theorem~\ref{thm-fine-quasicont-fine}.

\begin{proof}[Proof of Theorem~\ref{thm-fine-quasicont-fine}.]
(a) For each $j=1,2,\ldots$, find an open set $G_j$ with
$\Cp(G_j)<2^{-j}$ so that $U\cup G_j$  is open.
By Lemma~\ref{lemma-capfineclosure}, we have $\Cp(\clGjp)=\Cp(G_j)<2^{-j}$.
Let $E:=U\cap\bigcap_{j=1}^\infty \clGjp$.
Then $\Cp(E)=0$. Moreover,
\[
    V_j:=U \setm \clGjp = (U \cup G_j) \setm \clGjp
\]
is finely open, and thus $V:=\bigcup_{j=1}^\infty V_j = U \setm E$ is finely open.

(b) By (a) 
we may assume that $U=V \cup E$, where $V$ is finely open and $\Cp(E)=0$.
As $u$ is quasicontinuous, we can for each $j=1,2,\ldots$
find an open set $G_j$ with $\Cp(G_j)<2^{-j}$
so that $u|_{V\setminus G_j}$ is continuous.
By Lemma~\ref{lemma-capfineclosure}, we have
$\Cp(\clGjp)=\Cp(G_j)<2^{-j}$. Hence the set
\[
A:=E \cup \biggl(V\cap\bigcap_{j=1}^\infty \clGjp \biggr)
\]
is of capacity zero.
If $x\in U\setminus A$,
then $x$ belongs to the finely open set
$V\setminus \clGkp$ for some $k$,
and the fine continuity of $u$ at $x$ follows from the continuity of
$u|_{V\setm G_k}$ since
the fine topology is finer than the metric topology.
\end{proof}

\section{The weak Cartan property}
\label{sect-weak-Cartan}

Our aim in this section is to obtain the following
\emph{weak Cartan property}.

\begin{thm}\textup{(Weak Cartan property)} \label{thm-weak-Cartan}
Assume that $E$ is thin at $x_0\notin E$. Then there exist  a ball $B$ centred at $x_0$
and superharmonic functions $u,u'\in\Np(B)$ such that
\[
0\le u\le 1,\quad  0\le u'\le 1, \quad
u(x_0)<1, \quad u'(x_0)<1 \quad \text{and}  \quad
E\cap  B \subset F\cup F',
\]
where $F=\{x\in B:u(x)=1\}$ and $F'=\{x\in B:u'(x)=1\}$.

In particular, with $v=\max\{u,u'\}$ we have $v(x_0)<1$ and $v=1$ in
$E\cap B$.
\end{thm}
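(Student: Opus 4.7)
The plan is to replace $E$ with a thin open neighbourhood, split the dyadic scales about $x_0$ into even and odd families, and construct $u$ and $u'$ as lower semicontinuously regularized solutions of two obstacle problems built from these two families. First, Lemma~\ref{lem-HeKiMa-12.11} furnishes an open set $G\supset E$ with $x_0\notin G$ that is still thin at $x_0$. Writing $B_j:=B(x_0,2^{-j}r_0)$ for a fixed $r_0>0$ with $B_0\Subset X$, Lemma~\ref{lem-wienersum} with $\sigma=2$ yields
\[
    \sum_{j=0}^{\infty} a_j < \infty,
    \qquad a_j := \biggl(\frac{\cp(G\cap B_j, 2B_j)}{\cp(B_j, 2B_j)}\biggr)^{1/(p-1)}.
\]
By perturbing the radii slightly, we may assume each sphere $\{d(\cdot,x_0)=2^{-j}r_0\}$ is $\mu$-null, which leaves the relevant capacities unchanged.

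The heart of the argument is a Wolff-type pointwise bound for single-annulus capacitary potentials. For each $j$, let $v_j\in \Np_0(2B_j)$ be the lower semicontinuously regularized solution of the obstacle problem $\K_{\chi_{G\cap \itoverline{B}_j},0}(2B_j)$, extended by $0$ outside $2B_j$. A truncation argument (comparing $v_j$ with $\min\{v_j,1\}$ via minimality) yields $0\le v_j\le 1$, and $v_j$ is superharmonic in $2B_j$. Iterating the weak Harnack inequality for superminimizers (Theorem~\ref{theorem:WeakHarnack}) across the nested balls $B_j\supset B_{j+1}\supset\cdots$ together with the capacity comparisons in Lemmas~\ref{lemma:capcomparison} and~\ref{lem-cap-with-s-t} produces
\[
    v_j(x_0) \le C a_j.
\]
This step is the main obstacle: in the classical Euclidean setting such a bound follows from the Euler--Lagrange equation and the Wolff potential, whereas here we have only the variational inequality, and the estimate must be extracted purely from the weak Harnack inequality chained across scales.

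Now choose $j_0$ so large that $C\sum_{j\ge j_0} a_j < 1$ and set $B:=B_{j_0}$. Let
\[
    G_e := G\cap\bigcup_{\substack{j\ge j_0\\ j\text{ even}}} (B_j\setminus \itoverline{B}_{j+1}),
    \qquad
    G_o := G\cap\bigcup_{\substack{j\ge j_0\\ j\text{ odd}}} (B_j\setminus \itoverline{B}_{j+1}),
\]
both open, with $G_e\cup G_o = G\cap(B\setminus\{x_0\})\supset E\cap B$ since $x_0\notin E\subset G$. Define $u$ and $u'$ as the lower semicontinuously regularized solutions of $\K_{\chi_{G_e},0}(2B)$ and $\K_{\chi_{G_o},0}(2B)$ respectively. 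Truncation again gives $0\le u,u'\le 1$, and the lower semicontinuous regularization combined with $u\ge\chi_{G_e}$ q.e.\ forces $u\equiv 1$ on the open set $G_e$ (and $u'\equiv 1$ on $G_o$); hence $E\cap B\subset\{u=1\}\cup\{u'=1\}=F\cup F'$.

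Finally, to control $u(x_0)$, observe that $w:=\sum_{j\ge j_0,\,j\text{ even}} v_j$ is a nonnegative superharmonic function on $2B$ (the partial sums converge in $\Np(2B)$ by the obstacle-problem energy estimates) with $w\ge \chi_{G_e}$, so by the comparison principle of Lemma~\ref{lem-obst-le-reg} we obtain $u\le w$ in $2B$, and therefore
\[
    u(x_0) \le w(x_0) \le C\sum_{\substack{j\ge j_0\\ j\text{ even}}} a_j < 1,
\]
and symmetrically $u'(x_0)<1$. The even/odd split is forced by the fact that summing the $v_j$ over \emph{all} $j\ge j_0$ may exceed $1$, while the maximum of two superharmonic functions is not generally superharmonic; one must therefore handle the two halves by separate obstacle problems and only take the maximum at the very end.
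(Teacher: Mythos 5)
Your overall architecture (pass to an open thin neighbourhood via Lemma~\ref{lem-HeKiMa-12.11}, split the set into two interleaved families of annuli, solve two obstacle problems, and only take the maximum at the end) is essentially the paper's strategy. But the proof has a genuine gap at its analytic core: the claimed bound $v_j(x_0)\le Ca_j$. As you have defined it, $v_j$ is the capacitary potential of the \emph{full} trace $G\cap\itoverline{B}_j$ in $2B_j$, so the obstacle is active at all scales $i\ge j$ near $x_0$; the value $v_j(x_0)$ is then governed by the whole tail $\sum_{i\ge j}a_i$ (a Wolff-potential sum), not by the single term $a_j$, and the bound is false in general. Summing the corrected bounds gives $w(x_0)\lesssim\sum_j\sum_{i\ge j}a_i=\sum_i(i-j_0+1)a_i$, which thinness does not control. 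Even if you replace $G\cap\itoverline{B}_j$ by the single annulus $G\cap(B_j\setminus\itoverline{B}_{j+1})$ --- which is what you need --- the estimate is exactly the hard step, and you have asserted rather than proved it: one must (i) run the energy-splitting argument and the two weak Harnack inequalities to bound $\sup_{\bdry B_{j+1}}v_j$ by the capacity ratio (this requires an obstacle-free buffer annulus of relative width $50\lambda$, which your dyadic ratio $2$ and even/odd split do not provide), and then (ii) propagate from $\bdry B_{j+1}$ to $x_0$ using that $v_j$ is a minimizer inside $B_{j+1}$. This is precisely the content of Lemma~\ref{lem-est-pot-bdryB} and the inductive comparison in Proposition~\ref{prop-u(x0)-inf-product}, which the paper proves at length; acknowledging that ``the estimate must be extracted purely from the weak Harnack inequality chained across scales'' does not discharge it.

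There are also smaller defects you should repair. Your sets $G_e\cup G_o$ omit $G\cap\bdry B_{j+1}$ for every $j$ (the open annuli $B_j\setm\itoverline{B}_{j+1}$ exclude the spheres), and making the spheres $\mu$-null does not make them capacity-null, so $E\cap B\subset F\cup F'$ fails pointwise; the paper avoids this by using two \emph{overlapping} families of half-annuli at interleaved radii ($r$ and $r'=r/5$) rather than a disjoint even/odd partition. Finally, the comparison $u\le w$ with $w=\sum_{j\text{ even}}v_j$ is not an instance of Lemma~\ref{lem-obst-le-reg} (which compares two obstacle-problem solutions with Newtonian data), $\Np(2B)$-convergence of the partial sums does not follow from $\sum\cp(D_j,2B_j)^{1/p}$ without further argument, and $w$ is unbounded near $x_0$; the paper sidesteps all of this by estimating the single potential of the whole union of annuli directly through the inductive product bound $u(x_0)\le 1-\prod_j(1-a_j)$ instead of summing single-scale potentials.
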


Note that $u$, $u'$ and $v$ above are
lower semicontinuous, quasicontinuous and finely continuous in $B$.
In the proof
we will use
two lemmas which are also of independent interest
(see e.g.\ the proof of Proposition~\ref{prop-pt-thick}).
We shall frequently use the following notion.

\begin{deff}
We say that a function $u$ is the \emph{capacitary potential}
of a set $E$ in $B\supset E$ if it is the
lower semicontinuously regularized solution of the $\K_{\chi_E,0}(B)$-obstacle problem.
\end{deff}

\begin{lem}    \label{lem-est-pot-bdryB}
Let $B=B(x_0,r)$ and $B_0$ be balls such that $50\lambda B\subset B_0$ and
$\Cp(X\setm B_0)>0$. Also let $E\subset \tfrac12B_0$ be such that
$E\cap \bigl(2B\setm\tfrac12B\bigr)=\emptyset$ and let $u$ be the capacitary potential of $E$ in $B_0$.
Then
\begin{equation}   \label{eq-est-pot-bdryB}
\sup_{\bdry B} u \le C' \biggl( \frac{\cp(E,B_0)}{\cp(B,B_0)}
\biggr)^{1/(p-1)}.
\end{equation}
\end{lem}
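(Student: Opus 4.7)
The plan is to exploit that $E$ avoids the annulus $2B\setm\tfrac12B$, where consequently $u$ is continuous and $p$-harmonic, while globally $u$ is a nonnegative superminimizer in $B_0$ of total energy $\int g_u^p\,d\mu=\cp(E,B_0)$ that equals $1$ q.e.\ on $E$. The core idea is to extract Harnack information on $\bdry B$, translate it into a lower bound on $u$ inside $B$, and then test the capacity $\cp(B,B_0)$ with a truncation of $u$ scaled so that the exponent $1/(p-1)$ emerges from a nonlinear energy identity.

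First I would apply the weak Harnack inequalities (Theorems~\ref{thm-sup-submin-obst-prob} and~\ref{theorem:WeakHarnack}) on a finite cover of $\bdry B$ by balls $B(x_i,r/8)\subset 2B\setm\tfrac12B$; chaining these gives the Harnack bound $M:=\sup_{\bdry B}u\le C\inf_{\bdry B}u=:Cm$, with $C$ depending only on the doubling and Poincar\'e constants. Since $u$ is superharmonic in $B_0$, the minimum principle forces $u\ge m$ q.e.\ in $B$, so $B\subset\{u\ge m\}$ up to a set of capacity zero.

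The central variational step is the identification, for each $t\in(0,1]$, of the truncation $v_t:=\min(u,t)/t$ as the capacitary potential of $\{u\ge t\}$ in $B_0$. One verifies this by taking any competitor $w$ admissible for $\cp(\{u\ge t\},B_0)$, considering the concatenation $\max(u,tw)\in\K_{\chi_E,0}(B_0)$, and invoking the minimality of $u$ given by Lemma~\ref{lem-obst-le-reg}; the result is
\[
\cp(\{u\ge t\},B_0)=t^{-p}\int_{\{u<t\}}g_u^p\,d\mu.
\]
Taken at $t=m$ together with $B\subset\{u\ge m\}$, this alone yields the weaker bound $m^p\cp(B,B_0)\le\cp(E,B_0)$, i.e.\ the exponent $1/p$ rather than the desired $1/(p-1)$.

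The main obstacle is to close the gap between $1/p$ and $1/(p-1)$. This requires the nonlinear energy-distribution estimate
\[
\int_{\{u<t\}}g_u^p\,d\mu\le C\,t\,\cp(E,B_0),\qquad 0<t\le 1,
\]
which in the Euclidean setting follows from the coarea formula together with flux conservation $\int_{\{u=t\}}g_u^{p-1}\,d\mathcal{H}^{n-1}=\cp(E,B_0)$ for $p$-harmonic functions. Since no pointwise equation is available here, I would derive it purely variationally by iterating the identification of $v_t$ as a capacitary potential along a dyadic family of levels $t_k$, telescoping the energy increments, and using Lemma~\ref{lemma:capcomparison} to keep the constants uniform. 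Once this estimate is in place, taking $t=m$ gives $\cp(B,B_0)\le Cm^{1-p}\cp(E,B_0)$, hence $m^{p-1}\le C\cp(E,B_0)/\cp(B,B_0)$, and combining with $M\le Cm$ from the Harnack step finishes the proof.
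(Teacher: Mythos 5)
Your skeleton matches the paper's: compare $\sup_{\bdry B}u$ with $m=\inf_B u$ via weak Harnack inequalities (using that $u$ is a minimizer in the annulus $2B\setm\tfrac12 B$, which $E$ avoids), then turn the truncation $\min\{u/m,1\}$ into a test function for $\cp(B,B_0)$. You also correctly isolate the crux: the naive bound $\int_{\{u< m\}}g_u^p\,d\mu\le\cp(E,B_0)$ only gives the exponent $1/p$, and what is needed is $\int_{\{u< m\}}g_u^p\,d\mu\le C m\,\cp(E,B_0)$. But that is exactly the step you do not prove, and the route you sketch for it points in the wrong direction. The identification of $\min\{u,t\}/t$ as the capacitary potential of $\{u\ge t\}$ yields, by monotonicity, $t^{-p}\int_{\{u<t\}}g_u^p\,d\mu=\cp(\{u\ge t\},B_0)\ge\cp(E,B_0)$, i.e.\ a \emph{lower} bound on the energy below level $t$; likewise, telescoping the increments $(\min\{u,t_{k+1}\}-\min\{u,t_k\})/(t_{k+1}-t_k)$, each admissible for a condenser capacity that is at least $\cp(E,B_0)$, gives $\int_{\{t<u<1\}}g_u^p\,d\mu\ge(1-t)N^{1-p}\cp(E,B_0)$ for $N$ equally spaced levels, which is vacuous as $N\to\infty$. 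So the dyadic iteration, as described, does not close the gap.

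The missing ingredient is the truncation from \emph{above}. The function $u_2=(u-m\min\{u/m,1\})/(1-m)=(u-m)_\limplus/(1-m)$ belongs to $\Np_0(B_0)$ and equals $1$ q.e.\ on $E$, hence is admissible for $\cp(E,B_0)$, and $g_{u_2}=g_u\chi_{\{u>m\}}/(1-m)$ a.e. Since $u$ itself has minimal energy $\cp(E,B_0)$, this gives $\int_{\{u>m\}}g_u^p\,d\mu\ge(1-m)^p\cp(E,B_0)$, whence $\int_{\{u\le m\}}g_u^p\,d\mu\le\bigl(1-(1-m)^p\bigr)\cp(E,B_0)\le pm\,\cp(E,B_0)$ --- your energy-distribution estimate with $C=p$, in one step and with no coarea formula. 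This is precisely the paper's argument: it writes $\cp(E,B_0)\ge m^p\cp(B,B_0)+(1-m)^p\cp(E,B_0)$ using the pair $u_1=\min\{u/m,1\}$, $u_2$, and rearranges to get $m\le(p\,\cp(E,B_0)/\cp(B,B_0))^{1/(p-1)}$. Two smaller points: your Harnack chain along a cover of $\bdry B$ presupposes that the annulus is connected, which need not hold in a metric space; the paper instead takes a single ball $B'=B(x',r/5)$ at a near-maximum point $x'\in\bdry B$, bounds $\sup_{B'}u\le C(\vint_{2B'}u^q\,d\mu)^{1/q}$ by the subminimizer weak Harnack inequality in $2B\setm\tfrac12\itoverline{B}$, and compares with $(\vint_{2B}u^q\,d\mu)^{1/q}\le C\inf_B u$ from the superminimizer weak Harnack inequality, using only $2B'\subset 2B$ and doubling. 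Finally, the degenerate cases $m=0$ and $m=1$ need separate (easy) treatment before dividing by $m$ and $1-m$.
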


\begin{proof}
Let $m=\inf_{B} u$.
If $m=0$, then the left-hand side in \eqref{eq-est-pot-bdryB} is $0$,
by Theorem~\ref{theorem:WeakHarnack}, and \eqref{eq-est-pot-bdryB} follows.
If $m=1$, then $\cp(B,B_0) \le \int_{B_0} g_u^p \, d\mu = \cp(E,B_0)$
and \eqref{eq-est-pot-bdryB} holds for any $C' \ge 1$.
Assume therefore that  $0 < m<1$.
Thus
the functions
\[
u_1=\min\Bigl\{\frac{u}{m},1\Bigr\}\quad\textrm{and}\quad u_2=\frac{u-mu_1}{1-m}
\]
are admissible in the definition of $\cp(B,B_0)$ and $\cp(E,B_0)$,
respectively, (in view of Remark~\ref{rmk-cp}).
Note that for a.e.\ $x\in B_0$, at least one of $g_{u_1}(x)$ and $g_{u_2}(x)$ vanishes.
As $u$ is the capacitary potential of $E$ in $B_0$, we therefore
obtain that
\begin{align*}
\cp(E,B_0) & = \int_{\{u \le m\}} g_u^p\,d\mu + \int_{\{u>m\}} g_u^p\,d\mu \\
&= m^p \int_{B_0} g_{u_1}^p\,d\mu
      + (1-m)^p \int_{B_0} g_{u_2}^p\,d\mu \\
&\ge m^p \cp(B,B_0) + (1-m)^p \cp(E,B_0).
\end{align*}
It follows that
\[
\cp(B,B_0) \le \frac{1-(1-m)^p}{m^p} \cp(E,B_0)
  \le p m^{1-p} \cp(E,B_0)
\]
and equivalently,
\begin{equation} \label{eq-wCartlem-1}
m \le \biggl( \frac{p \cp(E,B_0)}{\cp(B,B_0)} \biggr)^{1/(p-1)}.
\end{equation}
Now, let $B'=B(x',r')$ be such that $x'\in \bdry B$, $r'=\frac{1}{5}r$, and
$
\sup_{\bdry B} u \le \sup_{B'} u.
$
Then $2B'\Subset2B\setm\tfrac12\itoverline{B}$ and as $u$ is a nonnegative
lower semicontinuously regularized
minimizer in $2B\setm\tfrac12\itoverline{B}$, the weak Harnack inequality
for subminimizers (Theorem~\ref{thm-sup-submin-obst-prob})
implies that for every $q>0$, there exists a constant $C_q$,
independent of $u$ and $B'$,
such that
\begin{equation} \label{eq-wCartlem-2}
\sup_{B'} u \le C_q \biggl( \vint_{2B'} u^q\,d\mu \biggr)^{1/q}.
\end{equation}
Finally, as $u$ is a superminimizer in $B_0$,
the weak Harnack inequality for superminimizers
(Theorem~\ref{theorem:WeakHarnack}) and the doubling property of $\mu$
imply that for some $q>0$ and $\Ct>0$,
independent of $u$, $B$ and $B'$,
\begin{equation} \label{eq-wCartlem-3}
m = \inf_B u \ge \Ct \biggl( \vint_{2B} u^q\,d\mu \biggr)^{1/q}
\ge C \biggl( \vint_{2B'} u^q\,d\mu \biggr)^{1/q}.
\end{equation}
Combining \eqref{eq-wCartlem-1}--\eqref{eq-wCartlem-3} gives
\eqref{eq-est-pot-bdryB}.
\end{proof}

\begin{remark}  \label{rem-converse-with-inf}
Lemma~3.9 in J.~Bj\"orn~\cite{JB-pfine} (Lemma~11.20 in~\cite{BBbook} or Lemma~5.6 in
Bj\"orn--MacManus--Shanmugalingam~\cite{BMS} in linearly locally
connected spaces)
provides us with the converse inequality to~\eqref{eq-est-pot-bdryB}, viz.\
\begin{equation}   \label{eq-est-pot-bdryB-inf}
\inf_{\bdry B} u \ge C'' \biggl( \frac{\cp(E,B_0)}{\cp(B,B_0)}
\biggr)^{1/(p-1)}.
\end{equation}
\end{remark}

\begin{prop}  \label{prop-u(x0)-inf-product}
For a ball $B=B(x_0,r)$ with $\Cp(X\setm B)>0$ let $B_j=\s^{-j}B$,
$j=0,1,\ldots$, where $\s\ge50\la$ is fixed.
Assume that $E\subset \tfrac12B$ is such that
$E\cap \bigl(2B_j\setm\tfrac12B_j\bigr)=\emptyset$ for all $j=0,1,\ldots$, and let
$u$ be the capacitary potential of $E$ in $B$. Then
\[
1 - \prod_{j=0}^\infty (1-ca_j) \le u(x_0) \le 1 - \prod_{j=0}^\infty (1-a_j),
\]
where
\[
a_j = \min\biggl\{ 1, C' \biggl( \frac{\cp\bigl(E\cap \tfrac12B_{j},B_j\bigr)}
    {\cp(B_{j+1},B_j)}
   \biggr)^{1/(p-1)} \biggr\},
\]
$c=C''/C'>0$ and $C'$ and $C''$ are as in~\eqref{eq-est-pot-bdryB}
and~\eqref{eq-est-pot-bdryB-inf}.
\end{prop}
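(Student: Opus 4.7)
The plan is to telescope over the annular scales $B_j$, turning Lemma~\ref{lem-est-pot-bdryB} and its converse Remark~\ref{rem-converse-with-inf} into a product estimate via an obstacle-problem comparison at each scale. For each $j \geq 0$ let $v_j$ denote the capacitary potential of $E \cap \tfrac12 B_j$ in $B_j$ (with zero boundary values), and set
\[
    M_j := \sup_{\partial B_{j+1}} u, \qquad m_j := \inf_{\partial B_{j+1}} u.
\]
These are genuine pointwise maxima/minima because the hypothesis $E \cap (2B_{j+1} \setminus \tfrac12 B_{j+1})=\emptyset$ makes the obstacle $\chi_E$ vanish on the annulus $A_{j+1} := 2B_{j+1} \setminus \overline{\tfrac12 B_{j+1}}$, so both $u$ and $v_j$ are $p$-harmonic, hence continuous, in a neighbourhood of $\partial B_{j+1}$. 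Applying Lemma~\ref{lem-est-pot-bdryB} and Remark~\ref{rem-converse-with-inf} to $v_j$ with ``$B$'' replaced by $B_{j+1}$ and ``$B_0$'' by $B_j$ (whose hypothesis $50\lambda B_{j+1}\subset B_j$ holds since $\sigma\ge50\lambda$) yields
\[
    ca_j \;\leq\; \inf_{\partial B_{j+1}} v_j \;\leq\; \sup_{\partial B_{j+1}} v_j \;\leq\; a_j.
\]

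For the upper estimate I would use Lemma~\ref{lem-localize} to identify $u|_{B_{j+1}}$ as the lsc-regularised solution of $\K_{\chi_{E\cap\frac12 B_{j+1}},\,u}(B_{j+1})$, using the gap condition to rewrite $E\cap B_{j+1}=E\cap\tfrac12 B_{j+1}$. Then $\tilde u:=M_j+(1-M_j)v_{j+1}$ is the lsc-regularised solution of $\K_{\psi,M_j}(B_{j+1})$ with $\psi=M_j+(1-M_j)\chi_{E\cap\frac12 B_{j+1}}$, so $\chi_{E\cap\frac12 B_{j+1}}\le\psi$, and $(u-M_j)_+\in N^{1,p}_0(B_{j+1})$ because $u$ is continuous on $\partial B_{j+1}$ with $u\le M_j$ there. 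Lemma~\ref{lem-obst-le-reg} yields $u\le\tilde u$ in $B_{j+1}$, and taking $\sup$ over $\partial B_{j+2}$ gives the recursion $1-M_{j+1}\ge(1-M_j)(1-a_{j+1})$. Combined with the base case $M_0\le a_0$ (Lemma~\ref{lem-est-pot-bdryB} applied to $u$ itself at scale $B_1\subset B$), iteration produces $1-M_j\ge\prod_{i=0}^j(1-a_i)$.

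For the lower estimate Lemma~\ref{lem-obst-le-reg} compares in the wrong direction, so instead I would observe that $1-u$ and $(1-m_j)(1-v_{j+1})$ are non-negative $p$-harmonic functions in the open set $B_{j+1}\setminus\overline{E}$ (a step that one may justify by first replacing $E$ by an open neighbourhood produced by Lemma~\ref{lem-HeKiMa-12.11} together with the outer regularity of $\cp$ from Theorem~\ref{thm-outercap-cp}, and then passing to the limit). On $\partial B_{j+1}$ we have $1-u\le 1-m_j=(1-m_j)(1-v_{j+1})$, and on $\overline{E}\cap\overline{B_{j+1}}$ both sides vanish. The $p$-harmonic comparison principle then yields $u\ge m_j+(1-m_j)v_{j+1}$ in $B_{j+1}\setminus\overline{E}$, and trivially on $\overline{E}$ where the right side is $\le 1=u$. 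Taking $\inf$ over $\partial B_{j+2}$ together with the converse Remark~\ref{rem-converse-with-inf} at the next scale gives $1-m_{j+1}\le(1-m_j)(1-ca_{j+1})$; with the base case $m_0\ge ca_0$, iteration produces $1-m_j\le\prod_{i=0}^j(1-ca_i)$.

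Finally, to transfer the boundary bounds to $u(x_0)$, I would invoke the representation \eqref{essliminf}: $u(x_0)=\lim_j\essinf_{B_{j+1}} u$. The lower iteration gives $u\ge m_j+(1-m_j)v_{j+1}\ge m_j$ throughout $B_{j+1}$, hence $\essinf_{B_{j+1}} u\ge m_j$, and $j\to\infty$ yields the lower bound. For the upper bound, note that $v_{j+1}$ is continuous up to $\partial B_{j+1}$ with vanishing boundary values, so $\essinf_{B_{j+1}} v_{j+1}=0$; combined with $u\le M_j+(1-M_j)v_{j+1}$ this gives $\essinf_{B_{j+1}} u\le M_j$, and $M_j\le 1-\prod_{i=0}^j(1-a_i)\le 1-\prod_{i=0}^\infty(1-a_i)$ finishes the proof. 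The main technical hurdle will be the lower iteration, since the obstacle-problem comparison cannot be used in that direction and the $p$-harmonic comparison in $B_{j+1}\setminus\overline{E}$ requires a careful approximation argument to handle the potentially non-closed set $E$.
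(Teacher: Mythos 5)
Your setup (the potentials $v_j$ of $E\cap\tfrac12B_j$ in $B_j$, the two-sided bound $ca_j\le\inf_{\bdry B_{j+1}}v_j\le\sup_{\bdry B_{j+1}}v_j\le a_j$ from Lemma~\ref{lem-est-pot-bdryB} and Remark~\ref{rem-converse-with-inf}, and the multiplicative induction over the scales) is exactly the paper's. Your upper iteration is essentially the paper's argument in a different packaging: the paper shifts $u$ by $1-b_k$ and works in the superlevel set $G_k=\{u>1-b_k\}$ so that the shifted function lies in $\Np_0(G_k)$, whereas you glue $(u-M_j)_\limplus$ across $\bdry B_{j+1}$; both reduce to Lemma~\ref{lem-localize} plus the comparison Lemma~\ref{lem-obst-le-reg}, and your version is fine modulo the standard gluing lemma.

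The lower iteration is where there is a genuine gap, and also a missed idea. First, your premise that Lemma~\ref{lem-obst-le-reg} ``compares in the wrong direction'' is not correct: the trick is to compare boundary data rather than obstacles. By the minimum principle, $u\ge m_j$ throughout $B_{j+1}$, so $u-m_j\ge 0$; by Lemma~\ref{lem-localize} it solves the obstacle problem in $B_{j+1}$ with obstacle $\chi_{E\cap\frac12B_{j+1}}-m_j$, and since it is nonnegative one may replace that obstacle by its positive part $(1-m_j)\chi_{E\cap\frac12B_{j+1}}$, which is exactly the obstacle for $(1-m_j)v_{j+1}$. Lemma~\ref{lem-obst-le-reg} with \emph{equal} obstacles and $(0-(u-m_j))_\limplus=0\in\Np_0(B_{j+1})$ then gives $(1-m_j)v_{j+1}\le u-m_j$, which is the desired recursion; this is what the paper does. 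Second, the route you propose instead does not close as sketched. The comparison principle for \p-harmonic functions on $\Om'=B_{j+1}\setm\itoverline{E}$ requires $\bigl(m_j+(1-m_j)v_{j+1}-u\bigr)_\limplus\in\Np_0(\Om')$, and verifying this on the part of $\bdry\Om'$ coming from $\itoverline{E}$ needs $u\ge m_j+(1-m_j)v_{j+1}$ q.e.\ on $\itoverline{E}\cap B_{j+1}$. But $u=1$ only q.e.\ on $E$ itself, not on $\itoverline{E}\setm E$, which may have positive capacity, so ``both sides vanish on $\itoverline{E}\cap\itoverline{B}_{j+1}$'' is unjustified (and is essentially what you are trying to prove). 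The proposed repair via Lemma~\ref{lem-HeKiMa-12.11} is unavailable: that lemma assumes $E$ is thin, which is not a hypothesis of the proposition (indeed the proposition is applied with $E=\{x_0\}$ of positive capacity in the proof of Proposition~\ref{prop-pt-thick}), replacing $E$ by an open set $G$ still leaves $\bdry G$ uncontrolled, and the limiting argument for the corresponding potentials is not addressed. You should replace the lower iteration by the obstacle-comparison argument above.
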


\begin{remark} \label{rem-Wolff}
(a) The case $c\ge 1$ is not excluded in Proposition \ref{prop-u(x0)-inf-product}. However, by \eqref{eq-est-pot-bdryB} and \eqref{eq-est-pot-bdryB-inf}, the case $c>1$ holds true only if $a_j=0$ for all $j=0,1,\dots$\,.
By \eqref{eq-est-def-aj2}, the case $c=1$ holds true only if
\[
\inf_{\bdry B_{j+1}} u_j =\sup_{\bdry B_{j+1}} u_j
\]
for all $j=0,1,\dots$\,. See the proof below for the notation here.

(b) The first inequality in Proposition~\ref{prop-u(x0)-inf-product}
can be obtained from Lemma~5.7 in
Bj\"orn--MacManus--Shanmugalingam~\cite{BMS} (in linearly locally
connected spaces) or from Proposition~3.10 in J.~Bj\"orn~\cite{JB-pfine}
(alternatively Theorem~11.21 in \cite{BBbook}).
In this paper we will not need it, 
but we have chosen to include it here as
the proof below shows that both
inequalities can be obtained simultaneously.

In fact, by taking logarithms, the left estimate in
Proposition~\ref{prop-u(x0)-inf-product} implies
\[
1 -u(x_0) \le \exp\biggl( -c \sum_{j=0}^\infty a_j \biggr),
\]
which in particular shows that if $E$ is thick at $x_0$ then $u(x_0)=1$.
As for the right estimate in Proposition~\ref{prop-u(x0)-inf-product},
it is easily shown by induction that
$1 - \prod_{j=0}^n (1-a_j) \le \sum_{j=0}^n a_j$ and hence we obtain the
qualitative estimate
\begin{equation}   \label{eq-wolff-pot-est}
u(x_0) \le C' \sum_{j=0}^\infty \biggl(
\frac{\cp\bigl(E\cap \tfrac12B_{j},B_j\bigr)}{\cp(B_{j+1},B_j)}\biggr)^{1/(p-1)},
\end{equation}
which in $\R^n$, with $p<n$, 
reduces to a special case of the estimate
(6.1) in Maz\cprime ya--Havin~\cite{MazHa72}.
 It corresponds to the Wolff potential estimates for superharmonic functions
in e.g.\ Kilpel\"ainen--Mal\'y~\cite{KiMa}, Mikkonen~\cite{Mikkonen} and
Bj\"orn--MacManus--Shanmugalingam~\cite{BMS} and partly generalizes
Theorem~3.6 in J.\ Bj\"orn~\cite{JBCalcVar}.
More precisely, the Wolff potential for the capacitary measure of $E$
is easily seen to be comparable to the sum
in~\eqref{eq-wolff-pot-est}.
The estimates for general superharmonic functions in \cite{KiMa},
\cite{Mikkonen} and \cite{BMS} contain an additional term, such as
$(\vint_{B_0}u^p\,d\mu)^{1/p}$, but since the
potential $u$ has boundary values $0$ on $\bdry B_0$, this term can be
avoided in this case, cf.~\cite[Theorem~3.6]{JBCalcVar}.

In particular,~\eqref{eq-wolff-pot-est} implies the necessity part of
the Wiener criterion in certain domains (such that
$(2B_j\setm\tfrac12B_j)\setm\Om)=\emptyset$ for all sufficiently large
$j$ and some $\sigma>0$), since for a sufficiently small ball $B=B(x_0,r)$, the
capacitary potential of $\tfrac12 B\setm\Om$ in $B$ will not attain
its boundary value $1$ at $x_0$.
Note that the necessity part of the Wiener criterion is still
open for \p-harmonic functions (based on upper gradients) in
metric spaces.
\end{remark}

\begin{proof}
For $j=0,1,\ldots$, let $u_j$ be the capacitary potential of
$E_j=E\cap \tfrac12B_{j}$ in $B_j$.
Then $u=u_0$.
Lemma~\ref{lem-est-pot-bdryB} and Remark~\ref{rem-converse-with-inf}
imply that for all $j=0,1,\ldots$,
\begin{equation}    \label{eq-est-def-aj2}
ca_j \le \inf_{\bdry B_{j+1}} u_j \le \sup_{\bdry B_{j+1}} u_j \le a_j.
\end{equation}
We shall show by induction that for all $k=1,2,\ldots$,
\begin{equation}  \label{eq-step-MI2}
1-\sup_{\bdry B_{k}} u \ge \prod_{j=0}^{k-1} (1-a_j) =: b_k
\quad \text{and} \quad
1-\inf_{\bdry B_{k}} u \le \prod_{j=0}^{k-1} (1-ca_j) =: b'_k.
\end{equation}
By~\eqref{eq-est-def-aj2}, this clearly holds for $k=1$.
Assume that \eqref{eq-step-MI2} holds for some $k\ge1$ and let $G_k=\{x\in B_k:u(x)>1-b_k\}$.
Then $G_k$ is open by the lower semicontinuity of $u$, and
since $\sup_{\bdry B_{k}}u\le 1-b_k$, we have
$v_k:=(u-(1-b_k))_\limplus\in\Np_0(G_k)$.
Lemma~\ref{lem-localize} shows that $v_k$ is the
lower semicontinuously regularized solution of the $\K_{\psi_k,0}(G_k)$-obstacle problem,
where $\psi_k=(\chi_{E_0}-(1-b_k))_\limplus=b_k\chi_{E_k}$ in $B_k$.

On the other hand, by the minimum principle for superharmonic functions,
we have $u\ge1-b'_k$ in $B_k$ and Lemma~\ref{lem-localize} again shows that
$v'_k:=u-(1-b'_k)\ge0$ is the lower semicontinuously regularized solution
of the $\K_{\psi'_k,v'_k}(B_k)$-obstacle problem,
where $\psi'_k=(\chi_{E_0}-(1-b'_k))_\limplus=b'_k\chi_{E_k}$ in $B_k$.

Since $0\le u_k\in\Np_0(B_k)$ is the lower semicontinuously
regularized solution of the $\K_{\chi_{E_k},0}(B_k)$-obstacle problem,
the comparison principle
(Lemma~\ref{lem-obst-le-reg}) yields that
$v'_k\ge b'_k u_k$ in $B_k$ and that
$v_k\le b_k u_k$ in $G_k$, and hence in $B_k$.
In particular, by~\eqref{eq-est-def-aj2},
\[
\sup_{\bdry B_{k+1}} v_k \le \sup_{\bdry B_{k+1}} b_k u_k \le a_k b_k
\quad \text{and} \quad
\inf_{\bdry B_{k+1}} v'_k \ge \inf_{\bdry B_{k+1}} b'_k u_k \ge ca_k b'_k.
\]
Hence
\begin{align*}
\sup_{\bdry B_{k+1}} u &\le \sup_{\bdry B_{k+1}} v_k + 1-b_k
\le a_k b_k +1-b_k = 1- b_k(1-a_k)=1- b_{k+1}  \\
\intertext{and}
\inf_{\bdry B_{k+1}} u &= \inf_{\bdry B_{k+1}} v'_k + 1-b'_k
\ge ca_k b'_k +1-b'_k =1-b'_k(1-ca_k)=1- b'_{k+1},
\end{align*}
which proves~\eqref{eq-step-MI2} for $k+1$.
By induction, \eqref{eq-step-MI2} holds for all $k=1,2\ldots$\,.
Since $u$ is lower semicontinuously regularized, letting $k\to\infty$ gives
\begin{align*}
u(x_0) &= \liminf_{x\to x_0} u(x) \le 1 -\lim_{k\to\infty} b_k
= 1 - \prod_{j=0}^\infty (1-a_j)
\end{align*}
and, by the minimum principle,
\begin{align*}
u(x_0) &\ge 1 -\lim_{k\to\infty} b'_k = 1 - \prod_{j=0}^\infty (1-ca_j).
\qedhere
\end{align*}
\end{proof}

We are now ready to prove the weak Cartan property. The proof uses a separation argument which has been inspired by
Theorem~3.2 in Heinonen--Kilpel\"ainen--Martio~\cite{HeKiMa89},
and whose idea goes back to 
Lindqvist--Martio~\cite{Lind-Mar}.

\begin{proof}[Proof of Theorem~\ref{thm-weak-Cartan}]
By Lemma~\ref{lem-HeKiMa-12.11}, we can assume that $E$
is open.
For $r>0$ let $B_j=\s^{-j}B(x_0,r)$ with $\s=50\la$ be as in
Proposition~\ref{prop-u(x0)-inf-product}.
Also let $D_j=\bigl(\tfrac12 B_j\setm2\itoverline{B}_{j+1}\bigr)\cap E$ and
$E_j=\bigcup_{i=j}^\infty D_i$, $j=0,1,\ldots$\,.
Note that $E_0\cap \bigl(2B_j\setm\tfrac12B_j\bigr)=\emptyset$ for all $j=0,1,\ldots$\,.
Proposition~\ref{prop-u(x0)-inf-product} then implies that the capacitary potential
$u$ of $E_0$ in $B_0=B(x_0,r)$ satisfies
\[
u(x_0)\le 1 - \prod_{j=0}^\infty (1-a_j),
\]
where
\[
a_j = \min\biggl\{ 1, C' \biggl( \frac{\cp(E_{j},B_j)}{\cp(B_{j+1},B_j)}
    \biggr)^{1/(p-1)} \biggr\}
\]
and $C'$ is as in Lemma~\ref{lem-est-pot-bdryB}.
Since $E$ is thin at $x_0$, we can find $r>0$ so that all $a_j\le\tfrac12$
and $\sum_{j=0}^{\infty}a_j<\infty$
(by Lemma~\ref{lem-wienersum}).
Hence the series $\sum_{j=0}^{\infty}\log(1-a_j)$ converges as well,
which implies that $\prod_{j=0}^\infty (1-a_j)>0$, i.e.\ that $u(x_0)<1$.
On the other hand, we have $u=1$ in $E_0$, as $E_0$ is open.

Similarly, since
$2\itoverline B_j\setm\tfrac12B_j
\subset \tfrac{1}{5}\bigl(\tfrac12B_{j-1}\setm2\itoverline B_j\bigr)$,
replacing $r$ by $r'=\tfrac{1}{5}r$ in the above argument
provides us with the capacitary potential $u'$ in $B(x_0,r')$
which satisfies $u'(x_0)<1$ and $u'=1$ in
$\bigl(E\cap B\bigl(x_0,\tfrac12 r'\bigr)\bigr)\setm E_0$.
Letting $B=B\bigl(x_0,\tfrac{1}{2}r'\bigr)$ concludes the proof.
\end{proof}

We end this section by proving Theorem~\ref{thm-j}.

\begin{proof}[Proof of Theorem~\ref{thm-j}]
\ref{j-b}  \eqv \ref{j-a} \eqv \ref{j-aa}
This follows directly from Lemma~\ref{lem-fine-intclosure}.

\ref{j-b} \imp \ref{j-c}
This follows from the weak Cartan property (Theorem~\ref{thm-weak-Cartan}).

\ref{j-c} \imp \ref{j-k} This is trivial.

\ref{j-k} \imp \ref{j-a}
We can find  $\de$  and a ball $B \ni x_0$
such that  $v(x_0) < \de <    v(x)$ for all $x \in B \cap E$.
As $v$ is finely continuous, by Theorem~\ref{thm-superharm-p-fine},
$V:=\{x \in B: v(x) < \de\}$ is a finely open fine neighbourhood of $x_0$.
Since $E \cap V = \emptyset$, we see that
$x_0 \notin \clEp$.
\end{proof}

\section{Consequences of the weak Cartan property}
\label{sect-coarsest}

In this section we establish several consequences of the weak Cartan property.
First, we prove Theorem~\ref{thm-coarsest-intro}, i.e.\
that the fine topology is the coarsest
topology making all superharmonic functions continuous, and that the base
of its neighbourhoods is given by finite intersections of level sets of
superharmonic functions.

The coarsest topology related to Theorem~\ref{thm-coarsest-intro}
is traditionally formulated using global superharmonic functions
on $\R^n$. This definition relies on the following extension result:
If $u$ is superharmonic in $\Om \subset \R^n$ and $G \Subset \Om$,
then there is a superharmonic function $v$ on $\R^n$ such that $v=u$ in $G$,
see  Theorem~3.1 in Kilpel\"ainen~\cite{Ki89} (for unweighted $\R^n$)
and Theorem~7.30 in Hei\-no\-nen--Kil\-pe\-l\"ai\-nen--Martio~\cite{HeKiMa}
(for weighted $\R^n$).
Such an extension result is not known for unbounded metric spaces,
while it is false for bounded metric spaces as
there are only constant superharmonic functions on $X$ if $X$ is bounded.
Therefore we directly prove the following local formulation.

\begin{thm}\label{thm-coarsest}
A set $U\subset X$ is a fine neighbourhood of $x_0$ if and only if there exist
constants $c_j$ and bounded superharmonic functions $u_j$ in some
ball $B\ni x_0$, $j=1,2,\ldots,k$, such that
\begin{equation}   \label{eq-nbhd-base}
x_0 \in \bigcap_{j=1}^k \{ x\in B: u_j(x)<c_j \} \subset U.
\end{equation}
\end{thm}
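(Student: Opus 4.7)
The plan is to obtain both implications by combining two tools already at our disposal: Theorem~\ref{thm-superharm-p-fine} (superharmonic functions are finely continuous) for the ``if'' direction, and the weak Cartan property (Theorem~\ref{thm-weak-Cartan}) for the ``only if'' direction. Neither direction should require substantial new work beyond careful bookkeeping, so I do not expect a genuine obstacle.

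For the ``if'' direction, suppose bounded superharmonic functions $u_1,\dots,u_k$ on a ball $B\ni x_0$ and constants $c_1,\dots,c_k$ satisfy \eqref{eq-nbhd-base}. First I would invoke Theorem~\ref{thm-superharm-p-fine} to conclude that each $u_j$ is finely continuous on $B$. Therefore the sublevel set $V_j:=\{x\in B: u_j(x)<c_j\}$ is finely open: given $y\in V_j$, fine continuity of $u_j$ at $y$ applied with $\eps=c_j-u_j(y)>0$ yields a finely open set $W\ni y$ on which $u_j<c_j$; intersecting with $B$ (which is finely open since open sets are finely open) produces a finely open subset of $V_j$ containing $y$. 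Hence $V:=\bigcap_{j=1}^k V_j$ is a finite intersection of finely open sets, and $x_0\in V\subset U$ by \eqref{eq-nbhd-base}, so $U$ is a fine neighbourhood of $x_0$.

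For the ``only if'' direction, let $U$ be a fine neighbourhood of $x_0$. By definition there is a finely open set $V\ni x_0$ with $V\subset U$, and then $E:=X\setm V$ is thin at $x_0$ while $x_0\notin E$. The weak Cartan property (Theorem~\ref{thm-weak-Cartan}) then furnishes a ball $B\ni x_0$ and superharmonic functions $u_1,u_2\in\Np(B)$ with $0\le u_i\le 1$, $u_i(x_0)<1$ for $i=1,2$, and $E\cap B\subset\{u_1=1\}\cup\{u_2=1\}$. Taking $k=2$ and $c_1=c_2=1$, any $x\in B$ with $u_1(x)<1$ and $u_2(x)<1$ lies outside $E$, hence in $V\subset U$; combined with $u_i(x_0)<1=c_i$ this gives \eqref{eq-nbhd-base}.

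Everything hinges on the weak Cartan property and the fine continuity of superharmonic functions, both already established. The only mildly delicate step is ensuring that the sublevel sets $V_j$ are finely open as subsets of $X$, which is resolved by intersecting the finely open witnesses of fine continuity with the metrically open ball~$B$.
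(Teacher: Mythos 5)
Your proof is correct and follows essentially the same route as the paper: the ``if'' direction via fine continuity of superharmonic functions (Theorem~\ref{thm-superharm-p-fine}) making the sublevel sets finely open, and the ``only if'' direction via the weak Cartan property applied to a thin complement, yielding $k=2$. The only cosmetic difference is that you apply the weak Cartan property to $X\setminus V$ rather than to $X\setminus U$ as the paper does; both choices work since each set is thin at $x_0$ and avoids $x_0$.
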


The proof shows that the neighbourhood base condition always holds with $k=2$.
Recall that a set $U$  is a \emph{fine neighbourhood}
of a point $x_0$ if it contains
a  finely open set $V \ni x_0$; it is not
required
that $U$ itself is finely open.

\begin{proof} Let $U\subset X$. First, we assume that there exist constants $c_j$ and bounded superharmonic functions $u_j$
in a ball $B\ni x_0$, $j=1,2,\ldots,k$, such that \eqref{eq-nbhd-base} holds.
By Theorem~\ref{thm-superharm-p-fine}, each $u_j$ is finely
continuous and hence
\[
V_j:= \{ x\in B: u_j(x)<c_j \}
\]
is finely open.
It follows that $\bigcap_{j=1}^k V_j$ is finely open and hence
$U$ is a fine neighbourhood of $x_0$.

To prove the converse, let $E=X\setm U$.
Then $x\notin E$ and $E$ is thin at $x$.
Let $B$, $F$, $F'$, $u$, and $u'$ be as given by
the weak Cartan property (Theorem~\ref{thm-weak-Cartan}).
Then
\[
B\cap U = B \setm E \supset B\setm (F \cup  F')
=\{x\in B:u(x)<1\}\cap\{x\in B:u'(x)<1\},
\]
i.e.\  the fine neighbourhood base condition holds with $k=2$.
\end{proof}

\begin{proof}[Proof of Theorem~\ref{thm-coarsest-intro}.]
By Theorem~\ref{thm-superharm-p-fine}, the fine topology makes all
superharmonic functions on all open subsets of $X$ continuous.
To show that it is the coarsest topology with this
property, let $\T$ be such a topology on $X$, and let $U\subset X$
be finely open.
We shall show that for every $x_0\in U$ there exists $V\in\T$ such that
$x_0\in V\subset U$.
Indeed, let $u_1$ and $u_2$ be the superharmonic functions provided by
Theorem~\ref{thm-coarsest} and so that~\eqref{eq-nbhd-base} holds.
Since $\T$ makes all superharmonic functions continuous, we get that
the level sets $\{ x\in B: u_j(x)<c_j \}$ belong to $\T$, and so does their
intersection.
In view of~\eqref{eq-nbhd-base} this concludes the proof.
\end{proof}

Note that here it is not enough to only consider all superharmonic
functions on $X$, as these may be just the constants (if $X$ is bounded).
Therefore, superharmonic functions on all open sets (or balls) in $X$ have to
be considered in Theorem~\ref{thm-coarsest-intro}.

As a consequence of Proposition~\ref{prop-u(x0)-inf-product}
we can also deduce Proposition~\ref{prop-pt-thick}.

\begin{proof}[Proof of Proposition~\ref{prop-pt-thick}]
Let $\s=50\la$, $E=\{x_0\}$, $B=B(x_0,r)$, $B_j$ and $u$ be as in
Proposition~\ref{prop-u(x0)-inf-product}.
Since $\Cp(\{x_0\})>0$, we have $u(x_0)=1$.
Proposition~\ref{prop-u(x0)-inf-product} yields
\[
u(x_0) \le 1 - \prod_{j=0}^\infty (1-a_j),
\]
where
\[
a_j = \min\biggl\{ 1, C' \biggl( \frac{\cp(\{x_0\},B_j)}
    {\cp(B_{j+1},B_j)}
   \biggr)^{1/(p-1)} \biggr\}
\]
and $C'$ is as in Lemma~\ref{lem-est-pot-bdryB}.
If $E$ were thin at $x_0$, we could find $r>0$ so that all $a_j\le\tfrac12$
and $\sum_{j=0}^{\infty}a_j<\infty$
(by Lemma~\ref{lem-wienersum}).
Hence the series $\sum_{j=0}^{\infty}\log(1-a_j)$ would converge as well,
implying that $\prod_{j=0}^\infty (1-a_j)>0$, i.e.\ that $u(x_0)<1$,
which is a contradiction. Thus $\{x_0\}$ is thick at $x_0$.
\end{proof}

The proof of the following lemma has been inspired by the proof of
Lemma~12.24 in Heinonen--Kilpel\"ainen--Martio~\cite{HeKiMa},
but here we make use
of the weak Cartan property to simplify the argument.

\begin{lem}  \label{lem-thin-Cp-to-0}
If a set $E$ is thin at $x_0$ then for every ball $B\ni x_0$
\[
\lim_{\rho\to0} \cp(E\cap B(x_0,\rho),B) =0.
\]
\end{lem}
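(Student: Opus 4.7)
The plan is to exploit the scale-freedom in the weak Cartan property (Theorem~\ref{thm-weak-Cartan}) so as to produce, for each small $\rho>0$, an admissible test function adapted to the scale $\rho$, rather than a fixed test function later cut off at scale $\rho$; the latter would introduce an unwieldy $\rho^{-p}$ cutoff term.

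First I would reduce to the case $x_0\notin E$. Indeed, if $x_0\in E$ and $E$ is thin at $x_0$, the remark following Theorem~\ref{thm-j} yields $\Cp(\{x_0\})=0$, so Lemma~\ref{lemma:capcomparison} gives $\cp(\{x_0\},B)=0$, and subadditivity of $\cp(\cdot,B)$ reduces the claim for $E$ to the same claim for $E\setm\{x_0\}$, which still satisfies the hypotheses with $x_0\notin E\setm\{x_0\}$. Assuming $x_0\notin E$, for each sufficiently small $\rho>0$ I invoke Theorem~\ref{thm-weak-Cartan} with its internal starting radius $r$ taken equal to $10\rho$; this is legitimate because thinness makes the Wiener-tail condition in Lemma~\ref{lem-wienersum} satisfied starting at every sufficiently small scale. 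The output is $v_\rho=\max\{u_\rho,u_\rho'\}\in\Np_0(B(x_0,10\rho))$ with $v_\rho\equiv 1$ on $E\cap B(x_0,\rho)$. For $10\rho$ below the radius of $B$, $v_\rho$ belongs to $\Np_0(B)$ and is admissible for $\cp(E\cap B(x_0,\rho),B)$, giving
\[
\cp(E\cap B(x_0,\rho),B)\le \int g_{v_\rho}^p\,d\mu\le 2^{p-1}\bigl(\cp(E_0^{\rho},B(x_0,10\rho))+\cp((E_0^{\rho})',B(x_0,2\rho))\bigr),
\]
where $E_0^{\rho}\subset E\cap B(x_0,5\rho)$ and $(E_0^{\rho})'\subset E\cap B(x_0,\rho)$ are the annular subsets of $E$ built in the proof of the weak Cartan property and the two capacities on the right equal the Dirichlet energies of $u_\rho$ and $u_\rho'$.

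The main obstacle is to show that both capacities on the right tend to zero as $\rho\to 0$. By subadditivity and monotonicity each is dominated by a tail of a single fixed series of the form $\sum_j \cp\bigl(E\cap B(x_0,\sigma^{-j}r_*),2B(x_0,\sigma^{-j}r_*)\bigr)$ over a geometric sequence of scales, and bridging from the weighted summability of $\bigl(\cp(E\cap B_j,2B_j)/\cp(B_j,2B_j)\bigr)^{1/(p-1)}$ furnished by Lemma~\ref{lem-wienersum} to vanishing of these bare tails is the heart of the matter. I would handle it by a case split on whether $\mu(B(x_0,\rho))/\rho^p$ is bounded above for small $\rho$: in the bounded case Lemma~\ref{lemma:capcomparison} controls the annular capacities $\cp(B_j,2B_j)$ uniformly, so the weighted summability already forces convergence of the bare series and vanishing of its tails; in the unbounded case $\Cp(\{x_0\})>0$, and the sharper characterization $\lim_{\rho\to0}\Cp(E\cap B(x_0,\rho))=0$ of thinness at points of positive capacity provided by Proposition~\ref{prop-char-thin-cap-pos}, combined with Lemma~\ref{lemma:capcomparison}, supplies $\cp(E\cap B(x_0,\rho),B)\to0$ directly without passing through the Wiener tails.
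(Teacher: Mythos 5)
Your setup is fine up to the point you yourself identify as the main obstacle, but the resolution of that obstacle does not work. First, the dichotomy is incomplete: the failure of ``$\mu(B(x_0,\rho))/\rho^p$ bounded above'' does \emph{not} imply $\Cp(\{x_0\})>0$. By the cited Proposition~8.3 in \cite{ringcap}, positivity of $\Cp(\{x_0\})$ requires convergence of $\int_0 (t^p/\mu(B(x_0,t)))^{1/(p-1)}\,dt/t$; a doubling measure with, say, $\mu(B(x_0,t))\approx t^p\log(1/t)$ has $\mu(B(x_0,t))/t^p\to\infty$ while (for $p\ge2$) $\Cp(\{x_0\})=0$. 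In that regime neither of your cases applies, and the difficulty is genuine: your chain of estimates only bounds $\cp(E\cap B(x_0,\rho),B)$ by capacities \emph{relative to the shrinking balls} $B(x_0,10\rho)$ and $B(x_0,2\rho)$, i.e.\ by $\cp(B(x_0,\rho),B(x_0,2\rho))$ times the Wiener ratio, and when $\cp(B_j,2B_j)\to\infty$ the mere summability of the ratios to the power $1/(p-1)$ does not force this product to zero (take ratios of order $j^{-(p-1)-\eps}$ with $p<2$). Second, and independently, your treatment of the positive-capacity case is circular: the implication ``thin $\Rightarrow \lim_{\rho\to0}\cp(E\cap B(x_0,\rho),B)=0$'' in Proposition~\ref{prop-char-thin-cap-pos} is obtained in the paper precisely as a special case of Lemma~\ref{lem-thin-Cp-to-0}, so you may not invoke it here.

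The paper's proof avoids the rescaling loss entirely by using the weak Cartan property at a \emph{single} scale together with quasicontinuity. After disposing of the cases $\cp(\{x_0\},B)=0$ (outer capacity) and $x_0\in E$ (Proposition~\ref{prop-pt-thick}), one takes the function $v=\max\{u,u'\}$ from Theorem~\ref{thm-weak-Cartan} on a fixed ball $B'$ with $2B'\subset B$; being in $\Np(B')$ it is quasicontinuous, so for any $\eps<\cp(\{x_0\},B)$ there is an open $G$ with $\cp(G,B)<\eps$ off which $v$ is continuous. Since $v(x_0)<1$ and $x_0\notin G$, continuity gives $v<1$ on $B(x_0,\rho)\setm G$ for some $\rho>0$; as $v=1$ on $E\cap B'$ this forces $E\cap B(x_0,\rho)\subset G$, hence $\cp(E\cap B(x_0,\rho),B)<\eps$. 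The key point you are missing is that the smallness of $G$ is measured with respect to the fixed ball $B$ from the start, so no comparison between capacities at different scales is ever needed. You would need to either adopt this quasicontinuity argument or supply an independent (non-circular) proof of the estimate you want in the unbounded-density case.
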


\begin{proof} Without loss of generality we may assume that
$\diam B < \frac{1}{6} \diam X$.
Since the variational capacity
is an outer capacity, by Theorem~\ref{thm-outercap-cp}, we
see that
\[
\cp(E\cap B(x_0,\rho),B) \le \cp(B(x_0,\rho),B) \to \cp(\{x_0\},B),
\quad \text{as } \rho\to 0,
\]
 and thus the result is trivial if $\cp(\{x_0\},B)=0$.
If $x_0\in E$ and $\cp(\{x_0\},B)>0$, then $\Cp(\{x_0\})>0$,
by Lemma~\ref{lemma:capcomparison}.
Proposition~\ref{prop-pt-thick} then
implies that $E$ is thick at $x_0$, a contradiction.
We can therefore assume that $x_0\notin E$ and
$\cp(\{x_0\},B)>0$.

Let $0<\eps<\cp(\{x_0\},B)$ be arbitrary.
By the weak Cartan property (Theorem~\ref{thm-weak-Cartan}), there
exist a ball $B'\subset 2B' \subset B$, containing $x_0$,
and $v\in\Np(B')$ such that $v(x_0)<1$ and
$v=1$ in $E\cap B'$.
Since $v \in \Np(B')$ it is quasicontinuous in $B'$, see the discussion after Definition \ref{def.PI.}.
Thus Lemma~\ref{lemma:capcomparison} shows that there is an open set $G\subset B'$
such that $\cp(G,B)<\eps$ and $v|_{B'\setm G}$ is continuous.
As $\eps<\cp(\{x_0\},B)$, we see that $x_0\notin G$ and $v|_{B'\setm G}$
is continuous at $x_0$.
Thus, there exists $\rho>0$ such that $B(x_0,\rho)\subset  B'$
and $v<1$ in $B(x_0,\rho)\setm G$.
Since $v=1$ in $E\cap B'$, we must have $E\cap B(x_0,\rho)\subset G$,
and hence
\begin{equation*}  
\cp(E\cap B(x_0,\rho),B) \le \cp(G,B) <\eps.\qedhere
\end{equation*}
\end{proof}

As a corollary of Lemma~\ref{lem-thin-Cp-to-0} we obtain the following strong
Cartan property at points of positive capacity, which also gives a new
characterization of thin sets at such points.

\begin{prop}  \label{prop-char-thin-cap-pos}
Assume that $\Cp(\{x_0\})>0$ and that $x_0\in\itoverline{E}\setm E$.
Then the following are equivalent.
\begin{enumerate}
\item \label{Cp-pos-thin}
$E$ is thin at $x_0$\textup{;}
\item \label{Cp-pos-to0}
for every {\rm(}some\/{\rm)} ball $B\ni x_0$ with $\Cp(X\setm B)>0$,
\[
\lim_{\rho\to0} \cp(E\cap B(x_0,\rho),B) =0\textup{;}
\]
\item \label{Cp-pos-superh}
for every {\rm(}some\/{\rm)} ball $B\ni x_0$ with $\Cp(X\setm B)>0$
there exists a nonnegative superharmonic function $u$ in $B$ such that
\[
\lim_{E\ni x\to x_0} u(x) =\infty > u(x_0).
\]
\end{enumerate}
\end{prop}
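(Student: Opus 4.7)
The plan is to close the cycle (a)\,$\Rightarrow$\,(b) for every\,$\Rightarrow$\,(b) for some\,$\Rightarrow$\,(c) for some\,$\Rightarrow$\,(a), and then to upgrade (a)\,$\Rightarrow$\,(c) from ``some'' to ``every'' by reapplying the same construction to an arbitrary ball. Three of these links are immediate: (a)\,$\Rightarrow$\,(b) for every is exactly Lemma~\ref{lem-thin-Cp-to-0}; the ``for every\,$\Rightarrow$\,for some'' implications are trivial; and (c) for some\,$\Rightarrow$\,(a) follows from Theorem~\ref{thm-j} applied with $k=1$ and $u_1=v=u$, since the hypothesis in (c) gives precisely $u(x_0)<\infty=\liminf_{E\ni x\to x_0}u(x)$. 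Everything therefore reduces to producing, under the hypothesis (b) for a fixed ball $B\ni x_0$ with $\Cp(X\setm B)>0$, a single nonnegative superharmonic function on $B$ with $u(x_0)<\infty$ and $u(x)\to\infty$ along $E$.

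For the construction, I would pick $\rho_k\downarrow 0$ with $\cp(F_k,B)\le 2^{-kp}$, where $F_k:=E\cap B(x_0,\rho_k)$, and let $u_k$ be the lower semicontinuously regularized capacitary potential of $F_k$ in $B$. Each $u_k$ is a nonnegative superharmonic function in $B$, bounded by $1$, with $u_k=1$ quasi-everywhere on $F_k$, and by the Poincar\'e inequality $\|u_k\|_{\Np(X)}\le C\,2^{-k}$. The essential use of the hypothesis $\Cp(\{x_0\})>0$ is a pointwise estimate at $x_0$: for any $t<u_k(x_0)$ the lower semicontinuity of $u_k$ makes $\{u_k>t\}$ an open set containing $x_0$, so $\Cp(\{x_0\})\le\Cp(\{u_k>t\})\le\|u_k\|_{\Np(X)}^p/t^p$, the second inequality because $u_k/t$ is admissible for $\Cp(\{u_k>t\})$. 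Letting $t\uparrow u_k(x_0)$ yields
\[
u_k(x_0)\le\frac{\|u_k\|_{\Np(X)}}{\Cp(\{x_0\})^{1/p}}\le\frac{C'\,2^{-k}}{\Cp(\{x_0\})^{1/p}},
\]
which is summable in $k$.

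Set $u:=\sum_{k} u_k$. Then $u(x_0)<\infty$, $u$ is nonnegative and lower semicontinuous, and, being an increasing limit of nonnegative superharmonic functions that is finite at $x_0$ (hence not identically $\infty$ in the component of $x_0$), $u$ is superharmonic on $B$ by the standard theorem on increasing limits. For $x\in E$ with $d(x,x_0)<\rho_n$ one has $x\in F_k$ for every $k\le n$, so the quasi-everywhere identity $u_k=1$ on $F_k$ gives $u(x)\ge n$ outside a capacity-zero exceptional set. To turn this quasi-everywhere bound into a genuine pointwise metric limit along $E$, I would use the fine continuity of each $u_k$ (Theorem~\ref{thm-superharm-p-fine}) together with the observation that $F_k$ is thick at every point of $E$ where $E$ itself is thick, and that any finely isolated point of $E$ must satisfy $\Cp(\{x\})=0$ by the contrapositive of Proposition~\ref{prop-pt-thick}; this produces the identity $u_k=1$ pointwise at such points of $E$ and yields $\lim_{E\ni x\to x_0}u(x)=\infty$.

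The hard part, and the only step whose proof goes beyond direct variational estimates, is precisely this final pointwise upgrade, because capacitary potentials only satisfy $u_k\ge 1$ quasi-everywhere and without fine continuity one could a priori have sequences in $E$ along which $u$ stays bounded. Once the construction is carried out for the $B$ given by (b), the cycle closes through Theorem~\ref{thm-j}, proving (a)\,$\Leftrightarrow$\,(b) for every\,$\Leftrightarrow$\,(b) for some\,$\Leftrightarrow$\,(c) for some; replaying it for any other admissible ball $B'$, with the corresponding $\rho_k$ supplied by Lemma~\ref{lem-thin-Cp-to-0}, then promotes (a)\,$\Rightarrow$\,(c) from ``some'' to ``every''.
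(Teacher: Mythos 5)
Your overall architecture (the implication cycle and the promotion from ``some'' to ``every'') matches the paper's, and several links are fine: (a)\,$\Rightarrow$\,(b) via Lemma~\ref{lem-thin-Cp-to-0}, (c)\,$\Rightarrow$\,(a) via fine continuity (or Theorem~\ref{thm-j} with $u_1=u_2=u$), and your quantitative bound $u_k(x_0)\le\|u_k\|_{\Np(X)}/\Cp(\{x_0\})^{1/p}$ is a correct use of the hypothesis $\Cp(\{x_0\})>0$. The genuine gap is in (b)\,$\Rightarrow$\,(c): you take capacitary potentials of the sets $F_k=E\cap B(x_0,\rho_k)$ themselves, and your proposed repair of the resulting quasi-everywhere defect does not work. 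Fine continuity forces $u_k(x)=1$ only at those $x\in F_k$ at which $F_k$ is \emph{thick}; at a point $x\in E$ at which $E$ is thin (necessarily $\Cp(\{x\})=0$, as you note via Proposition~\ref{prop-pt-thick}) the argument yields nothing, and $u_k(x)$ can genuinely be smaller than $1$ there. This is not a borderline defect: if $E$ is a countable set of points of zero capacity accumulating at a point $x_0$ of positive capacity (possible in weighted $\R^n$ and in metric spaces, as the paper points out), then $\Cp(F_k)=0$, every $u_k$ is identically zero, and your $u$ vanishes identically, so $\lim_{E\ni x\to x_0}u(x)=0$ rather than $\infty$.

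The missing ingredient is the outer capacity property of $\cp$ (Theorem~\ref{thm-outercap-cp}), which is exactly how the paper's proof proceeds: choose \emph{open} sets $G_j\supset E\cap B(x_0,r_j)$ with $x_0\notin G_j$ and $\cp(G_j,B)<2^{-jp}$, and let $v_j$ be the capacitary potential of $G_j$ in $B$. Because $G_j$ is open and $v_j$ is lower semicontinuously regularized, $v_j=1$ \emph{everywhere} on $G_j$, so $\sum_{j\le k}v_j\ge k$ on the open set $G_1\cap\cdots\cap G_k\supset E\cap B(x_0,r_k)$, with no exceptional set to remove; the limit along $E$ is then $\infty$ pointwise. (The paper also avoids your appeal to the increasing-limit theorem for superharmonic functions by taking $u$ to be the regularized solution of the obstacle problem with obstacle $v=\sum_j v_j\in\Np_0(B)$, and deduces $u(x_0)<\infty$ directly from $u\in\Np(X)$ and $\Cp(\{x_0\})>0$; either route is acceptable once the potentials are taken over open supersets.) With this one change your argument closes.
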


\begin{remark}
By letting $v:= \min\{u,u(x_0)+1\}$, we obtain a bounded superharmonic
function satisfying~\eqref{eq-Cartan}.
\end{remark}

\begin{proof}
\ref{Cp-pos-thin}  \imp \ref{Cp-pos-to0}
This is a special case of Lemma~\ref{lem-thin-Cp-to-0}.

\ref{Cp-pos-to0}  \imp \ref{Cp-pos-superh}
For $j=1,2,\ldots$, find $r_j>0$ such that
\[
\cp(E\cap B(x_0,r_j),B)<2^{-jp}.
\]
Since $\cp$ is an outer capacity, by Theorem~\ref{thm-outercap-cp},
there exist open sets $G_j\not\ni x_0$
such that $G_j\supset E\cap B(x_0,r_j)$ and $\cp(G_j,B)<2^{-jp}$.
Let $v_j$ be the capacitary potential of $G_j$ in $B$.
The Poincar\'e inequality for $\Np_0$ (also known as Friedrichs' inequality),
see Corollary~5.54 in Bj\"orn--Bj\"orn~\cite{BBbook},
shows that
\[
\int_B v_j^p\,d\mu \le C_B \int_B g_{v_j}^p\,d\mu < C_B 2^{-jp},
\]
and hence $\|v_j\|_{\Np(X)} \le \widetilde{C}_B 2^{-j}$.
It follows that
$v:=\sum_{j=1}^\infty v_j \in\Np_0(B)$.

Let $u$ be the lower semicontinuously
regularized solution of the $\K_{v,0}(B)$-obstacle problem.
Then $u\in\Np_0(B)$ is a nonnegative superharmonic function in $B$ and
(as $G_j$ are open) $u\ge k$ in $G_1\cap \ldots \cap G_k$, $k=1,2,\ldots$\,.
It follows that $\lim_{E\ni x\to x_0} u(x)=\infty$.
On the other hand, as $u\in\Np_0(B)$ and $\Cp(\{x_0\})>0$, we have
$u(x_0)<\infty$ by Definition~\ref{deff-sobcap}.

\ref{Cp-pos-superh}  \imp \ref{Cp-pos-thin}
Since superharmonic functions are finely continuous,
by Theorem~\ref{thm-superharm-p-fine},
the set $U=\{x \in B : u(x) < u(x_0)+1\}$ is finely open.
As $x_0 \in U$, we get that $B \setm U$ is thin at $x_0$,
and hence $E$ is also thin at $x_0$.
\end{proof}

Another consequence of Lemma~\ref{lem-thin-Cp-to-0} is the following
result, which is proved in the same way as the first part of
Lemma~2.138 in Mal\'y--Ziemer~\cite{MZ},
although we use the variational capacity instead of
the Sobolev capacity.
We include a short proof for the reader's convenience.

\begin{lem}   \label{lem-cap-quot-le-eps}
If $E$ is thin at $x_0$ and $\eps>0$, then there exists $\rho>0$ such
that
\[
\int_0^1 \biggl( \frac{\cp(E\cap B(x_0,\rho)\cap B(x_0,r),B(x_0,2r))}
{\cp(B(x_0,r),B(x_0,2r))} \biggr)^{1/(p-1)} \, \frac{dr}{r} < \eps.
\]
\end{lem}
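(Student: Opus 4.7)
The plan is to split the integral at a threshold radius $r_0\in(0,1)$ and treat the two resulting pieces by entirely different mechanisms. For the inner piece $\int_0^{r_0}$, one uses the set-monotonicity
\[
E\cap B(x_0,\rho)\cap B(x_0,r) \subset E\cap B(x_0,r),
\]
together with monotonicity of $\cp$ in its first argument, to dominate our integrand by the integrand in the defining thinness integral \eqref{deff-thin} for $E$ itself. Since $E$ is thin at $x_0$, that integral converges, so its tail $\int_0^{r_0}$ can be made smaller than $\eps/2$ by choosing $r_0$ sufficiently small. Crucially, this choice of $r_0$ is made independently of $\rho$.

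Having fixed such an $r_0$, the outer piece $\int_{r_0}^1$ is controlled by shrinking $\rho$. The key observation is the double monotonicity of $\cp$: for every $r\ge r_0$,
\[
\cp(E\cap B(x_0,\rho)\cap B(x_0,r),B(x_0,2r))
\le \cp(E\cap B(x_0,\rho),B(x_0,2r_0)),
\]
where we used that $B(x_0,2r)\supset B(x_0,2r_0)$ gives a larger admissible class for the variational capacity, hence a smaller value. By Lemma~\ref{lem-thin-Cp-to-0} the right-hand side tends to $0$ as $\rho\to 0$. The denominator $\cp(B(x_0,r),B(x_0,2r))$ is bounded below by a positive constant $c(r_0)$, uniformly for $r\in[r_0,\tfrac13\diam X)$, by Lemma~\ref{lemma:capcomparison}. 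Since $\int_{r_0}^1 dr/r=\log(1/r_0)$ is a fixed finite number, choosing $\rho$ small enough forces the outer piece below $\eps/2$ as well.

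The main technical obstacle is the behaviour of the denominator when $X$ is bounded and $r$ lies near $\diam X$: there $\cp(B(x_0,r),B(x_0,2r))$ may vanish, and by the stated convention the integrand is declared equal to $1$, contributing a fixed amount to the integral that is independent of $\rho$. This is handled by the same splitting argument applied at a second threshold $r_0^*<\tfrac13\diam X$: on $[r_0,r_0^*]$ the argument above applies verbatim, while the contribution from $[r_0^*,1]$ enters symmetrically into the defining thinness integral of $E$ (with the same convention) and is therefore already controlled by the first half of the argument when $r_0$ is chosen small enough. Putting the two estimates together yields the required bound by $\eps$.
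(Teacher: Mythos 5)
Your decomposition and the two monotonicity estimates at its core are correct, and on the range $r\in(0,r_0^*)$ with $r_0^*<\tfrac16\diam X$ your argument is a valid, more hands-on version of what the paper does. The paper's own proof sets $f_j(r)$ equal to the integrand with $\rho=1/j$, observes that $f_j\downarrow 0$ pointwise by Lemma~\ref{lem-thin-Cp-to-0}, that $f_1$ is dominated by the thinness integrand of $E$ and hence integrable, and concludes by dominated convergence. Your inner/outer splitting essentially unpacks that dominated convergence argument, with the extra feature that on $[r_0,r_0^*]$ you get \emph{uniform} smallness of the numerator from $\cp(E\cap B(x_0,\rho)\cap B(x_0,r),B(x_0,2r))\le\cp(E\cap B(x_0,\rho),B(x_0,2r_0))$ (valid once $\rho\le 2r_0$, so that the set sits inside the smaller ball) together with a uniform positive lower bound on the denominator from Lemma~\ref{lemma:capcomparison} (note that lemma requires $r<\tfrac16\diam X$, not $\tfrac13$).

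The genuine gap is in your final paragraph. When $X$ is bounded with $\diam X<6$ the interval $[r_0^*,1]$ is nonempty, and the claim that its contribution ``is already controlled by the first half of the argument when $r_0$ is chosen small enough'' does not follow: shrinking $r_0$ controls $\int_0^{r_0}$, whereas $\int_{r_0^*}^1$ of the thinness integrand of $E$ is a fixed finite quantity, independent of $r_0$ and $\rho$, and need not be small. On $[r_0^*,1]$ the denominator $\cp(B(x_0,r),B(x_0,2r))$ is not bounded away from zero (it vanishes where $X=B(x_0,2r)$), so your uniform estimate is unavailable there. The repair is the pointwise argument: for each fixed $r$ with positive denominator the integrand tends to $0$ as $\rho\to0$ by Lemma~\ref{lem-thin-Cp-to-0}, it is dominated by the integrable thinness integrand of $E$, and dominated convergence on $[r_0^*,1]$ finishes. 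On the exceptional set where the denominator vanishes, the numerator vanishes too (by monotonicity of $\cp$), so the quotient must be read as $0$ there; this reading is also implicit in the paper's proof, since under the literal convention ``integrand $=1$ when the denominator vanishes'' the statement itself would fail for small $\eps$ in such spaces, for every $E$. With these two points supplied, your proof is complete.
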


\begin{proof} Lemma~\ref{lem-thin-Cp-to-0} implies that the functions
\[
f_j(r) := \biggl( \frac{\cp(E\cap B(x_0,1/j)\cap B(x_0,r),B(x_0,2r))}
{\cp(B(x_0,r),B(x_0,2r))} \biggr)^{1/(p-1)} \frac1r
\]
decrease pointwise to zero on $(0,1)$.
As $E$ is thin at $x_0$, we see that $f_1$ is integrable on $(0,1)$,
and hence by dominated
convergence, $\int_0^1f_j(r)\,dr\to0$, as $j\to\infty$.
Choosing $\rho=1/j$ for some sufficiently large $j$ concludes the proof.
\end{proof}

Now we can deduce the following result which we will
need when proving Theorem~\ref{thm-char-fine-cont}.

\begin{lem}\label{lem-countable-thin}
Assume that the sets $E_j$, $j=1,2,\ldots$, are thin at $x_0$.
Then there exist radii $r_j>0$ such that the set
\[
E = \bigcup_{j=1}^\infty (E_j\cap B(x_0,r_j))
\]
is thin at $x_0$.
\end{lem}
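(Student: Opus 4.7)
The plan is to verify thinness of $E$ at $x_0$ via the integral condition \eqref{deff-thin}, by picking the radii $r_j$ using Lemma~\ref{lem-cap-quot-le-eps} and then transferring the thinness estimates from each piece to the union via countable subadditivity of $\cp$.

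First I apply Lemma~\ref{lem-cap-quot-le-eps} to each $E_j$ with $\eps=2^{-j}$ to choose $r_j>0$, taken decreasing to $0$, so that
\begin{equation*}
\int_0^1\biggl(\frac{\cp(E_j\cap B(x_0,r_j)\cap B(x_0,r),B(x_0,2r))}{\cp(B(x_0,r),B(x_0,2r))}\biggr)^{1/(p-1)}\,\frac{dr}{r}<2^{-j}.
\end{equation*}
Setting $F_j=E_j\cap B(x_0,r_j)$, we have $E=\bigcup_j F_j$, and the goal reduces to bounding the Wiener integral for $E$. Countable subadditivity of $\cp$ gives
\begin{equation*}
\cp(E\cap B(x_0,r),B(x_0,2r))\le\sum_{j=1}^\infty\cp(F_j\cap B(x_0,r),B(x_0,2r)),
\end{equation*}
and the summands on the right are precisely the integrands appearing in the choice of $r_j$ above.

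When $p\ge 2$ the exponent $1/(p-1)\le 1$ satisfies $(\sum a_j)^{1/(p-1)}\le \sum a_j^{1/(p-1)}$ for nonnegative $a_j$. Dividing the subadditivity inequality by $\cp(B(x_0,r),B(x_0,2r))$, raising to the power $1/(p-1)$, and swapping the sum and the integral via Fubini yields
\begin{equation*}
\int_0^1\biggl(\frac{\cp(E\cap B(x_0,r),B(x_0,2r))}{\cp(B(x_0,r),B(x_0,2r))}\biggr)^{1/(p-1)}\,\frac{dr}{r}\le\sum_{j=1}^\infty 2^{-j}=1<\infty,
\end{equation*}
so $E$ is thin at $x_0$ by \eqref{deff-thin}.

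The main obstacle is the range $1<p<2$, where $1/(p-1)>1$ and the termwise bound above is no longer available. There I would split the integration interval $(0,r_1)$ according to the $r_j$: on $[r_{j+1},r_j)$ one has $F_i\cap B(x_0,r)=E_i\cap B(x_0,r)$ for $i\le j$, while for $i>j$ the entire $F_i$ lies in $B(x_0,r_{j+1})\subset B(x_0,r)$. Subadditivity then separates the finite "non-nested" part, which I would estimate by the power-mean bound
\begin{equation*}
\biggl(\sum_{i=1}^j a_i\biggr)^{1/(p-1)}\le j^{(2-p)/(p-1)}\sum_{i=1}^j a_i^{1/(p-1)},
\end{equation*}
from the nested tail $\bigcup_{i>j}F_i\subset B(x_0,r_{j+1})$, whose variational capacity is controlled via Lemmas~\ref{lem-cap-with-s-t} and~\ref{lemma:capcomparison}. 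Choosing the $r_j$ to decrease faster than in the $p\ge 2$ case, fast enough to absorb both the polynomial factor $j^{(2-p)/(p-1)}$ in front of the non-nested part and the contribution from the nested tail, yields a summable estimate and hence the thinness of $E$ at $x_0$.
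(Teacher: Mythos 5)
Your choice of the radii via Lemma~\ref{lem-cap-quot-le-eps} with $\eps=2^{-j}$ and the reduction to countable subadditivity of $\cp(\cdot\,,B(x_0,2r))$ is the right skeleton, and the case $p\ge2$ is complete as you present it. The case $1<p<2$ is where the real content of the lemma lies, however, and your splitting scheme fails at the tail estimate. On $[r_{j+1},r_j)$ you bound the tail $\bigcup_{i>j}F_i$ by the full ball $B(x_0,r_{j+1})$. But for $r\in[r_{j+1},2r_{j+1}]$ one has $B(x_0,r_{j+1})\supset\tfrac12B(x_0,r)$, so Lemmas~\ref{lemma:capcomparison} and~\ref{lem-cap-with-s-t} give $\cp(B(x_0,r_{j+1}),B(x_0,2r))\ge c\,\cp(B(x_0,r),B(x_0,2r))$ with $c>0$ independent of $j$. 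Hence, once $r_{j+1}\le\tfrac12r_j$, the tail contributes at least $c^{1/(p-1)}\log2$ to the Wiener integral over each interval $[r_{j+1},r_j)$, and the resulting series diverges no matter how the $r_j$ are chosen; shrinking $r_{j+1}$ further only enlarges the range of $r$ on which this happens. (This is just the fact that a ball is never thin at its centre.) The factor $j^{(2-p)/(p-1)}$ creates a second, independent bookkeeping problem, since for a fixed $i$ the mass of the $i$-th integrand may sit in intervals $[r_{j+1},r_j)$ with arbitrarily large $j$, where that factor is unbounded.

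The repair --- and what the proof of Lemma~12.25 in Heinonen--Kilpel\"ainen--Martio, which the paper invokes, actually does --- is to avoid the splitting altogether and use Minkowski's inequality in $L^{q}((0,1),dr/r)$ with $q=1/(p-1)\ge1$. Apply Lemma~\ref{lem-cap-quot-le-eps} with $\eps=2^{-j/(p-1)}$ instead of $2^{-j}$, so that $\theta_j(r):=\cp(F_j\cap B(x_0,r),B(x_0,2r))/\cp(B(x_0,r),B(x_0,2r))$ satisfies $\|\theta_j\|_{L^q(dr/r)}\le2^{-j}$. Subadditivity gives the pointwise bound $\theta\le\sum_{j}\theta_j$ for the corresponding integrand $\theta$ of $E$, whence $\|\theta\|_{L^q(dr/r)}\le\sum_{j}\|\theta_j\|_{L^q(dr/r)}\le1<\infty$, i.e.\ $E$ is thin at $x_0$. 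This one-line argument covers all $1<p\le2$ and, together with your termwise bound for $p\ge2$, finishes the proof.
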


Note that in general the union $\bigcup_{j=1}^\infty E_j$ need not be thin
at $x_0$. This happens e.g.\ if $E_j=\bdry B(x_0,1/j)$.
To obtain a similar example where $x_0\in \itoverline{E}_j$, $j=1,2,\ldots$,
let $E_j = \bdry B(x_0,1/j) \cup E_0$, where $E_0$ is an arbitrary set thin
at $x_0$ and such that $x_0\in\itoverline{E}_0$.

\begin{proof} 
The proof of the corresponding result for weighted $\R^n$ in
Heinonen--Kilpel\"ainen--Martio~\cite{HeKiMa}, Lemma~12.25,
carries over verbatim to metric spaces.
However, instead of appealing to their Lemma~12.24 (i.e.\
our Lemma~\ref{lem-thin-Cp-to-0}), it is more straightforward
to appeal to our  Lemma~\ref{lem-cap-quot-le-eps}.
\end{proof}

We end this paper with the proof
of Theorem~\ref{thm-char-fine-cont}.

\begin{proof}[Proof of  Theorem~\ref{thm-char-fine-cont}]
\ref{it-i} $\imp$ \ref{it-iii}
For each $j=1,2,\dots$ there is
a finely open set  $U_j\ni x_0$
such that $|u(x)-u(x_0)|<1/j$ for every $x\in U_j$.
Since the sets $E_j:=X\setminus U_j$ are thin at $x_0$,
Lemma~\ref{lem-countable-thin} implies  that there are radii $r_j>0$
such that the set
\[
E = \bigcup_{j=1}^\infty (E_j\cap B(x_0,r_j))
\]
is thin at $x_0$.
It follows that $|u(x)-u(x_0)|<1/j$ for every
$x\in U\cap B(x_0,r_j)\setminus E$, and we conclude that~\ref{it-iii} holds.

The implication \ref{it-iii} $\imp$ \ref{it-ii} is immediate and
\ref{it-ii} $\imp$ \ref{it-i} follows from Lemma~\ref{lem-fine-intclosure}.
\end{proof}

\end{document}